\theoremstyle{plain}
\newtheorem{theorem}{Theorem}[section] 
\newtheorem{lemma}[theorem]{Lemma}
\newtheorem{proposition}[theorem]{Proposition}
\newtheorem{corollary}[theorem]{Corollary}
 \theoremstyle{definition}
\newtheorem{defn}{Definition}
\newtheorem{example}{Example} \theoremstyle{remark}
\newtheorem*{remark*}{Remark} \newtheorem{remark}{Remark}
\newcommand{\R}{\mathbb{R}}
\newcommand{\Rd}{{\R^{d}}}
\newcommand{\NN}{\mathbb{N}}
\newcommand{\ZZ}{\mathbb{Z}}
\newcommand{\ind}{\mathds{1}}
\renewcommand{\leq}{\leqslant}
\renewcommand{\geq}{\geqslant}
\newcommand{\GZL}[1]{G^{Z,{#1}}}
\newcommand{\GZLN}[2]{G^{Z,{#1},{#2}}}
\newcommand{\GZLn}[1]{\GZLN{{#1}}{n}}
\def \GZln{\GZLn{\lambda}}
\def \KK{\mathbb{K}}
\def \pK{\mathcal{K}}
\def \PP{\mathbb{P}}
\def \EE{\mathbb{E}}
\def \Hyp{(H0)}
\def\({\left(}
\def\){\right)}
\def\[{\left[}
\def\]{\right]}
\def\<{\langle}
\def\>{\rangle}
\newcommand{\WUSC}[3]{\textrm{WUSC}(#1,#2,#3)}
\newcommand{\WLSC}[3]{\textrm{WLSC}(#1,#2,#3)}
\newcommand{\LL}{{\eta}}
\newcommand{\lC}{{\underline{c}}}
\newcommand{\uC}{{\overline{C}}}
\newcommand{\la}{{\underline{\alpha}}}
\newcommand{\ua}{{\overline{\alpha}}}
\newcommand{\lt}{{\underline{\theta}}}
\newcommand{\ut}{{\overline{\theta}}}
\definecolor{ks}{rgb}{0.7,0.1,0.2}
\definecolor{ksg}{rgb}{0.1,0.4,0.1}
\title{Kato classes for L\'{e}vy processes\thanks{\emph{2010 MSC}: Primary: 60G51; 60J45 Secondary: 47A55; 35J10 . \emph{Keywords:} Kato class, L\'{e}vy process,  L\'evy-Khintchine exponent, Schr{\"o}dinger perturbation, unimodal isotropic L\'evy process, subordinator, polarity of a one point set}}
\author{Tomasz Grzywny and Karol Szczypkowski \thanks{Authors' affiliations and emails: 
  Wroc\l{}aw University of
Science and Technology, 
Wyb. Wyspia\'{n}skiego
27, 50-370 Wroc\l{}aw, Poland, 
Universit{\"a}t Bielefeld, Postfach 10 01 31, D-33501 Bielefeld, Germany
tomasz.grzywny@pwr.edu.pl, karol.szczypkowski@math.uni-bielefeld.de, karol.szczypkowski@pwr.edu.pl.
}}
\begin{document}
\maketitle
\begin{abstract}
We prove that the definitions of the Kato class through the semigroup and through the resolvent of the L{\'e}vy process 
in
$\Rd$ coincide
if and only if $0$ is not regular for $\{0\}$. If 0 is regular for $\{0\}$ then we describe both classes in detail. We also give an analytic reformulation of these results by means of the characteristic (L{\'e}vy-Khintchine) exponent of the process. The result applies to the time-dependent (non-autonomous) Kato class. As one of the consequences we obtain a simultaneous time-space smallness condition equivalent to the Kato class condition given by the semigroup.
\end{abstract}

\section{Introduction}

The Kato class plays an important role in the theory of stochastic processes
and in the theory of pseudo-differential operators
that emerge as generators of stochastic processes.
The definition of the Kato class may differ according to the underlying probabilistic or analytical problem.
In~the first case the primary
definition of the Kato condition is
\begin{align}\label{start}
\lim_{t\to 0^+} \left[\sup_{x} \EE^x \left( \int_0^t |q(X_u)|\,du\right)\right]=0\,.
\end{align}
Here $q$ is a Borel function on the state space of the process $X=(X_t)_{t\geq 0}$.
As shown in \cite[section 3.2]{MR1329992} through the Khas'minskii Lemma the condition yields sufficient local regularity of the corresponding Schr{\"o}dinger (Feynman-Kac) semigroup 
$$
\widetilde{P}_tf(x)=\EE^x \left[\exp \left(-\int_0^t q(X_u)\,du\right)f(X_t)\right].
$$ 
In particular, the existence of a density, strong continuity or strong Feller property are inherited under \eqref{start} from 
properties of the
original semigroup $P_tf(x)=\EE^x f(X_t)$ (for details and further results see \cite[Theorems~3.10--3.12]{MR1329992}).
Moreover, if we denote by $L$ the generator of $(P_t)_{t\geq 0}$, 
we expect the semigroup $(\widetilde{P}_t)_{t\geq 0}$ to correspond 
to $L-q$ and to allow for the analysis of the Schr{\"o}dinger operator $H=-L+q$ (\cite{MR1772266}). A 
fact that  the Schr{\"o}dinger operator is essentially self-adjoint and has   bounded and continuous eigenfunctions is another consequence of \eqref{start}, see~\cite{MR1054115}, \cite{MR670130} and \cite{KL}.
Applications of \eqref{start} to quadratic forms of Schr{\"o}dinger operators   are also known and we describe them shortly
after Proposition~\ref{gen_KK_equiv}.

The condition \eqref{start} can be understood as a smallness condition with respect to time. The alternative definition of the Kato condition is given by the following space smallness,
\begin{align}\label{starteqiv}
\lim_{r\to 0^+}\left[ \sup_{x}\EE^x \left( \int_0^{\infty} e^{-\lambda u} \ind_{B(x,r)}(X_u)|q(X_u)|\,du\right)\right]=0\,,
\end{align}
for some $\lambda>0$ (equivalently for every $\lambda>0$; see Lemma~\ref{lem:consistency}).

In this paper we
obtain a precise description of the equivalence of \eqref{start} and \eqref{starteqiv} for L{\'e}vy processes in $\Rd$,  $d\in\mathbb{N}$.
In order to formulate the result we recall that a point $x\in\Rd$ is said to be {\it regular} for a Borel set $B\subseteq\Rd$ if 
$$\PP^x(T_B=0)=1\,,$$
where 
$T_B=\inf \{t>0: X_t\in B\}$ is the first hitting
time of $B$.
\begin{theorem}\label{thm:Gen_1}
Let $X$ be a L\'{e}vy process in $\Rd$. The conditions \eqref{start} and \eqref{starteqiv} are NOT equivalent if~and~only~if $0$ is 
regular for $\{0\}$.
\end{theorem}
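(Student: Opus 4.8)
\emph{Strategy.} By spatial homogeneity I would first rewrite both conditions through the $\lambda$-potential measure $U^\lambda(dy)=\int_0^\infty e^{-\lambda u}p_u(dy)\,du$, where $p_u$ denotes the law of $X_u-X_0$: condition \eqref{start} becomes $\lim_{t\to0^+}\sup_x\int_0^t P_u|q|(x)\,du=0$, while \eqref{starteqiv} becomes $\lim_{r\to0^+}\sup_x\int_{B(0,r)}|q(x+y)|\,U^\lambda(dy)=0$, and by Lemma~\ref{lem:consistency} the latter is independent of $\lambda>0$. I would then prove the theorem in the equivalent form: \eqref{starteqiv} always implies \eqref{start}, whereas \eqref{start} implies \eqref{starteqiv} if and only if $0$ is \emph{irregular} for $\{0\}$; non-equivalence then holds exactly in the regular case.

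\emph{The implication \eqref{starteqiv}$\Rightarrow$\eqref{start}.} I expect this to hold for every L\'evy process. First a Khas'minskii-type covering argument upgrades \eqref{starteqiv} to the global bound $\sup_x U^\lambda|q|(x)<\infty$, which in particular makes $\sup_x\int_0^t P_u|q|(x)\,du$ finite. Next I split the time integral according to whether the path has already left $B(x,r)$. Writing $\tau_r$ for the first exit time of $X$ from $B(0,r)$ and applying the strong Markov property at $\tau_r$ together with spatial homogeneity, I would obtain
\[
\sup_x\int_0^t P_u|q|(x)\,du\leq e^{\lambda t}\sup_x\int_{B(0,r)}|q(x+y)|\,U^\lambda(dy)+\PP^0(\tau_r\leq t)\,\sup_x\int_0^t P_u|q|(x)\,du .
\]
Since $\tau_r>0$ almost surely, $\PP^0(\tau_r\leq t)\to0$ as $t\to0^+$ for each fixed $r$; choosing $r$ small to make the potential term small and then $t$ small so that $\PP^0(\tau_r\leq t)\leq\tfrac12$, the last term is absorbed and \eqref{start} follows.

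\emph{Equivalence when $0$ is irregular.} Assuming \eqref{start}, I would estimate \eqref{starteqiv} by cutting the time integral at a level $t$. The early part $\int_0^t$ is dominated by $\sup_x\int_0^t P_u|q|(x)\,du$, hence small uniformly in $r$. For the late part $\int_t^\infty$, the strong Markov property at time $t$ expresses it through the convolution $p_t*U^\lambda$ evaluated on $B(0,r)$, so the whole matter reduces to the discounted occupation of the shrinking ball. The decisive ingredient is a renewal estimate: the resolvent occupation $U^\lambda(B(0,r))$ exceeds the single-passage occupation $\EE^0\int_0^{\tau_r}e^{-\lambda u}\,du$ by the expected number of returns of $X$ to $B(0,r)$, and irregularity of $0$ for $\{0\}$ is precisely the statement that this return factor stays bounded as $r\to0^+$. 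Under such boundedness the control that \eqref{start} exerts on the single-passage occupation transfers to the full discounted occupation, giving \eqref{starteqiv}. I expect this renewal comparison to be the technical heart of the argument.

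\emph{Non-equivalence when $0$ is regular.} Here I would produce $q$ satisfying \eqref{start} but not \eqref{starteqiv}. If $U^\lambda(\{0\})>0$ (the compound-Poisson-type case) the choice $q\equiv1$ already separates them, since $\int_0^t P_u 1(x)\,du=t\to0$ whereas $\sup_x\int_{B(0,r)}U^\lambda(dy)\geq U^\lambda(\{0\})>0$. The harder case is $U^\lambda(\{0\})=0$ with $0$ still regular (e.g. one-dimensional Brownian motion). There I would take $q=\sum_n c_n\ind_{I_n}$ with balls $I_n$ shrinking and widely separated and $\int_{I_n}|q|$ bounded below, so that \eqref{starteqiv} fails because the potential mass of small balls is bounded below; regularity, read as blow-up of the return factor above, forces the short-time occupation of each $I_n$ to be asymptotically negligible against its resolvent occupation, which is exactly what keeps \eqref{start} valid. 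The main obstacle, in my view, is to derive the dichotomy ``regular $\Leftrightarrow$ unbounded return factor'' directly from the hitting-time definition of regularity, and to make the separating construction uniform over all regular processes in the delicate case $U^\lambda(\{0\})=0$.
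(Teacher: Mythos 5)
Your first step --- that \eqref{starteqiv} implies \eqref{start} for every L\'evy process --- is sound, and it is essentially the paper's own route: your exit-time absorption is Zhao's implication \eqref{C1}$\Rightarrow$\eqref{C3}$\Rightarrow$\eqref{C2} under \eqref{H2}, and your covering bound is Lemma~\ref{sup_R2}. The proposal fails, however, at what you yourself call its technical heart: the claimed dichotomy ``$0$ irregular for $\{0\}$ $\Longleftrightarrow$ the return factor $U^{\lambda}(B(0,r))\big/\EE^0\!\int_0^{\tau_{B(0,r)}}e^{-\lambda u}du$ stays bounded as $r\to0^+$'' is false. Take the symmetric Cauchy process in $\R$, $\psi(\xi)=|\xi|$: here $\{0\}$ is polar (Kesten--Bretagnolle criterion), so $0$ is certainly irregular; but the $\lambda$-potential density blows up like $\pi^{-1}\ln(1/|z|)$ at the origin, whence $U^{\lambda}(B(0,r))=G^{\lambda}(B(0,r))\asymp r\ln(1/r)$, while $\EE^0\!\int_0^{\tau_{B(0,r)}}e^{-\lambda u}du\asymp\EE^0\tau_{B(0,r)}\asymp r$, so the return factor diverges like $\ln(1/r)$. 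Your transfer argument then yields only a bound of order $\ln(1/r)\,\sup_y\EE^y\!\int_0^{cr}|q(X_u)|\,du$, which \eqref{start} does not force to vanish --- yet the theorem asserts equivalence precisely for such processes. What actually drives the polar case is polarity, not irregularity: hitting probabilities of $B(x,r)$ from a fixed macroscopic distance vanish as $r\to 0^+$ (this is \eqref{H3}; Corollary~\ref{lem:H3}), so the renewal decomposition must count returns from a macroscopic distance, each visit being the entire sojourn in a fixed ball, controlled through the exit-time condition \eqref{C3}. Moreover, irregularity does not reduce to polarity: in the remaining irregular case --- finite variation with non-zero drift, cases (B)/(B') --- $\{0\}$ is \emph{not} polar, even the macroscopic return mechanism is uncontrolled (there $\sup_{x\neq0}h^{\lambda}(x)=1$), and the paper needs a third, genuinely different argument (Theorems~\ref{bouVar1} and~\ref{prop:WH0_bouVar1}), identifying both classes with $\{q\colon\lim_{r\to0^+}\sup_x\int_{B(x,r)}|q(z)|\,dz=0\}$ via the a.s.\ drift behaviour $X_s/s\to\gamma_0$. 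A single ball-scale renewal estimate cannot see this trichotomy.

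In the non-equivalence direction, your compound Poisson separation by $q\equiv1$ is correct (it is Proposition~\ref{calInKato}), and your candidate $q=\sum_n c_n\ind_{I_n}$ has the right shape (the paper's $\sum_k 2^k\ind_{(k,k+2^{-k})}$); the failure of \eqref{starteqiv} for it does follow because $G^{\lambda}\geq\varepsilon>0$ near $0$ under regularity (Lemma~\ref{lem:Bret}). But your reason why \eqref{start} nevertheless holds for such $q$ is again the false return-factor dichotomy, so the separation is not established. The paper proves the stronger statement that under regularity $\KK(X)$ equals all of $(L^1_{loc})_{uni}$, and the indispensable quantitative input is the supermultiplicativity $h^{\lambda}(x+y)\geq h^{\lambda}(x)h^{\lambda}(y)$, which gives the Harnack-type bound \eqref{WeakHarnack} and the estimate $G^{\lambda}_t(x)\leq G^{\lambda}(x)\left(\lambda t+\|P_tf-f\|_{\infty}\right)$ with $f=h^{\lambda}(-\cdot)\in C_0(\R)$ (Lemma~\ref{lem:reg0}), followed by the block summation \eqref{ineq:0reg}; nothing playing this role appears in your sketch. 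Finally, you do not address the degenerate multidimensional regular cases ($d>1$ without \Hyp{}), where $X$ is a one-dimensional regular process plus independent compound Poisson jumps as in \eqref{eq:descr_X}; there both descriptions and the counterexample must be run along the line $V$ (Theorem~\ref{thm:WH0_0reg}).
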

\noindent
Complete and direct descriptions of \eqref{start} and \eqref{starteqiv} in the case of the {\it  compound Poisson process} are given in Proposition~\ref{calInKato}.
When $X$ is not a compound Poisson process and {\it $0$ is regular for $\{0\}$}  we fully describe \eqref{start} and \eqref{starteqiv} in Theorem~\ref{thm:0regH0} and~\ref{thm:WH0_0reg}.
To 
move
right away
to Section~\ref{sec:thm} we 
recommend
to read Definition~\ref{def:Kato} and Section~\ref{sec:scheme} first.
In Section~\ref{sec:scheme}
the reader will also find
analytic characterization of 
the situation when
$0$ is regular for $\{0\}$.

In \cite[Theorem~III.1]{MR1054115}
Carmona, Masters and Simon
declare
that
\eqref{start} 
can be 
expressed
by \eqref{starteqiv}
under additional
assumptions
on the transition density of the L{\'e}vy process.
However, the general 
equivalence
of (i) 
and 
(iii) from \cite[Theorem III.1]{MR1054115} that is claimed therein
does not hold. 
As we show in Theorem~\ref{thm:0regH0} it fails for the Brownian motion in $\R$ and for those one-dimensional unimodal L{\'e}vy processes for which $\{0\}$ is not polar. 
Recall that
a Borel set $B\subseteq \Rd$ is called {\it polar} if 
$$\PP^x (T_B=\infty)=1\qquad \mbox{for all}\quad x\in\Rd\,.$$
For example the function  $q(x)=\sum_{k=1}^{\infty} 2^{k} \ind_{(k,k+2^{-k})}(x)$ satisfies (i), but fails to satisfy (iii) in \cite[Theorem III.1]{MR1054115} for such processes.
The paper \cite{MR1054115} was very influential and the mistake reappears in the literature. For~instance (1) and (3) of \cite[Proposition 4.5]{MR2944475} are {\it not} equivalent in general.

The special character of the one-dimensional case can also be seen in \cite[Remark 3.1]{MR2345907}. In \cite[Definition 3.1 and 3.2]{MR2345907} the authors discuss the Kato class of measures for symmetric Markov processes admitting upper and lower estimates of transition density with additional integrability assumptions, see \cite[Theorem 3.2]{MR2345907}.

Theorem \ref{thm:Gen_1} allows also for results on the time-dependent Kato class for L{\'e}vy processes~in~$\Rd$. Such a class is used for instance in \cite{MR1360232}, \cite{MR1488344}, \cite{MR2457489}, \cite{MR3200161}, \cite{BBS}. See \cite{MR1687500} for a wider discussion of the Brownian motion case, c.f. \cite[Theorem 2]{MR1687500}.
 
\begin{corollary}\label{cor:time}
Let $X$ be a L{\'e}vy process in $\Rd$. For $q\colon \R\times\Rd \to \R$ we have
\begin{align}\label{start_time}
\lim_{t\to 0^+} \left[\sup_{s,x} \EE^x \left( \int_0^t |q(s+u,X_u)|\,du\right)\right]=0\,,
\end{align}
if and only if
\begin{align}\label{start_timeeqiv}
\lim_{r\to 0^+}\left[ \sup_{s,x}\EE^x \left( \int_0^r  \ind_{B(x,r)}(X_u)|q(s+u,X_u)|\,du\right)\right]=0\,.
\end{align}
\end{corollary}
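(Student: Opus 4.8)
The plan is to prove the two implications of Corollary~\ref{cor:time} separately, abbreviating $M(t)=\sup_{s,x}\EE^x\big(\int_0^t|q(s+u,X_u)|\,du\big)$ and $N(r)=\sup_{s,x}\EE^x\big(\int_0^r\ind_{B(x,r)}(X_u)|q(s+u,X_u)|\,du\big)$, so that \eqref{start_time} reads $\lim_{t\to0^+}M(t)=0$ and \eqref{start_timeeqiv} reads $\lim_{r\to0^+}N(r)=0$. The implication \eqref{start_time}$\Rightarrow$\eqref{start_timeeqiv} is immediate: inserting the indicator only shrinks the integrand, so $N(r)\le M(r)$, and since $M(r)\to0$ as $r\to0^+$ we get $N(r)\to0$.

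For the converse I would run a chaining argument along exit times from balls. Fix a radius $\rho>0$ and a horizon $t>0$, set $\tau_0=0$ and $\tau_{k+1}=\inf\{u>\tau_k:|X_u-X_{\tau_k}|\ge\rho\}$, and decompose $\int_0^t=\sum_{k}\int_{\tau_k\wedge t}^{\tau_{k+1}\wedge t}$. On $[\tau_k,\tau_{k+1})$ the process stays in $B(X_{\tau_k},\rho)$, so on the event $\{\tau_k<t\}$ the strong Markov property at $\tau_k$ together with spatial homogeneity identifies the conditional expectation of the $k$-th block with $\EE^{X_{\tau_k}}\big(\int_0^{\sigma\wedge(t-\tau_k)}|q(s+\tau_k+v,X_v)|\,dv\big)$, where $\sigma$ is the exit time of the fresh process from the ball of radius $\rho$ about its start. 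On this block the time coordinate of $q$ is shifted by the random amount $s+\tau_k$; because $N(\rho)$ is a supremum over \emph{all} time shifts $s$, this random shift is harmless, and provided $t\le\rho$ the running time stays below $\rho$ while the process stays in the ball, so the block is bounded by $N(\rho)$. This absorption of the random restart time by the supremum over $s$ is precisely why the time-dependent statement holds unconditionally, in contrast to the autonomous dichotomy of Theorem~\ref{thm:Gen_1}.

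Summing the blocks gives $M(t)\le N(\rho)\,\EE^x\big(\#\{k:\tau_k<t\}\big)$, so the main obstacle is to bound the expected number of ball-exits before time $t$ uniformly. Here I would exploit the renewal structure: by the strong Markov property and homogeneity the increments $\tau_{k+1}-\tau_k$ are i.i.d.\ copies of the exit time $T_\rho$ from $B(0,\rho)$, and being nonnegative they satisfy $\{\tau_k<t\}\subseteq\bigcap_{j=1}^k\{\tau_j-\tau_{j-1}<t\}$, whence $\PP^x(\tau_k<t)\le\PP(T_\rho<t)^k$ and $\EE^x\big(\#\{k:\tau_k<t\}\big)\le 1/\PP(T_\rho\ge t)$. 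The delicate point is that one must \emph{not} set $t=\rho$: when $0$ is regular for $\{0\}$ (e.g.\ Brownian motion, whose exit time from $B(0,\rho)$ is of order $\rho^2$) the process leaves far too many balls of radius $\rho$ in time $\rho$, and this count blows up. The remedy is to decouple the two scales, and I expect this to be the crux of the whole argument.

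Concretely, given $\varepsilon>0$ I would first choose $\rho$ with $N(\rho)<\varepsilon/2$, and then, using $T_\rho>0$ almost surely (right-continuity of the paths from $X_0=0$, so that $\PP(T_\rho\ge t)\to1$ as $t\to0^+$), select $t=t(\rho)\le\rho$ small enough that $\PP(T_\rho\ge t)\ge1/2$. The two displays above then give $M(t)\le 2N(\rho)<\varepsilon$, and since $M$ is nondecreasing this bounds $M(t')$ for every $t'\le t$; letting $\varepsilon\to0$ yields $\lim_{t'\to0^+}M(t')=0$, which is \eqref{start_time}. Once the scale-decoupling is in place, the strong Markov splitting and the renewal estimate are routine, so the argument is entirely self-contained and does not appeal to the description of the classes in the regular case.
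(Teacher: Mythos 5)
Your proof is correct, but it takes a genuinely different route from the paper's. The paper deduces the corollary in a few lines from its main result: it forms the space-time L\'evy process $Y_t=(t,X_t)$ in $\R^{d+1}$, observes that the deterministic drift in the time coordinate makes $0$ not regular for $\{0\}$ for $Y$, invokes Theorem~\ref{thm:Gen_1} to get the equivalence of \eqref{start} and \eqref{starteqiv} for $Y$, and then translates \eqref{starteqiv} for $Y$ into \eqref{start_timeeqiv} by replacing the ball in $\R^{d+1}$ with a product of a time interval and a spatial ball and using that $e^{-\lambda u}$ is comparable with $1$ for $u\in[0,r)$. You instead prove the nontrivial implication \eqref{start_timeeqiv}$\Rightarrow$\eqref{start_time} directly, and all the steps check out: the strong Markov splitting at the exit times $\tau_k$, the absorption of the random time shift $s+\tau_k$ by the supremum over $s$, the renewal bound $\EE^x\big(\#\{k:\tau_k<t\}\big)\leq 1/\PP(T_\rho\geq t)$ (this is where the L\'evy property enters, via the i.i.d.\ increments $\tau_{k+1}-\tau_k$), and the decoupling of scales, choosing $t\leq\rho$ so small that $\PP(T_\rho\geq t)\geq 1/2$, which is legitimate since $T_\rho>0$ a.s.\ by right-continuity of paths. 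As for what each approach buys: the paper's proof is short but rests on the full strength of Theorem~\ref{thm:Gen_1}, hence on the entire classification machinery of Sections 2--4 (Bretagnolle's results, the hypothesis \Hyp{}, the six theorems of Section~\ref{sec:thm}); yours is elementary and completely self-contained --- in effect a quantitative, time-truncated version of Zhao's implication \eqref{C1}$\Rightarrow$\eqref{C2} --- and it makes transparent the mechanism behind the unconditional equivalence: in a window of length $t\ll\rho$ the process can complete only boundedly many excursions across balls of radius $\rho$, whereas the undiscounted count of \emph{all} returns to a small ball, which is what \eqref{starteqiv} records, is exactly where regularity of $0$ for $\{0\}$ becomes an obstruction. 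One cosmetic remark: the blow-up of the exit count at the single scale $t=\rho$ that you attribute to regularity of $0$ for $\{0\}$ in fact occurs for Brownian motion in every dimension (exit times are of order $\rho^2$ regardless of $d$), so it is an obstruction to the naive single-scale argument rather than a marker of the regular case; this does not affect the validity of your proof.
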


\noindent
See Section~\ref{sec:thm} for the proof.
If one uses Corollary \ref{cor:time} for time-independent $q$, i.e., let $q\colon \Rd\to\R$ and put $q(u,z)=q(z)$, then the quantity in \eqref{start_time} coincides with \eqref{start} and we obtain the following reinforcement of \eqref{start} to a time-space smallness condition.
\begin{theorem}
Let $X$ be a L{\'e}vy process in $\Rd$. Then \eqref{start} holds if and only if
\begin{align}\label{start_strong}
\lim_{r\to 0^+}\left[ \sup_{x}\EE^x \left( \int_0^r  \ind_{B(x,r)}(X_u)|q(X_u)|\,du\right)\right]=0\,.
\end{align}
\end{theorem}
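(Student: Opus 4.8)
The plan is to read off this theorem as the time-independent specialization of Corollary~\ref{cor:time}, exactly as indicated in the paragraph preceding the statement. Given the Borel function $q\colon\Rd\to\R$, I would define $\tilde q\colon\R\times\Rd\to\R$ by $\tilde q(s,z):=q(z)$, a Borel function on $\R\times\Rd$ that does not depend on its first argument, and apply Corollary~\ref{cor:time} to $\tilde q$.

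The key step is then the elementary observation that, because $\tilde q$ is independent of $s$, the suprema over $s$ in \eqref{start_time} and \eqref{start_timeeqiv} are redundant. For every $s\in\R$ one has
\begin{equation*}
\EE^x\left(\int_0^t |\tilde q(s+u,X_u)|\,du\right)=\EE^x\left(\int_0^t |q(X_u)|\,du\right),
\end{equation*}
so that the quantity in \eqref{start_time} coincides with the one in \eqref{start}; likewise
\begin{equation*}
\EE^x\left(\int_0^r \ind_{B(x,r)}(X_u)|\tilde q(s+u,X_u)|\,du\right)=\EE^x\left(\int_0^r \ind_{B(x,r)}(X_u)|q(X_u)|\,du\right),
\end{equation*}
so the quantity in \eqref{start_timeeqiv} coincides with the one in \eqref{start_strong}. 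Hence the equivalence of \eqref{start_time} and \eqref{start_timeeqiv} furnished by Corollary~\ref{cor:time} is precisely the asserted equivalence of \eqref{start} and \eqref{start_strong}.

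There is essentially no obstacle in this derivation itself; all of the substance is carried by Corollary~\ref{cor:time}. If instead one sought a self-contained argument not routed through the time-dependent corollary, the two implications would be of very different difficulty. The forward implication \eqref{start}$\Rightarrow$\eqref{start_strong} is immediate from monotonicity: since $\ind_{B(x,r)}\leq 1$ and the integration window is only $[0,r]$, we have $\EE^x\int_0^r \ind_{B(x,r)}(X_u)|q(X_u)|\,du\leq \EE^x\int_0^r |q(X_u)|\,du$, and taking $t=r$ in \eqref{start} forces the left-hand side to vanish uniformly in $x$. The genuine difficulty lies in the converse \eqref{start_strong}$\Rightarrow$\eqref{start}, where one must recover pure time smallness from the \emph{simultaneous} time-and-space smallness by splitting $\int_0^t|q(X_u)|\,du$ according to whether $X_u$ lies in $B(x,r)$ or not and controlling the contribution on which the process has already travelled a distance at least $r$ from its starting point. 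That this always succeeds --- in contrast to the resolvent-type condition \eqref{starteqiv}, whose equivalence with \eqref{start} may fail when $0$ is regular for $\{0\}$ --- is the gain afforded by the shrinking time window $[0,r]$, and it is this that Corollary~\ref{cor:time} has already established.
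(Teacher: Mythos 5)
Your proposal is correct and is exactly the paper's own argument: the paper proves this theorem by applying Corollary~\ref{cor:time} to the time-independent potential $q(u,z)=q(z)$, observing (as you do) that the suprema over $s$ become redundant so that \eqref{start_time} reduces to \eqref{start} and \eqref{start_timeeqiv} reduces to \eqref{start_strong}. Your closing remarks on which implication would be hard in a self-contained treatment are accurate but not needed, since all the substance indeed resides in Corollary~\ref{cor:time} (proved in the paper via Theorem~\ref{thm:Gen_1} for the space-time process $Y_t=(t,X_t)$, for which $0$ is not regular for $\{0\}$).
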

\noindent
In view of the equivalence of \eqref{start} and \eqref{start_strong} for every L{\'e}vy process (see Proposition \ref{gen_KK_equiv} for other description of \eqref{start} true for Hunt processes) these conditions should be compared with \eqref{starteqiv} by its alternative form provided by  Proposition~\ref{pK_alternat} in a generality of a Hunt process, i.e., 
\begin{align}\label{starteqiv_alt}
\lim_{r\to 0^+}\left[ \sup_{x}\EE^x \left( \int_0^t \ind_{B(x,r)}(X_u)|q(X_u)|\,du\right)\right]=0\,,
\end{align}
for some (every) fixed $t>0$. The closeness or possible differences between \eqref{start} and \eqref{starteqiv} are now more evident for L{\'e}vy processes through \eqref{start_strong} and \eqref{starteqiv_alt}.

The variety of conditions we point out is due to possible applications where one can choose a suitable version according to the knowledge about the process and derive a clear {\it analytic description} of the Kato condition \eqref{start}.
See also Theorem~\ref{prop:G_to_zero} and Theorem~\ref{prop:G_unb_b} for other conditions.
For~instance, in  Example~\ref{example:1} we apply Theorem~\ref{thm:Gen_1} and we make use of \eqref{starteqiv_alt}. On the other hand, by Theorem~\ref{thm:Gen_1} and \eqref{starteqiv} we obtain that for a large class of subordinators \eqref{start} is equivalent to
\begin{align}\label{eq:sub_Kato}
\lim_{r\to 0^+}\sup_{x\in\R}\int_0^{r} |q(z+x)| \frac{\phi'(z^{-1})}{z^2\phi^2(z^{-1})}\,dz=0\,,
\end{align}
where $\phi$ is the Laplace exponent of the subordinator. See Section \ref{sec:sub} for details. This is also usual that from \eqref{starteqiv} and \eqref{starteqiv_alt} one learns,
like through \eqref{eq:sub_Kato},
about acceptable singularities of $q$.
Schr{\"o}dinger perturbations of subordinators 
are interesting since they exhibit peculiar properties
that indicate complexity of the matter.
For instance, 
we easily see that if $q$ is bounded, then
$\widetilde{P}_tf(x)\leq c_{N} P_tf(x)$ for every $t\in(0,N]$, $x\in\R$, $f\geq 0$.
On the other hand,
if $-q\geq 0$ is time-independent
and the above inequality holds
for some $N>0$ 
on the level of densities,
then necessarily $q\in L^{\infty}(\R)$ (see \cite[Corollary~3.4]{BBS}).
Nevertheless, perturbation techniques yield an upper bound
by means of an auxiliary density for (unbounded) $q$
from the Kato class if an appropriate 4G inequality 
for the transition density of the subordinator holds (see \cite[Proposition~2.4]{BBS}).
Generators of subordinators generalize
fractional derivative operators that
are used in statistical physics 
to model anomalous subdiffusive dynamics
(see \cite{PhysRevLett.105.170602}).

A discussion of analytic counterparts of \eqref{start} should contain the fundamental example of the standard Brownian motion in $\Rd$, $d\in\NN$. The famous result of Aizenman and Simon \cite[Theorem 4.5]{MR644024} says that in this case \eqref{start} is equivalent to
\begin{align}
\lim_{t\to 0^+} \left[\sup_{x} \int_{|z-x|< \sqrt{t}} \frac{|q(z)|}{|z-x|^{d-2}}\,dz\right]=0\,, \quad \mbox{for}\quad d\geq 3\,, \label{start3}\\
\lim_{t\to 0^+} \left[\sup_{x} \int_{|z-x|< \sqrt{t}} |q(z)|\ln \frac{t}{|z-x|^2}\,dz\right]=0\,, \quad \mbox{for}\quad d= 2\,,  \label{start2}\\
\left[\sup_{x} \int_{|z-x|< 1} |q(z)|\,dz\right]<\infty\,, \quad \mbox{for}\quad d= 1\,. \nonumber 
\end{align}
Here we also refer to Simon \cite[Proposition A.2.6]{MR670130}, Chung and Zhao \cite[Theorem 3.6]{MR1329992}, Demuth and van Casteren \cite[Theorem 1.27]{MR1772266}.
The above remains true if $\ln (t/|z-x|^2)$ is replaced by $\ln(1/|z-x|)$ for $d=2$ and if  $|q(z)|$ is multiplied by $|z-x|$ for $d=1$. In fact, the expressions in square brackets  of \eqref{start} and \eqref{start3} are comparable for $d\geq 3$, while for $d=2$ and $d=1$ similar but slightly different results hold  (see Bogdan and Szczypkowski \cite{MR3200161}, Demuth and van Casteren \cite[Theorem 1.28]{MR1772266}).
We emphasise that \eqref{start3} was used by Kato \cite{MR0203473} to prove by analytic methods that the operator $-\Delta+q$ is essentially self-adjoint (see \cite{MR0333833} for extensions to second order elliptic operators).  The equivalence of \eqref{start} with \eqref{start3} and \eqref{start2} follows also 
from
Theorem~\ref{thm:Gen_1} (see~\cite{MR1132313}). The one-dimensional case is covered by Theorem~\ref{thm:0regH0} of this paper.

In what follows we present and explain our main ideas in view of the literature. 
A~major contribution to the understanding of the subject in a general probabilistic manner is made by Zhao \cite{MR1132313}. Zhao considers a Hunt process $X=(\Omega, \mathscr{F}_t,X_t,\vartheta_t,\PP^x)$ with state space $(S,\rho)$ and life-time $\zeta$, where $S$ is a locally compact metric space with a metric $\rho$ (see \cite{MR0264757}). For a~strong sub-additive functional $A_t$ of $X$, $t\geq 0$, he discusses relations between the following three conditions
\begin{align}
\lim_{r\to 0^+} \left\{\sup_x \EE^x\left[\int_0^{\infty}\ind_{B(x,r)}(X_t)\, dA_t \right] \right\}=0\,, \tag{C1} \label{C1} \\
\lim_{t\to 0^+}  \left[ \sup_x \EE^x (A(t)) \right]=0\,, \tag{C2} \label{C2}\\
\lim_{r\to 0^+} \left\{ \sup_x \EE^x\left[ A(\tau_{B(x,r)}) \right] \right\}=0\,, \tag{C3} \label{C3}
\end{align}
in presence of three hypotheses on the process $X$,
\begin{align}
h_1(X)&\equiv \sup_{t>0} \inf_{r>0} \sup_{x\in S} \,\,\PP^x\left(\tau_{B(x,r)}>t\right)<1\,, \tag{H1} \label{H1}\\
h_2(X)&\equiv \sup_{r>0} \inf_{t>0} \sup_{x\in S}  \,\,\PP^x\left(\tau_{B(x,r)}<t\right)<1\,, \tag{H2} \label{H2}\\
h_3(X)&\equiv \sup_{u>0} \inf_{r>0} \sup_{\substack{x,\,y\in S\\\rho(x,y)\geq u}} \PP^y\left(T_{B(x,r)}<\zeta \right)<1 \,.\tag{H3} \label{H3}
\end{align}
Here for any Borel set $B$ in $S$,
$
T_B
$
is the first hitting time of $B$, $\tau_B=T_{S\setminus B}$ is the first exit time of $B$ (we let $\inf \emptyset=\infty$) and $B(x,r)=\{y\in S: \rho(x,y)<r\}$, $x\in S$, $r>0$.
We present the main theorem of Zhao \cite{MR1132313} on Figure \ref{fig} below; for instance, under (H3), \eqref{C3} implies \eqref{C1}.

\begin{figure}[!hbt]
\begin{center}
\begin{tikzpicture}[->,>=stealth',shorten >=1pt,auto,node distance=3cm,
                    thick,main node/.style={circle,draw,font=\sffamily\Large\bfseries}]

  \node[main node] (1) {C1};
  \node[main node] (2) [right of=1] {C2};
  \node[main node] (3) [right of=2] {C3};

\path
(1) edge [bend left] node {\footnotesize{always}} (3)
(3) edge [bend left] node {\footnotesize{(H3)}} (1)
(2) edge [bend left] node[below] {\footnotesize{(H1)}} (3)
(3) edge [bend left] node[above] {\footnotesize{(H2)}} (2);

\end{tikzpicture}
\caption{Zhao \cite{MR1132313} hypotheses and conditions.}\label{fig}
\end{center}
\end{figure}
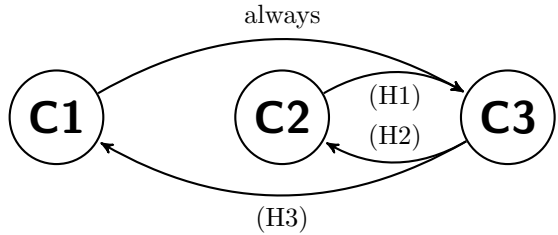
In this paper we assume that $A_t$, $t\geq 0$, is the additive functional of the form
\begin{align}\label{af}
A_t=\int_0^t |q(X_u)|du \,,
\end{align}
and we note that any additive functional is a strong sub-additive functional; see \cite[Lemma 1]{MR1132313}.
Then~\eqref{C2} coincides with \eqref{start} and as such becomes the principal object of our considerations.
We explain the origin and the choice of \eqref{starteqiv}  using the concept of $\lambda$-subprocess $X^{\lambda}$, $\lambda>0$, of~the process $X$ (see \cite{MR0264757} for 
the
definition).
We first notice that \eqref{C2} 
holds for $X$ if and only if
it holds for $X^\lambda$ (see Remark \ref{lem:KK_sub} and Definition~\ref{def:Kato}).
A similar statement is not true in general for \eqref{C1}.
For the standard Brownian motion in $\Rd$, $d\geq 3$, 
\eqref{C2} in fact coincides with \eqref{C1}, which gives rise to \eqref{start3}, yet 
for $d=2$ or $d=1$ the expectation in \eqref{C1}  is infinite for constant non-zero $q$, whereas that never happens for \eqref{C2}.
This shows that \eqref{C1} for $X$ is too strong for a general equivalence result. Therefore
we 
rely on the relations of Figure~\ref{fig} for $X^{\lambda}$,
and then \eqref{C1} results in \eqref{starteqiv}.
We also observe that \eqref{starteqiv} holds for $X$ if and only if it holds for $X^{\lambda '}$, $\lambda '>0$ (see Remark~\ref{lem:KK_sub}).
To ultimately clarify the choice of $X^{\lambda}$ we note that  $h_1(X^\lambda)=h_1(X)$, $h_2(X^\lambda)=h_2(X)$ and $h_3(X^{\lambda})\leq h_3(X)$ (see Lemma~\ref{lem:h1h2_sub} and \ref{lem:h3_sub}).

We now restrict ourselves to the case of the L{\'e}vy process in $\Rd$. Besides being a Hunt process in~$\Rd$, $X$ is also translation invariant. We point out that \eqref{H2} holds for every  L{\'e}vy process and  \eqref{H1} holds if and only if $X$ is not a compound Poisson process (see Remark~\ref{H1H2}).
The case of the compound Poisson process is entirely described in Proposition~\ref{calInKato}.
Thus, in the remaining cases, \eqref{H3} for $X^{\lambda}$ becomes decisive for 
understanding the confines of the
applicability of Figure~\ref{fig} to~$X^{\lambda}$. By Proposition~\ref{lem:h3_sub_Levy} the study of 
$h_3(X^{\lambda})$
 reduces to the analysis of the first hitting time of a single point set by the original L{\'e}vy process~$X$. Namely, we consider 
(see also Lemma~\ref{lem:Bret})
\begin{align}\label{h_lambda}
h^{\lambda}(x)=\EE^0 e^{-\lambda T_{\{x\}}}\,,\qquad x\in\Rd\,.
\end{align}
Eventually, by Corollary~\ref{lem:H3} and Remark~\ref{H1H2}
we obtain the following characterization.
\begin{proposition}\label{prop:summ}
Let $X$ be a L{\'e}vy process in $\Rd$ and $\lambda>0$. All hypotheses \eqref{H1}, \eqref{H2} and \eqref{H3}  are satisfied for
$X^{\lambda}$
if and only if
$\{0\}$ is polar.
\end{proposition}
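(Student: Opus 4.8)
The plan is to decouple the three hypotheses for $X^{\lambda}$ and to show that, once the compound Poisson case is set aside, the entire content of the statement is carried by \eqref{H3}. First I would record the two facts about \eqref{H1} and \eqref{H2} furnished by Remark~\ref{H1H2}: \eqref{H2} holds for every L\'evy process, and \eqref{H1} holds precisely when $X$ is not a compound Poisson process. Since $h_1(X^{\lambda})=h_1(X)$ and $h_2(X^{\lambda})=h_2(X)$ (Lemma~\ref{lem:h1h2_sub}), the same dichotomy transfers verbatim to $X^{\lambda}$: \eqref{H2} is satisfied for $X^{\lambda}$ unconditionally, while \eqref{H1} is satisfied for $X^{\lambda}$ if and only if $X$ is not compound Poisson. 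Thus the only hypothesis whose validity is genuinely in question is \eqref{H3} for $X^{\lambda}$, quantified by $h_3(X^{\lambda})$.

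Next I would link the compound Poisson alternative to polarity. If $X$ is a compound Poisson process then it rests at its starting point for a positive (exponentially distributed) time, so $0$ is regular for $\{0\}$ and in particular $\{0\}$ is not polar; conversely, polarity of $\{0\}$ forces $X$ to be non-compound-Poisson. Hence, when $X$ is compound Poisson, \eqref{H1} fails for $X^{\lambda}$ while $\{0\}$ is non-polar, so both sides of the asserted equivalence are false and there is nothing to prove. In the complementary case $X$ is not compound Poisson, both \eqref{H1} and \eqref{H2} hold for $X^{\lambda}$, and therefore the statement \emph{all of \eqref{H1}, \eqref{H2}, \eqref{H3} hold for $X^{\lambda}$} is equivalent to $h_3(X^{\lambda})<1$. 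At this point I would invoke Corollary~\ref{lem:H3}, which identifies $h_3(X^{\lambda})<1$ with polarity of $\{0\}$, and the proposition follows: if $\{0\}$ is polar then $X$ is non-compound-Poisson and all three hypotheses hold, whereas if all three hold then \eqref{H1} gives non-compound-Poisson and \eqref{H3} together with Corollary~\ref{lem:H3} gives polarity of $\{0\}$.

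The substantive work is hidden in Corollary~\ref{lem:H3} and the preceding Proposition~\ref{lem:h3_sub_Levy}, which by translation invariance rewrite $h_3(X^{\lambda})$ as $\sup_{u>0}\inf_{r>0}\sup_{|w|\geq u}\EE^0 e^{-\lambda T_{B(w,r)}}$ and reduce the analysis to the single-point quantity $h^{\lambda}$ from \eqref{h_lambda}. If I had to establish this from scratch, the delicate points would be (i) the non-commuting order of $\inf_{r}$ and $\sup_{|w|\geq u}$, so that $\inf_{r}\sup_{|w|\geq u}\EE^0 e^{-\lambda T_{B(w,r)}}$ need not equal $\sup_{|w|\geq u} h^{\lambda}(w)$ and must be controlled through the hitting-time continuity of Lemma~\ref{lem:Bret}, and (ii) the behaviour of $h^{\lambda}$ near the origin. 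Polarity makes $h^{\lambda}\equiv 0$ off the origin and, crucially, forces the hitting probabilities of the shrinking balls $B(w,r)$ to vanish as $r\to 0^+$ uniformly in $|w|\geq u$, giving $h_3(X^{\lambda})=0$; non-polarity keeps these probabilities bounded away from $0$ and in fact drives $h_3(X^{\lambda})$ to $1$. The genuinely subtle sub-case is a non-polar $\{0\}$ at which $0$ is \emph{irregular}: there $h^{\lambda}(0)<1$, yet approaching the origin so as to hit points that the process reaches quickly yields $T_{\{w\}}\to 0$ and hence $\sup_{w\neq 0} h^{\lambda}(w)=1$, which is exactly what is needed to keep $h_3(X^{\lambda})=1$. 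This possible discontinuity of $h^{\lambda}$ at the origin is the main obstacle, and it is why the reduction to polarity requires the careful point-hitting analysis of Corollary~\ref{lem:H3} rather than a naive continuity argument.
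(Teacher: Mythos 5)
Your proof is correct and takes essentially the same route as the paper, which derives the proposition in one stroke from Remark~\ref{H1H2} (transferred to $X^{\lambda}$ via Lemma~\ref{lem:h1h2_sub}) together with Corollary~\ref{lem:H3}, exactly as you do, with the compound Poisson case disposed of because both sides of the equivalence fail there. Your closing commentary on the inner workings of Corollary~\ref{lem:H3} is also sound in substance, except that the $\inf_r$/$\sup$ interchange is handled in the paper by Lemmas~\ref{lem:point}, \ref{lem:tau} and \ref{lem:ZhH3} and Remark~\ref{rem:Levy_hlambda} rather than by Lemma~\ref{lem:Bret}; in any case that part is not needed once the corollary is invoked.
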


Therefore
Theorem~\ref{thm:Gen_1} goes much beyond
the range of
\cite{MR1132313}.
The reason is that in our work
we also investigate all the cases 
that are not covered by Figure~1.
Our initial study effects in a list that  
classifies L{\'e}vy processes according to a non-degeneracy hypothesis \Hyp{} 
and specific properties of $h^{\lambda}$,
which is thoroughly examined by Bretagnolle \cite{MR0368175}
for one-dimensional non-Poisson L{\'e}vy processes.
A full layout of our development is presented  in Section~\ref{sec:scheme}.
 Theorem~\ref{thm:Gen_1}  results as a summary of Proposition~\ref{calInKato} and 6 theorems of Section~\ref{sec:thm}.
We stress that the non-symmetric cases or 
those close to 
the compound Poisson process (without \Hyp{}) are more delicate and require more precision.

In \cite[Lemma~4]{MR1132313} Zhao proposes sufficient conditions on $X$ under which \eqref{H1}-\eqref{H3} are satisfied for $X^{\lambda}$.
He uses them to re-prove the result of Aizenman and Simon \cite{MR644024} for $d\geq 2$.
He also verifies hypotheses \eqref{H1}-\eqref{H3} directly for $X$ in the case of L{\'e}vy processes admitting rotationally symmetric transition density with additional assumption on the behaviour of the density 
integrated in time
\cite[Lemma~5]{MR1132313}. Finally he applies that to describe \eqref{start} for symmetric $\alpha$-stable processes, $d>\alpha$, and the relativistic process. We generalize \cite[Lemma~5]{MR1132313} in Theorem~\ref{prop:G_unb_b}.

The paper is organized as follows. In Section~2 we introduce the non-degeneracy hypothesis \Hyp{} for a L{\'e}vy process. Next, we give a classification of L{\'e}vy processes that provides a detailed plan of our research. In the last part of Section~2 we prove 
results 
concerning
hypotheses \eqref{H1}-\eqref{H3}.
In Section~3, for a Hunt process $X$, we define Kato classes $\KK(X)$ and $\pK(X)$ of functions $q$ satisfying \eqref{start} and \eqref{starteqiv}, respectively.
We give other general descriptions of both of those classes and we establish their
initial relations for L{\'e}vy processes. In~Section~4 we prove the main description theorems for L{\'e}vy processes,  separately under and without \Hyp{}. Section~4 ends with additional equivalence results involving the class $\pK^0(X)$ (see \eqref{cond_G_infty^0}).
In Section~5 we present a supplementary discussion on isotropic unimodal L{\'e}vy processes and subordinators. The paper finishes with
examples.

\section{Preliminaries}\label{sec:prel}

Our main focus in this paper is on
a (general) L{\'e}vy process $X$ in $\Rd$ (see \cite{MR1739520}).
The characteristic exponent $\psi$ of $X$ defined by $\EE^0 e^{i\left<x, X_t \right>}=e^{-t \psi(x)}$ equals
\begin{align*}
\psi(x)=-i\left<x,\gamma\right>+\left<x,Ax\right>-\int_{\R^d}\left(e^{i\left<x,z\right>}-1-i\left<x,z\right>\ind_{|z|<1}\right)\nu(dz), \quad x\in\R^d,
\end{align*}
where  $\gamma\in\R^d$, $A$ is  a symmetric non-negative definite matrix and $\nu$ is a L\'{e}vy measure, i.e., $\nu(\{0\})=0$, $\int_{\Rd}\left(1\wedge|z|^2\right)\nu(dz)<\infty$.
If $\int_{\Rd}\left(1\wedge|z|\right)\nu(dz)<\infty$, then the above representation simplifies to
\begin{align*}
\psi(x)=-i\left<x,\gamma_0\right>+\left<x,Ax\right>-\int_{\R^d}\left(e^{i\left<x,z\right>}-1\right)\nu(dz), \quad x\in\R^d,
\end{align*}
where $\gamma_0=\gamma-\int_{\Rd}z\ind_{|z|<1}\nu(dz)$.
Further, if $\gamma_0=0$, $A=0$ and $\nu(\Rd)<\infty$, then $X$ is called a compound Poisson process (see \cite[Remark 27.3]{MR1739520}). We say that $X$ is non-Poisson if $X$ is not a compound Poisson process.
Recall that $\EE^x F(X)=\EE^0 F(X+x)$ for $x\in\Rd$ and Borel
functions $F\geq 0$ on paths. In particular $h^{\lambda}(x)=\EE^{(-x)}e^{-\lambda T_{\{0\}}}$, and thus the following holds.
\begin{remark}\label{rem:0_pol_h}
$\{0\}$ is polar if and only if $h^{\lambda}(x)=0$, $x\in\Rd$.
\end{remark}
\begin{remark}\label{rem:0_reg_h}
$0$ is regular for $\{0\}$ if and only if $h^{\lambda}(0)=1$.
\end{remark}
\begin{remark}\label{rem:var}
$X$ is such that $A=0$, $\gamma_0\in\Rd$, $\int_{\Rd}(|x|\wedge 1)\nu(dx)<\infty$ if and only if $X$ has finite variation on finite time intervals (\cite[Theorem~21.9]{MR1739520}). Then  $\PP^0(\lim_{s\to 0^+} s^{-1}X_s=\gamma_0)=1$ 
(\cite[Theorem~1]{MR0183022}; see also
 \cite[Theorem~43.20]{MR1739520}).
\end{remark}

\begin{lemma}\label{lem:al_sure_0}
Let $X$ be non-Poisson. Then $\PP^0(X_t=0)=0$ except for countably many $t>0$.
\end{lemma}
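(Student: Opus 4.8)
The plan is to read everything off the single scalar function $p(t):=\PP^0(X_t=0)=\mu_t(\{0\})$, where $\mu_t$ denotes the law of $X_t$, and to split according to the generating triplet $(A,\nu,\gamma)$. Since $X$ is non-Poisson, the compound Poisson description $A=0,\ \nu(\Rd)<\infty,\ \gamma_0=0$ is violated, so at least one of the following holds: (a) $A\neq 0$; (b) $\nu(\Rd)=\infty$; (c) $A=0$, $\nu(\Rd)<\infty$ and $\gamma_0\neq 0$ (here $\gamma_0$ is well defined, since $\nu$ is finite). I would handle (a)--(b) together and (c) separately.

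In the cases $A\neq 0$ or $\nu(\Rd)=\infty$ I would invoke the structural continuity result for infinitely divisible laws (Sato \cite[Theorem~27.4]{MR1739520}): whenever the Gaussian part is nontrivial or the Lévy measure has infinite total mass, $\mu_t$ is continuous (atomless) for every $t>0$. Hence $p(t)=\mu_t(\{0\})=0$ for all $t>0$, which is even stronger than the assertion. For a self-contained route I would instead control the total atomic mass through the Wiener-type identity
\begin{align*}
\sum_{a\in\Rd}\mu_t(\{a\})^2=\lim_{R\to\infty}\frac{1}{(2R)^d}\int_{[-R,R]^d}\big|e^{-t\psi(x)}\big|^2\,dx=\lim_{R\to\infty}\frac{1}{(2R)^d}\int_{[-R,R]^d}e^{-2t\,\mathrm{Re}\,\psi(x)}\,dx,
\end{align*}
and show the right-hand side vanishes, using that $\mathrm{Re}\,\psi(x)=\langle x,Ax\rangle+\int_{\Rd}(1-\cos\langle x,z\rangle)\nu(dz)$ grows in the appropriate averaged sense as soon as $A\neq 0$ or $\nu(\Rd)=\infty$.

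Case (c) is the only one producing genuine atoms, and here I would use the explicit compound-Poisson-with-drift form $\mu_t=e^{-ct}\sum_{n\geq 0}\tfrac{t^n}{n!}\,\nu^{*n}*\delta_{\gamma_0 t}$, with $c=\nu(\Rd)$ and $\nu^{*0}=\delta_0$, so that $p(t)=e^{-ct}\sum_{n\geq 0}\tfrac{t^n}{n!}\,\nu^{*n}(\{-\gamma_0 t\})$. The term $n=0$ contributes $\delta_0(\{-\gamma_0 t\})=0$, since $\gamma_0\neq 0$ and $t>0$. For each $n\geq 1$ the set $D_n$ of atoms of the finite measure $\nu^{*n}$ is at most countable, and because $t\mapsto -\gamma_0 t$ is injective, $\{t>0:\ -\gamma_0 t\in D_n\}$ is countable; thus $p(t)>0$ forces $t$ into the countable set $\bigcup_{n\geq 1}\{t>0:\ -\gamma_0 t\in D_n\}$, as required.

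The main obstacle is the atomlessness in case (b): that an infinite Lévy measure forces $\mu_t$ to have no atoms is the one non-trivial input. I would either cite Sato's theorem directly or, for a hands-on argument, combine the Wiener identity above with the growth of $\mathrm{Re}\,\psi$ (equivalently, realise $X_t$ as an infinite convolution of Poissonian jump laws and apply the Jessen--Wintner purity/Lévy continuity criterion, under which divergence of the sum of one minus the largest atom of each factor yields a continuous law). The remaining steps — the reduction to the three cases and the countability bookkeeping in (c) — are routine.
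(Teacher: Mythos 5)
Your proof is correct, and while it opens with the same reduction as the paper (invoking Sato's Theorem~27.4 to dispose of the cases $A\neq 0$ or $\nu(\Rd)=\infty$, leaving only compound Poisson with non-zero drift), it handles that remaining case by a genuinely different and more self-contained argument. The paper splits $\nu=\nu^d+\nu^c$ into discrete and continuous parts, writes $X_t=X_t^d+X_t^c+\gamma_0 t$, and then leans on two further structural results from Sato (Remark~27.3 for continuity of the law of $X_t^c$ off the origin, and Corollary~27.5 with Proposition~27.6 to produce a single countable set $C_{X^d}$ controlling the atoms of $X^d_t+\gamma_0 t$ for all $t$ simultaneously). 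You instead expand the law as $e^{-ct}\sum_{n\geq 0}\tfrac{t^n}{n!}\,\nu^{*n}*\delta_{\gamma_0 t}$, kill the $n=0$ term using $\gamma_0 t\neq 0$, and observe that each finite measure $\nu^{*n}$ has at most countably many atoms, so the injectivity of $t\mapsto-\gamma_0 t$ confines the bad times to a countable union of countable sets. This avoids the discrete/continuous decomposition and the two extra citations entirely; the only inputs are the convolution series and the elementary fact that finite measures have countably many atoms. What the paper's route buys in exchange is slightly more structural information (an explicit description of where the atoms of the jump part can sit, uniform in $t$), but for the purpose of this lemma your bookkeeping is cleaner. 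Your alternative ``hands-on'' route to the continuity statement via the Wiener identity is only a sketch --- making the averaged growth of $\mathrm{Re}\,\psi$ rigorous when $\nu(\Rd)=\infty$ is exactly the content of Sato's theorem --- but since you primarily cite that theorem, just as the paper does, nothing is missing.
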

\begin{proof}
By \cite[Theorem 27.4]{MR1739520} it suffices to consider compound Poisson process with non-zero drift.
Let then $\nu$ and $\gamma_0$ be its L{\'e}vy measure and drift.
According to the decomposition $\nu=\nu^d+\nu^c$ for discrete and continuous part (see \cite[Chapter 5, Section 27]{MR1739520}) we write $X_t=X_t^d+X_t^c+\gamma_0 t$. For $t>0$, by \cite[Remark 27.3]{MR1739520} $\PP^0(X_t^c\in dz)$ is continuous on $\Rd\setminus\{0\}$, therefore $\PP^0(X_t^c\in C\setminus \{0\})=0$ for any countable set $C\subset \Rd$.
By \cite[Corollary 27.5 and Proposition 27.6]{MR1739520} there is a countable set $C_{X^d}\subset \Rd$ such that $\PP^0(X_t^d +\gamma_0 t=0)>0$ if and only if $(-\gamma_0 t) \in C_{X^d}$. Thus $\PP^0(X_t^d+\gamma_0 t=0)=0$ except for countably many $t>0$. Finally,
\begin{align*}
\PP^0(X_t^d+X_t^c+\gamma_0 t=0)
&=\PP^0(X_t^c=0, \,X_t^d+\gamma_0 t=0) +\PP^0(X_t^c=-(X_t^d+\gamma_0 t),\, X_t^d+\gamma_0 t\neq 0)\\
&\leq \PP^0(X_t^d+\gamma_0 t=0) +\PP^0(X_t^c\in -(C_{X^d}+\gamma_0 t)\setminus \{0\})=0\,,
\end{align*}
except for countably many $t>0$.
\end{proof}

We say that a L{\'e}vy process $X$ is non-sticky
if $\PP^0(\tau_{\{0\}}>0)=0$,
or
equivalently 
that
the hypothesis (H) from \cite{MR0368175} holds. 
Lemma~\ref{lem:al_sure_0} reinforces remarks following  \cite[Lemma~3]{MR1132313}.
\begin{remark}\label{rem:non_stick}
$X$ 
is non-sticky  if and only if $X$ is non-Poisson.
\end{remark}

If necessary we specify which L{\'e}vy process we have in mind by adding a superscript, for~instance $h^{Z,\lambda}$ is the function given by \eqref{h_lambda} that corresponds to the process $Z$.

\subsection{Non-degeneracy hypothesis \Hyp{} for L{\'e}vy processes }

Before we introduce the main non-degeneracy hypothesis on a L{\'e}vy process $X$
we recall the basic matrix notation.
We let $M^*$ to be the transpose and $M(\Rd)$ the range of $M$. 
We call $M$ a projection if it is symmetric and $M^2=M$.
For a subset $V$ by $V^{\bot}$ we denote the orthogonal complement of $V$ in $\Rd$. 
We use the following fact.

\begin{lemma}\label{lem:matrix}
If $A$ is symmetric non-negative definite
and $M^* A M=0$, then $A(\Rd)\subseteq M(\Rd)^{\bot}$.
\end{lemma}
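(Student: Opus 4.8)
The plan is to unpack the hypothesis $M^* A M = 0$ into a statement about the bilinear form defined by $A$ and then exploit the non-negative definiteness of $A$ together with its symmetry. First I would observe that since $A$ is symmetric and non-negative definite, it admits a square root: there is a symmetric non-negative definite matrix $B$ with $A = B^2 = B^* B$. The identity $M^* A M = 0$ then reads $M^* B^* B M = (BM)^* (BM) = 0$, and a matrix of the form $C^* C$ vanishes if and only if $C = 0$, so $BM = 0$. Consequently $AM = B(BM) = 0$, which is the crucial intermediate conclusion: $A$ annihilates the range of $M$, i.e.\ $M(\Rd) \subseteq \ker A$.

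Next I would translate the inclusion $M(\Rd) \subseteq \ker A$ into the desired statement about $A(\Rd)$ using the standard orthogonality between the range and kernel of a symmetric matrix. Since $A$ is symmetric, $\ker A = \ker A^* = A(\Rd)^{\bot}$, so from $M(\Rd) \subseteq A(\Rd)^{\bot}$ one obtains, by taking orthogonal complements, $A(\Rd) = A(\Rd)^{\bot\bot} \subseteq M(\Rd)^{\bot}$, which is exactly the claim. An equivalent and perhaps cleaner route avoids the square root altogether: for any $v \in \Rd$ the hypothesis gives $\langle Mv, A\,Mv\rangle = \langle v, M^* A M v\rangle = 0$, and since $A$ is non-negative definite this forces $A^{1/2} M v = 0$, hence $A M v = 0$; I would probably present whichever of these two phrasings is shortest.

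The main (and really the only) obstacle is justifying the step ``$\langle w, Aw\rangle = 0 \implies Aw = 0$'' for a symmetric non-negative definite $A$, which is precisely the fact that a non-negative definite quadratic form vanishes exactly on the kernel of the associated operator; this is immediate from the spectral decomposition $A = \sum_i \lambda_i P_i$ with $\lambda_i \geq 0$, since $\langle w, Aw\rangle = \sum_i \lambda_i \|P_i w\|^2 = 0$ forces $P_i w = 0$ for every index with $\lambda_i > 0$, and therefore $Aw = 0$. Everything else is routine linear algebra about symmetric matrices, so I expect the proof to be very short; the care needed is merely in recording the symmetry of $A$ at the point where I pass from $\ker A$ to $A(\Rd)^{\bot}$.
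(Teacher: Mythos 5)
Your proof is correct and complete. Note that the paper itself states this lemma without any proof (it is introduced merely as ``the following fact''), so there is no argument in the paper to compare against; your justification is exactly the kind of routine verification the authors chose to omit. Both of your phrasings work: the square-root route ($M^*AM=(BM)^*(BM)=0$ forces $BM=0$, hence $AM=0$) and the quadratic-form route ($\langle Mv, AMv\rangle=0$ plus non-negative definiteness gives $AMv=0$) correctly reduce the claim to $M(\Rd)\subseteq \ker A$, and the passage from there to $A(\Rd)\subseteq M(\Rd)^{\bot}$ via $\ker A = A(\Rd)^{\bot}$ (using symmetry of $A$) and the reversal of inclusions under orthogonal complementation is sound in the finite-dimensional setting.
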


\begin{remark}\label{obrot}
Let $X$ be a L{\'e}vy process in a linear subspace $V$ of $\Rd$ (see \cite[Proposition 24.17]{MR1739520}) and denote $d_0=\dim(V)$. Then there exists a rotation given by a matrix $O\in \mathcal{M}_{d\times d}$ such that $Y=OX$ is a L{\'e}vy process in $\R^{d_0}$; the correspondence between $X$ and $Y$ is one-to-one.
\end{remark}
\begin{lemma}\label{polar_rzut}
Let $X$ be a L{\'e}vy process in $\Rd$ and $\mathit{\Pi}$ be a projection. If $\{0\}$ is polar for the process $Y=\mathit{\Pi} X$, then $\{0\}$ is polar for $X$.
\end{lemma}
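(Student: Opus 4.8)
The plan is to reduce polarity of $\{0\}$ for $X$ to polarity of $\{0\}$ for the projected process $Y=\mathit{\Pi}X$ by observing that the event that $X$ hits $0$ is contained in the event that $Y$ hits $0$. The key point is that $\mathit{\Pi}$ is linear, so if $X_t=0$ then necessarily $Y_t=\mathit{\Pi}X_t=0$. First I would recall (Remark~\ref{rem:0_pol_h}) that $\{0\}$ is polar for a L\'evy process if and only if $h^{\lambda}\equiv 0$, i.e. $\EE^0 e^{-\lambda T^X_{\{0\}}}=\EE^{(-x)}e^{-\lambda T^X_{\{0\}}}$ vanishes for all $x$; equivalently $\PP^x(T^X_{\{0\}}<\infty)=0$ for every starting point $x$.

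Next I would set up the comparison of hitting times. Writing $T^X_{\{0\}}$ and $T^Y_{\{0\}}$ for the first hitting times of $0$ by $X$ and $Y$ respectively, the inclusion $\{X_t=0\}\subseteq\{Y_t=0\}=\{\mathit{\Pi}X_t=0\}$ holds pointwise in $t$ along every path, since $X_t=0$ forces $\mathit{\Pi}X_t=0$. Consequently $\{t>0:X_t=0\}\subseteq\{t>0:Y_t=0\}$, which gives the pathwise inequality $T^Y_{\{0\}}\leq T^X_{\{0\}}$, and in particular $\{T^X_{\{0\}}<\infty\}\subseteq\{T^Y_{\{0\}}<\infty\}$. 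Since polarity of $\{0\}$ for $Y$ means $\PP^x(T^Y_{\{0\}}<\infty)=0$ for all $x$, the inclusion of events immediately yields $\PP^x(T^X_{\{0\}}<\infty)=0$ for all $x$, i.e. $\{0\}$ is polar for $X$. I would phrase this using $h^{X,\lambda}(x)\leq h^{Y,\lambda}(x)$ (which follows from $T^Y_{\{0\}}\leq T^X_{\{0\}}$ and monotonicity of $u\mapsto e^{-\lambda u}$ combined with $\EE^x$), so that $h^{Y,\lambda}\equiv 0$ forces $h^{X,\lambda}\equiv 0$, and then invoke Remark~\ref{rem:0_pol_h} in both directions.

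One care I would take is that $Y=\mathit{\Pi}X$ really is a L\'evy process to which Remark~\ref{rem:0_pol_h} applies; this is standard, since a linear image of a L\'evy process is again a L\'evy process (with characteristic exponent $\psi^Y(x)=\psi^X(\mathit{\Pi}^* x)$, using that $\mathit{\Pi}$ is symmetric), so $h^{Y,\lambda}$ is well defined and the polarity criterion is legitimate for $Y$. I expect the only genuinely delicate point to be the measurability and correct handling of the hitting-time comparison at time $0$ and the identification of the event inclusion on the canonical path space, but since the inclusion $\{X_t=0\}\subseteq\{\mathit{\Pi}X_t=0\}$ is deterministic and holds for every path, the probabilistic bound $h^{X,\lambda}\leq h^{Y,\lambda}$ transfers with no further hypotheses, making this the shortest of the auxiliary lemmas. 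The main obstacle is thus essentially notational: cleanly writing the hitting-time inequality and the two applications of Remark~\ref{rem:0_pol_h}.
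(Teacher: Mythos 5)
Your proposal is correct and takes essentially the same approach as the paper: the deterministic implication $X_t+x=0\Rightarrow \mathit{\Pi}X_t+\mathit{\Pi}x=0$ gives the pathwise hitting-time comparison, and polarity of $\{0\}$ for $Y$, applied at the starting point $\mathit{\Pi}x$, yields $\PP^x(T_{\{0\}}<\infty)\leq \PP^{\mathit{\Pi}x}(T_{\{0\}}^Y<\infty)=0$. Your detour through $h^{\lambda}$ and Remark~\ref{rem:0_pol_h} is harmless but unnecessary; just make sure the bookkeeping of starting points is explicit (under $\PP^x$ the projected process starts at $\mathit{\Pi}x$, which is where the polarity hypothesis for $Y$ is invoked), as the paper does.
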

\begin{proof}
If $X_t+x=0$, then $Y_t +\mathit{\Pi}x=0$, thus
$
\inf\{t>0\colon X_t+x=0\}\geq \inf\{t>0\colon Y_t +\mathit{\Pi}x=0\}
$
and
$
\PP^x(T_{\{0\}}<\infty)\leq \PP^{\mathit{\Pi}x}(T_{\{0\}}^Y <\infty)=0
$.
\end{proof}

\begin{defn}
We say that \Hyp{} holds for $X$ if
there is no linear subspace $V$ of $\Rd$ such that
\begin{align}\label{HypHe}
\dim(V)\leq \min\{1,d-1\}&,\nonumber\\
A(\Rd)\subseteq V,\quad  \nu(\Rd\setminus V)<\infty,
\quad \mathrm{and}\quad \gamma -&\int_{\Rd\setminus V}z\ind_{B(0,1)}(z) \nu(dz) \in V.
\end{align}
\end{defn}

We give a precise probabilistic description of \Hyp{}. 

\begin{remark}\label{opisH0}
For $d=1$, \Hyp{} holds if and only if $X$ is non-Poisson.
For $d> 1$, \Hyp{} holds if and only if $X$ is non-Poisson
and is not of the form \eqref{eq:descr_X} below.
\end{remark}

\begin{proposition}\label{Poi+nor}
Let $d>1$ and $X$ be non-Poisson. Then \rm \Hyp{} does not hold holds if and only if
\begin{align}\label{eq:descr_X}
 X=Y+Z,
\end{align}
and there exist a linear subspace $V$ of $\Rd$, $\dim(V)=1$, such that
\begin{itemize}
\item[i)] $Y$ and $Z$ are independent,
\item[ii)]  $Y$ is either zero or a compound Poisson process with the L{\'e}vy measure vanishing on $V$,
\item[iii)] $Z$ is not a compound Poisson process,
\item[iv)] $Z$ is supported on $V$. 
\end{itemize}
\end{proposition}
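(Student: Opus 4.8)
The plan is to prove Proposition~\ref{Poi+nor} by translating the algebraic failure of \Hyp{} into the structural decomposition \eqref{eq:descr_X}, working in both directions. Throughout I fix $d>1$ and assume $X$ is non-Poisson with characteristic triple $(\gamma, A, \nu)$.

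\emph{The easier direction ($\Leftarrow$).}
First I would assume $X=Y+Z$ with $Y,Z$ as in (i)--(iv) and exhibit the subspace $V$ that witnesses the failure of \Hyp{}. Since $\dim(V)=1$ and $d>1$, we have $\min\{1,d-1\}=1$, so the dimension constraint $\dim(V)\leq 1$ is met. By independence (i) the triples add: writing $(\gamma_Y,A_Y,\nu_Y)$ and $(\gamma_Z,A_Z,\nu_Z)$ for the triples of $Y$ and $Z$, we have $A=A_Y+A_Z$, $\nu=\nu_Y+\nu_Z$, and $\gamma=\gamma_Y+\gamma_Z$ (with the usual truncation convention). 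Since $Y$ is zero or compound Poisson, $A_Y=0$; since $Z$ is supported on $V$ (iv), $A_Z(\Rd)\subseteq V$, hence $A(\Rd)\subseteq V$. By (ii) $\nu_Y$ vanishes on $V$ and is finite (as $Y$ is compound Poisson), while by (iv) $\nu_Z$ is concentrated on $V$; therefore $\nu(\Rd\setminus V)=\nu_Y(\Rd\setminus V)=\nu_Y(\Rd)<\infty$. Finally the drift condition $\gamma-\int_{\Rd\setminus V}z\ind_{B(0,1)}(z)\,\nu(dz)\in V$ follows because subtracting the truncated off-$V$ part of $\nu$ exactly removes the contribution of $\nu_Y$, leaving $\gamma_Z$ plus an in-$V$ correction, all of which lie in $V$. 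Thus $V$ satisfies \eqref{HypHe} and \Hyp{} fails.

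\emph{The harder direction ($\Rightarrow$).}
Now I would assume \Hyp{} fails, so there is a subspace $V$ with $\dim(V)\leq 1$ satisfying \eqref{HypHe}, and I must produce the decomposition. Because $X$ is non-Poisson, $V$ cannot be $\{0\}$: otherwise $A=0$, $\nu(\Rd)<\infty$, and the drift condition would force $\gamma_0=0$, making $X$ compound Poisson, a contradiction. Hence $\dim(V)=1$. The idea is to \emph{split the L\'evy measure along} $V$: set $\nu_Z=\nu|_V$ and $\nu_Y=\nu|_{\Rd\setminus V}$, which is finite by \eqref{HypHe}. I let $Z$ be the L\'evy process supported on $V$ carrying all of $A$ (legitimate since $A(\Rd)\subseteq V$), the measure $\nu_Z$, and the drift chosen so that $Z$ lives in $V$; I let $Y$ be the compound Poisson process with finite L\'evy measure $\nu_Y$ (vanishing on $V$, no Gaussian part) and the residual drift. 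The drift condition in \eqref{HypHe} is precisely what guarantees the residual drift of $Z$ lands in $V$, so that $Z$ is genuinely a process on $V$ (via Remark~\ref{obrot} and \cite[Proposition 24.17]{MR1739520}); meanwhile $Y$ is zero exactly when $\nu_Y=0$. Independence (i) is built into this construction by assigning disjoint parts of the jump measure to $Y$ and $Z$ and splitting the Gaussian/drift deterministically. What remains is to verify that $Z$ is not compound Poisson (iii): since $X=Y+Z$ with $Y$ compound Poisson and $X$ non-Poisson, and compound Poisson processes are closed under independent sums, $Z$ must fail to be compound Poisson.

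\emph{Main obstacle.}
I expect the delicate point to be the $(\Rightarrow)$ direction, specifically showing that $Z$ is a \emph{bona fide} L\'evy process supported on $V$ rather than merely a process whose triple is concentrated on $V$ in a formal sense. The content is that the drift of the $V$-part, after the truncation bookkeeping, actually lies in $V$; this is exactly the third clause of \eqref{HypHe}, but matching it against the standard truncated L\'evy--Khintchine representation requires care with the $\ind_{|z|<1}$ cutoff and with how the finite off-$V$ measure $\nu_Y$ contributes a drift that must be absorbed into $Y$ (where no truncation is needed since $\nu_Y$ is finite, cf. the simplified representation in Section~\ref{sec:prel}). Lemma~\ref{lem:matrix} is not directly needed here since $A(\Rd)\subseteq V$ is assumed, but the analogous algebra---ensuring the Gaussian part is cleanly assigned to $Z$---must be checked. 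Once the triples are correctly separated and shown to sum to $(\gamma,A,\nu)$, independence and the claimed properties (i)--(iv) follow, and Remark~\ref{rem:non_stick} together with closure of the compound Poisson class under independent convolution finishes (iii).
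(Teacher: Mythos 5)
Your proposal is correct and follows essentially the same route as the paper: both directions rest on splitting the L\'evy triplet along $V$, taking $Y$ compound Poisson with L\'evy measure $[\nu]_{\Rd\setminus V}$ and $Z$ with triplet $\left(A,\ \gamma-\int_{\Rd\setminus V}z\ind_{B(0,1)}(z)\,\nu(dz),\ [\nu]_V\right)$, invoking \cite[Proposition 24.17]{MR1739520} to identify support with the triplet being concentrated on $V$, and proving the converse by summing the triplets of $Y$ and $Z$. Your extra observations (ruling out $\dim(V)=0$ via non-Poissonness, and deducing (iii) from closure of compound Poisson processes under independent sums) are exactly the details the paper leaves implicit, and your ``residual drift'' bookkeeping for $Y$ amounts to the paper's explicit drift assignment, so there is no gap.
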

\begin{proof}
Since we assume that $X$ is non-Poisson, if \eqref{HypHe} holds and $\dim(V)\leq \min\{1,d-1\}$, then $\dim(V)=1$. We let $Y$ to be a compound Poisson process with the L\'{e}vy measure  $\nu^Y=[\nu]_{\Rd\setminus V}$ and let $Z$ to be a L{\'e}vy process with the L{\'e}vy triplet $(A,\gamma-\int_{\Rd\setminus V}z\ind_{B(0,1)}(z)\nu(dz), [\nu]_V)$, where $[\nu]_{B}$ denotes the measure $\nu$ restricted to a set $B$. By definition $\psi=\psi^Y+\psi^Z$, hence $X=Y+Z$ and
i), ii) and iii) are satisfied.
The property iv) follows from \cite[Proposition 24.17]{MR1739520}.
Conversely, if $X$ is of the form  \eqref{eq:descr_X}, then its L{\'e}vy triplet is given by $A=A^Z$, $\gamma=\gamma^Z + \int_{\Rd \setminus V}z\ind_{B(0,1)}(z)\nu^Y(dz)$ and $\nu=\nu^Y+\nu^Z$. Then \eqref{HypHe} holds since $\nu=\nu^Y$ on $\Rd\setminus V$.
\end{proof}

\noindent
The hypothesis
\Hyp{} agrees with the hypothesis~(H) from~\cite{MR0368175}
if $d=1$.
In particular, for $d=1$ under \Hyp{} we have that $\{0\}$ is essentially polar if and only if $\{0\}$ is polar. As known, in $d>1$ $\{0\}$ is 
always
essentially polar (see \cite[Theorem~16 and Corollary~17]{MR1406564}).

\begin{proposition}\label{polar}
Let $d>1$ and assume {\rm \Hyp{}}. Then $\{0\}$ is polar.
\end{proposition}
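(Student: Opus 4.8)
The plan is to reduce, via Remark~\ref{rem:0_pol_h}, to showing that $h^{\lambda}\equiv 0$, and to exploit the fact recalled above that for $d>1$ the point $\{0\}$ is essentially polar, i.e.\ $h^{\lambda}=0$ Lebesgue-almost everywhere. Under \Hyp{} the task is therefore to upgrade this almost-everywhere vanishing to vanishing everywhere, and the whole difficulty is that such an upgrade fails precisely for the degenerate processes of Proposition~\ref{Poi+nor} that \Hyp{} is designed to exclude. Throughout I would use Lemma~\ref{polar_rzut}: to prove $\{0\}$ polar for $X$ it suffices to exhibit a projection $\mathit{\Pi}$ such that $\{0\}$ is polar for the planar process $\mathit{\Pi}X$.

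First I would separate the Gaussian directions. If $\operatorname{rank}A\geq 2$, then using Lemma~\ref{lem:matrix} and Remark~\ref{obrot} to fix coordinates I can choose a projection $\mathit{\Pi}$ onto a two-dimensional subspace of $A(\Rd)$ so that $\mathit{\Pi}X$ is a Lévy process in $\R^{2}$ with a non-degenerate Gaussian component. Such a process has a smooth bounded transition density $p_t$ with $p_t(0)\gtrsim t^{-1}$ as $t\to 0^+$, so its $\lambda$-potential density at the origin, $\int_0^{\infty}e^{-\lambda t}p_t(0)\,dt$, is infinite; by the standard criterion for hitting a single point this forces $\{0\}$ to be polar for $\mathit{\Pi}X$, and Lemma~\ref{polar_rzut} transfers polarity to $X$. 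This disposes of every case with a Gaussian part of rank at least two.

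It remains to treat $\operatorname{rank}A\leq 1$, where the jump measure must do the work. Here \Hyp{} (equivalently, that $X$ is non-Poisson and not of the form \eqref{eq:descr_X}, by Remark~\ref{opisH0} and Proposition~\ref{Poi+nor}) guarantees that $\nu$ is not, after a Poisson correction, concentrated on any line carrying $A(\Rd)$: there is infinite jump activity transverse to every such line. I would use this to produce a two-dimensional projection $\mathit{\Pi}X$ that is again a genuinely non-degenerate planar Lévy process, whose $\lambda$-resolvent is therefore absolutely continuous with density $u^{\lambda}$. For an absolutely continuous resolvent one has $h^{\lambda}_{\mathit{\Pi}X}=u^{\lambda}/u^{\lambda}(0)$, and $\{0\}$ is non-polar for $\mathit{\Pi}X$ exactly when $u^{\lambda}(0)<\infty$, i.e.\ when $\int_{\R^2}\operatorname{Re}(\lambda+\psi_{\mathit{\Pi}})^{-1}\,d\xi<\infty$. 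Since $\{0\}$ is essentially polar in $\R^{2}$, a bounded potential density would make $h^{\lambda}_{\mathit{\Pi}X}$ positive on a set of positive measure, a contradiction; hence $u^{\lambda}(0)=\infty$, the point $\{0\}$ is polar for $\mathit{\Pi}X$, and Lemma~\ref{polar_rzut} once more lifts polarity to $X$.

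The main obstacle is this last, jump-dominated regime. Without a Gaussian term to supply smoothing I must verify that the transverse infinite activity forced by \Hyp{} genuinely yields an \emph{absolutely continuous} resolvent for the planar projection, and that the projection does not itself collapse into the degenerate form \eqref{eq:descr_X} (for which the conclusion is false); this is exactly where \Hyp{} enters in an essential way. The non-symmetric case is the delicate point, since establishing both the absolute continuity and the divergence of the potential density at the origin amounts to controlling $\operatorname{Im}\psi_{\mathit{\Pi}}$ (the drift and asymmetry) against $\operatorname{Re}\psi_{\mathit{\Pi}}$, and it is here that the distinction between \Hyp{} processes and the excluded degenerate ones becomes decisive.
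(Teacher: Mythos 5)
Your overall strategy (essential polarity in $d>1$ plus an upgrade to genuine polarity, fed through Lemma~\ref{polar_rzut}) points in the right direction, and your first case (rank of $A$ at least $2$) can be made rigorous: a planar projection with non-degenerate Gaussian part has absolutely continuous transition kernels, and then the duality formula $\EE^x e^{-\lambda T_{\{0\}}}=u^{\lambda}(-x)/u^{\lambda}(0)$ (with the convention $c/\infty=0$), valid for processes in duality satisfying the absolute continuity hypothesis, turns $u^{\lambda}(0)=\infty$ into polarity. But the decisive case, rank of $A$ at most $1$, contains a fatal gap: the assertion that \Hyp{} lets you ``produce a two-dimensional projection $\mathit{\Pi}X$ \ldots whose $\lambda$-resolvent is therefore absolutely continuous'' is simply false. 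Take $d=2$, $A=0$, zero drift, and $\nu=\sum_k a_k\,\delta_{y_k}$ with atoms $y_k=(2^{-k},4^{-k})$ on a parabola, $\sum_k a_k=\infty$ and $\sum_k a_k|y_k|<\infty$. Every line meets the parabola in at most two points, so $\nu(\R^2\setminus V)=\infty$ for every one-dimensional subspace $V$ and \Hyp{} holds; yet $X_t$ takes values in the countable group generated by the atoms, so $X$ and every linear image of $X$ have purely atomic distributions and purely atomic resolvents. No projection whatsoever has an absolutely continuous resolvent, your identity $h^{\lambda}=u^{\lambda}/u^{\lambda}(0)$ has no meaning, and your contradiction argument never gets started — although the proposition (correctly) asserts that $\{0\}$ is polar for this process.

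This is not a repairable technicality within your framework: for such ``arithmetically structured'' jump processes the passage from essential polarity (which the Fourier test $\int\mathrm{Re}\,(\lambda+\psi)^{-1}\,d\xi=\infty$ of \cite{MR1406564} gives in every dimension $d\geq 2$) to actual polarity is exactly the content of Kesten's theorem in the form of \cite[Theoreme~4]{MR0368175}, a deep result whose proof is designed to work without any absolute continuity. That theorem is what the paper invokes: it takes the smallest subspace $V$ satisfying the carrier conditions in \eqref{HypHe} (so $\dim V\geq 2$ under \Hyp{}), shows by a linear-algebra argument using \cite[Proposition~11.10]{MR1739520} and Lemma~\ref{lem:matrix} that no one-dimensional projection of $\mathit{\Pi}_V X$ is compound Poisson (this is precisely the hypothesis of Bretagnolle's theorem), and then lifts polarity back to $X$ by Lemma~\ref{polar_rzut}. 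In effect your proposal assumes the hard theorem rather than proving it; moreover, even your conditional chain ``non-polar $\Rightarrow u^{\lambda}(0)<\infty \Rightarrow \int_{\R^2}\mathrm{Re}\,(\lambda+\psi_{\mathit{\Pi}})^{-1}\,d\xi<\infty$'' needs the duality machinery in its first step and a separate argument in its second, neither of which is supplied. (A smaller, fixable point: the existence of a two-dimensional projection all of whose one-dimensional projections are non-Poisson also requires proof — one must look at the span of all ``bad'' lines and use \Hyp{} to see its orthogonal complement has dimension at least two — whereas the paper avoids this by working in $\dim V\geq 2$ directly rather than forcing the dimension down to two.)
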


\begin{proof}
Let $V$ be the smallest in dimension linear subspace in $\Rd$ satisfying
\eqref{HypHe}.
Now, let $\mathit{\Pi}_1$ be the projection on $V$ and define $Y=\mathit{\Pi}_1X$. Observe that by \Hyp{} we have $\dim(V)\geq 2$.
We claim that there is no one-dimensional subspace $W\subset V$ such that the projection of $Y$ on $W$ is a compound Poisson process. For the proof assume that there is such $W$ and let $\mathit{\Pi}_2$ be the projection on $W$. Then $Z=\mathit{\Pi}_2Y=\mathit{\Pi}_2X$ is a compound Poisson process.  By \cite[Proposition~11.10]{MR1739520} we have the following consequences. First, $\mathit{\Pi}_2A\mathit{\Pi}_2=0$ and by Lemma \ref{lem:matrix} we obtain $A(\Rd)\subseteq V \cap W^{\bot}$. Next, $\nu(\Rd\setminus W^{\bot})=\nu \mathit{\Pi}_2^{-1}(\Rd\setminus \{0\})<\infty$ and then $\nu(\Rd\setminus (V\cap W^{\bot}))<\infty$. Further,
since $\mathit{\Pi}_2z=0$ on $V\cap W^{\bot}$ we have
\begin{align*}
0&= \mathit{\Pi}_2\gamma - \int_{\Rd}\mathit{\Pi}_2z\ind_{B(0,1)}(z)\nu(dz)\\
&=\mathit{\Pi}_2\gamma - \int_{\Rd\setminus(V\cap W^{\bot})} \mathit{\Pi}_2z \ind_{B(0,1)}(z)\nu(dz)\\
&=\mathit{\Pi}_2\left(\gamma -\int_{\Rd\setminus(V\cap W^\bot)}z\ind_{B(0,1)}(z)\nu(dz)\right)\,.
\end{align*}
Thus $\gamma_1=\gamma -\int_{\Rd\setminus(V\cap W^\bot)}z\ind_{B(0,1)(z)}\nu(dz) \in W^{\bot}$. Finally, by $\Rd\setminus (V\cap W^\bot)=(\Rd\setminus V )\dot\cup (V\setminus W^\bot)$ and by \eqref{HypHe},
\begin{align*}
\gamma_1=\left(\gamma -\int_{\Rd\setminus V}z\ind_{B(0,1)}(z)\nu(dz)\right)-\int_{V\setminus W^\bot}z\ind_{B(0,1)}(z)\nu(dz)\in V\,,
\end{align*}
which is a contradiction, because then \eqref{HypHe} holds with $V\cap W^{\bot}$ in place of $V$ and $\dim(V\cap W^{\bot})<\dim(V)$.
Now, by Remark \ref{obrot} we can treat $Y$ as a process in $\R^{d_0}$, $d_0=\dim(V) \geq 2$, and then by   \cite[Theoreme 4]{MR0368175} the set $\{0\}$ is a polar set for $Y$ as well as for $X$ by Lemma~\ref{polar_rzut}.
\end{proof}

\subsection{Classification of L{\'e}vy processes}\label{sec:scheme}

We outline our work-flow to analyze every L{\'e}vy process $X$.
\noindent
Exclusively one of the following situations holds for a L{\'e}vy process in $\Rd$.
\begin{enumerate}
\item \Hyp{} holds:
\begin{enumerate}
\item $d>1$ 
(
then 
$h^{\lambda}(x)=0$, $x\in\Rd$),
\item $d=1$
\begin{enumerate}
\item[]
\begin{enumerate}
\item[(A)] $h^{\lambda}(x)=0$, $x\in\R$,
\item[(B)] $h^{\lambda}(0)=\liminf_{x\to 0} h^{\lambda}(x)<\limsup_{x\to 0} h^{\lambda}(x)=1$,
\item[(C)] $h^{\lambda}(0)=\lim_{x\to 0} h^{\lambda}(x)=1$.
\end{enumerate}
\end{enumerate}
\end{enumerate}
\item \Hyp{} does not hold:
\begin{enumerate}
\item a compound Poissson process ($d\geq 1$; then $h^{\lambda}(0)=1$),
\item given by \eqref{eq:descr_X} ($d>1$)
\begin{enumerate}
\item[]
\begin{enumerate}
\item[(A')] $h^{Z,\lambda}(v)=0$, $v\in V$,
\item[(B')] $h^{Z,\lambda}(0)=\liminf_{v\in V,\,v\to 0} h^{Z,\lambda}(v)<\limsup_{v\in V,\, v\to 0} h^{Z,\lambda}(v)=1$,
\item[(C')] $h^{Z,\lambda}(0)=\lim_{v\in V,\, v\to 0} h^{Z,\lambda}(v)=1$.
\end{enumerate}
\end{enumerate}
\end{enumerate}
\end{enumerate}
The comment in the case case 1(a)
is a consequence of 
Proposition~\ref{polar} and Remark~\ref{rem:0_pol_h}.
The partition of the case 1(b) is due to
Remark~\ref{opisH0}, \ref{rem:non_stick} and \cite[Theoreme~3 and~6]{MR0368175}.
The division of the case 2 results from
Remark~\ref{opisH0}. 
The subcases of 2(b) follow from  Remark~\ref{obrot} 
and \cite{MR0368175}.

The subcases of 1(b) translate equivalently into probabilistic properties of $X$, see \cite[Theoreme 6, 8]{MR0368175} and Remark~\ref{rem:var}. We have
\begin{enumerate}
\item[]
\begin{enumerate}
\item[]
\begin{enumerate}
\item[]
\begin{enumerate}
\item[(A)] $\{0\}$ is polar,
\item[(B)] X has finite variation and non-zero drift,
\item[(C)] $0$ is regular for $\{0\}$.
\end{enumerate}
\end{enumerate}
\end{enumerate}
\end{enumerate}
The analytic counterpart by means of the characteristic exponent or the L{\'e}vy triplet is (see \cite[Theoreme~3, 7 and 8]{MR0368175})
\begin{enumerate}
\item[]
\begin{enumerate}
\item[]
\begin{enumerate}
\item[]
\begin{enumerate}
\item[(A)] $\int_{\R}\mathrm{Re}\left(\frac{1}{\lambda+\psi(z)}\right)dz=\infty$,
\item[(B)] $A=0$, $\gamma_0\neq 0$  and $\int_{\R}(|x|\wedge 1) \nu(dx)<\infty$,
\item[(C)] $A\neq 0$ or (A) does not hold and $\int_{\R}(|x|\wedge 1) \nu(dx)=\infty$.
\end{enumerate}
\end{enumerate}
\end{enumerate}
\end{enumerate}

We could similarly reformulate 2(b) for $Z$,
but in proofs of Theorem~\ref{prop:WH0_bouVar1} and~\ref{thm:WH0_0reg} we use the following description.
\begin{enumerate}
\item[]
\begin{enumerate}
\item[]
\begin{enumerate}
\item[]
\begin{enumerate}
\item[(A')] $\int_{V}\mathrm{Re}\left(\frac{1}{\lambda+\psi^Z(v)}\right)dv=\infty$ ($dv$ is the one-dimensional Lebesgue measure~on~$V$),
\item[(B')] $A^Z=0$, $\gamma_0^Z\neq 0$  and $\int_V(|x|\wedge 1) \nu^Z(dx)<\infty$,
\item[(C')] $0$ is regular for $\{0\}$.
\end{enumerate}
\end{enumerate}
\end{enumerate}
\end{enumerate}

We translate (A'), (B') and (C') into $X$ given by \eqref{eq:descr_X}.
\begin{lemma}\label{lem:pZ}
$\{0\}$ is polar for $X$ if and only if $\{0\}$ is polar for $Z$.
\end{lemma}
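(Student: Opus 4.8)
The plan is to use the decomposition $X=Y+Z$ from Proposition~\ref{Poi+nor}, where $Y$ and $Z$ are independent, $Z$ is supported on $V$, and $Y$ is either zero or a compound Poisson process with jump intensity $c:=\nu^Y(\Rd)\in(0,\infty)$ and jumps in $\Rd\setminus V$. If $Y=0$ then $X=Z$ and there is nothing to prove, so I assume $Y$ is genuinely compound Poisson. Write $0=\tau_0<\tau_1<\tau_2<\cdots$ for the jump times of $Y$ and $S_n=Y_{\tau_n}$ (with $S_0=0$) for its values; on each interval $[\tau_n,\tau_{n+1})$ one has $Y_t\equiv S_n$, hence $X_t=S_n+Z_t$. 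I recall from Remark~\ref{rem:0_pol_h} that $\{0\}$ is polar (for a given process) if and only if $h^{\lambda}\equiv 0$, and that by translation invariance polarity of $\{0\}$ is equivalent to polarity of every singleton.

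For the implication that $\{0\}$ polar for $Z$ forces $\{0\}$ polar for $X$, I would show $\PP^0(\exists\,t>0:X_t=a)=0$ for every $a\in\Rd$. Decomposing along the intervals $[\tau_n,\tau_{n+1})$ and using $X_t=S_n+Z_t$ there, it suffices to prove that for each $n$ the probability that $Z$ takes the value $a-S_n$ during $[\tau_n,\tau_{n+1})$ vanishes. On the open part $t\in(\tau_n,\tau_{n+1})$ this follows from the Markov property of $Z$ at the time $\tau_n$, which is deterministic once we condition on $Y$ since $Y\perp Z$: the post-$\tau_n$ process is a fresh copy of $Z$ started at $Z_{\tau_n}$, and polarity of the singleton $\{a-S_n\}$ for $Z$ makes this hitting probability $0$. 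The remaining contribution is the \emph{exact} hit at the jump time, $\PP(Z_{\tau_n}=a-S_n)$; here I would condition on $Y$ and use that the jump times and jump sizes of a compound Poisson process are independent, so that $\tau_n$ is independent of $S_n$ and of $Z$ and has an absolutely continuous law. Lemma~\ref{lem:al_sure_0} gives $\PP^0(Z_s=w)=0$ for all but countably many $s$ and each fixed $w$, and integrating this against the density of $\tau_n$ yields $\PP(Z_{\tau_n}=a-S_n)=0$. Summing the countably many vanishing terms gives the claim.

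For the converse, that $\{0\}$ non-polar for $Z$ forces $\{0\}$ non-polar for $X$, I would exploit that $X_t=Z_t$ for all $t\in[0,\tau_1)$, since $Y\equiv 0$ before its first jump. By the classification of the subcases 2(b), non-polarity of $\{0\}$ for $Z$ means $Z$ is in case (B') or (C'), and in both $\limsup_{v\to 0}h^{Z,\lambda}(v)=1$; in particular there is a point $a\neq 0$ with $h^{Z,\lambda}(a)>0$, equivalently $\PP^0(T^Z_{\{a\}}<\infty)>0$. Since $\tau_1$ is exponential with parameter $c$ and independent of $Z$, a direct computation gives $\PP^0(T^Z_{\{a\}}<\tau_1)=h^{Z,c}(a)>0$. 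On this event $Z$ reaches $a$ at a time $t^*=T^Z_{\{a\}}\in(0,\tau_1)$, where $t^*>0$ because $a\neq 0=Z_0$ and $Z$ is right-continuous, and there $Y_{t^*}=0$, so $X_{t^*}=a$. Hence $\PP^0(T^X_{\{a\}}<\infty)\geq h^{Z,c}(a)>0$, so $\{a\}$, and therefore $\{0\}$, is not polar for $X$.

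The main obstacle is the exact-hit-at-a-jump-time term in the first implication: $Z$ is a one-dimensional non-Poisson process that may itself hit points, and one must rule out that it sits exactly at the required value $a-S_n$ precisely at the random time $\tau_n$. The argument hinges on combining Lemma~\ref{lem:al_sure_0} (the set of times at which $Z$ charges a fixed point is countable) with the absolute continuity and the independence of the compound Poisson jump time $\tau_n$; the remaining steps are routine applications of the Markov property, translation invariance, and the classification already recorded for $Z$.
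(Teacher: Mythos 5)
Your proof is correct, but your forward direction takes a genuinely different route from the paper's. For ``$\{0\}$ polar for $Z$ implies $\{0\}$ polar for $X$'' the paper argues analytically: it projects onto $V$, notes $\psi^{\mathit{\Pi}X}=\psi^{\mathit{\Pi}Y}+\psi^{Z}$ with $\psi^{\mathit{\Pi}Y}$ bounded (a compound Poisson exponent), so the Kesten--Bretagnolle criterion $\int_V \mathrm{Re}\left(1/[\lambda+\psi^Z(v)]\right)dv=\infty$ passes to $\mathit{\Pi}X$, and then concludes via Remark~\ref{obrot}, \cite[Theoreme 7, 3]{MR0368175} and Lemma~\ref{polar_rzut}. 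You instead perform path surgery over the jump intervals $[\tau_n,\tau_{n+1})$ of $Y$, which is self-contained and purely probabilistic; the price is the exact-hit-at-$\tau_n$ term, which you handle with Lemma~\ref{lem:al_sure_0} and the absolute continuity of the Gamma law of $\tau_n$. Two remarks on that step: Lemma~\ref{lem:al_sure_0} as stated concerns the atom at $0$, so you are tacitly using its (routine, by the same proof) extension to ``$\PP^0(Z_s=w)=0$ off a countable set of $s$, for each fixed $w$''; and in fact your machinery is heavier than needed, because under the hypothesis in force all singletons are polar for $Z$ by translation invariance, whence $\PP^0(Z_s=w)\leq \PP^0\big(T_{\{w\}}^Z\leq s\big)=0$ for every $s>0$ and every $w$, so the exact-hit term vanishes identically without any countability or density argument. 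Your converse is essentially the paper's: both localize to a time window on which $Y\equiv 0$; the paper takes a fixed large $t$ and the event $\{Y_t=0\}$, while you use the first jump time $\tau_1\sim\mathrm{Exp}(c)$ of $Y$ and the clean identity $\PP^0\big(T_{\{a\}}^Z<\tau_1\big)=h^{Z,c}(a)>0$, which is arguably tidier since it sidesteps any discussion of whether $\{Y_t=0\}$ alone forces $T_{\{a\}}=T_{\{a\}}^Z$. In sum: the paper's route buys brevity and reuses Bretagnolle's integral criterion already invoked for the classification in Section~\ref{sec:scheme}, while yours buys independence from the Fourier-analytic criterion and from Lemma~\ref{polar_rzut}, at the cost of some conditioning technicalities that are all resolvable as you indicate.
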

\begin{proof}
If $\{0\}$ is polar for $Z$, then 
$\int_{V}\mathrm{Re}(1/[\lambda+\psi^Z(v)])dv=\infty$. By Lemma~\ref{polar_rzut} to verify that $\{0\}$ is polar for $X$
it suffices to show that it is polar for $\mathit{\Pi}X=\mathit{\Pi}(Y+Z)=\mathit{\Pi}Y+Z$, where $\mathit{\Pi}$ is the projection on $V$. Since $\psi^{\mathit{\Pi} X}=\psi^{\mathit{\Pi} Y}+\psi^Z$ and $\psi^{\mathit{\Pi} Y}$ is bounded ($\mathit{\Pi} Y$ is a compound Poisson process) we have by our assumption
$\int_V \mathrm{Re}(1/[\lambda +\psi^{\mathit{\Pi} X}(v)])dv=\infty$.
Thus Remark~\ref{obrot} and \cite[Theoreme 7, 3]{MR0368175} end this part of the proof.
If $\{0\}$ is not polar for $Z$, $\PP^0(T_{\{x\}}^Z<\infty)>0$ for some $x\in V$, we have for large $t>0$
$$
\PP^0 (T_{\{x\}}<\infty)\geq \PP^0 (Y_t=0,\, T_{\{x\}}=T_{\{x\}}^Z<t) = \PP^0 (Y_t=0)\PP^0( T_{\{x\}}^Z<t)>0\,.
$$
\end{proof}

\begin{lemma}\label{lem:Levy_hlambda}
$\{0\}$ is not polar for $X$ if and only if $\limsup_{x\to 0} h^{\lambda}(x)=1$.
\end{lemma}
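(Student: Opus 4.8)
The plan is to derive the equivalence from the classification of Section~\ref{sec:scheme} together with Bretagnolle's trichotomy, after reducing each case to a one-dimensional non-Poisson process. The implication from $\limsup_{x\to 0}h^{\lambda}(x)=1$ to non-polarity of $\{0\}$ is immediate: if the limit superior equals $1$, then $h^{\lambda}(x)>0$ for some $x$, so $h^{\lambda}\not\equiv 0$ and $\{0\}$ is not polar by the contrapositive of Remark~\ref{rem:0_pol_h}. For the converse I would first dispose of the trivial cases: if $d>1$ and \Hyp{} holds, then $\{0\}$ is polar by Proposition~\ref{polar}, so this situation does not arise; if $X$ is a compound Poisson process, then $h^{\lambda}(0)=1$ (case 2(a) of Section~\ref{sec:scheme}), which already brings the value $1$ into the limit superior at the origin. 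It thus remains to treat $X$ non-Poisson, which by the classification means either $d=1$ under \Hyp{} (cases (B),(C)) or the product form \eqref{eq:descr_X} (cases (B'),(C')).

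The product case \eqref{eq:descr_X} I would reduce to its one-dimensional component $Z$. By Lemma~\ref{lem:pZ}, $\{0\}$ is non-polar for $X$ if and only if it is non-polar for $Z$, so $Z$ falls under (B') or (C'). Writing $c_Y=\nu^Y(\Rd)<\infty$ for the finite jump rate of the compound Poisson part $Y$ and using the independence of $Y$ and $Z$, I would bound, for $v\in V$,
\begin{align*}
h^{\lambda}(v)&=\EE^0 e^{-\lambda T_{\{v\}}}\\
&\geq \EE^0\[e^{-\lambda T^Z_{\{v\}}}\,\ind(\text{$Y$ makes no jump before }T^Z_{\{v\}})\]\\
&=\EE^0 e^{-(\lambda+c_Y)T^Z_{\{v\}}}=h^{Z,\lambda+c_Y}(v)\,,
\end{align*}
since on the indicated event $Y$ stays at $0$ and hence $X_{T^Z_{\{v\}}}=v$, while $\PP^0(\text{no jump of }Y\text{ before }t)=e^{-c_Y t}$. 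Consequently $\limsup_{v\in V,\,v\to 0}h^{\lambda}(v)\geq \limsup_{v\in V,\,v\to 0}h^{Z,\lambda+c_Y}(v)$, and (viewing $Z$ in $\R$ through Remark~\ref{obrot}) the problem is reduced to the one-dimensional process $Z$ with the parameter $\lambda+c_Y$ in place of $\lambda$.

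Finally, for a one-dimensional non-Poisson process with $\{0\}$ non-polar I would invoke Bretagnolle~\cite{MR0368175}, exactly as recorded in the subcases (B) and (C) of Section~\ref{sec:scheme}: in both the regular case (C) and the finite-variation, non-zero-drift case (B) one has $\limsup_{x\to 0}h^{\lambda}(x)=1$ for every parameter, which applies in particular to $\lambda+c_Y$. The mechanism underlying this is that, $\{0\}$ being non-polar for a non-Poisson process, the $\lambda$-resolvent admits a bounded density $u^{\lambda}$; the strong Markov property at $T_{\{x\}}$ gives $h^{\lambda}(x)=u^{\lambda}(x)/u^{\lambda}(0)$, and $\limsup_{x\to 0}u^{\lambda}(x)=u^{\lambda}(0)$ because the maximal value of the resolvent density is recovered at the origin (from the drift side in case (B), by continuity in case (C)). I expect the main obstacle to be precisely this boundary behaviour at $0$: distinguishing the regular case, where $h^{\lambda}$ is continuous at $0$ with value $1$, from the finite-variation case, where $h^{\lambda}(0)<1$ and yet the value $1$ is still attained in the limit along the drift direction. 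This is the content I would borrow from Bretagnolle rather than reprove; the compound Poisson case lies genuinely outside this resolvent scheme, as there the $\lambda$-resolvent need not be absolutely continuous, and it is handled separately through $h^{\lambda}(0)=1$.
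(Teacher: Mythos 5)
Your proof of the statement in its actual scope is correct and follows essentially the same route as the paper's: the forward implication via Remark~\ref{rem:0_pol_h}, and the converse by transferring non-polarity from $X$ to $Z$ through Lemma~\ref{lem:pZ}, reducing to the one-dimensional process $Z$, and invoking Bretagnolle's trichotomy as recorded in cases (B') and (C'). The only real difference is the decoupling device. You condition on $Y$ having no jump before $T^Z_{\{v\}}$, which yields the clean pointwise bound $h^{\lambda}(v)\geq h^{Z,\lambda+\nu^Y(\Rd)}(v)$ for $v\in V$, and then apply Bretagnolle with the parameter $\lambda+\nu^Y(\Rd)$. The paper instead fixes $t>0$, works with the event $\{Y_t=0,\,T^Z_{\{x\}}<t\}$, obtains $h^{\lambda}(x)\geq \PP^0(Y_t=0)\,\PP^0(T^Z_{\{x\}}<t)\,e^{-\lambda t}$ by independence, and lets $t\to 0^+$ at the end. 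Your variant is, if anything, tidier: the paper's identification of $T_{\{x\}}$ with $T^Z_{\{x\}}$ on $\{Y_t=0\}$ is really only valid on the smaller event that $Y$ has not jumped at all before $T^Z_{\{x\}}$ (a path of $Y$ may jump away and return by time $t$), and that smaller event is exactly the one you use.

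However, one of your ``trivial cases'' contains a genuine error. The lemma is stated and proved in the paper only for $X$ of the form \eqref{eq:descr_X} (the whole block of lemmas follows the sentence ``We translate (A'), (B') and (C') into $X$ given by \eqref{eq:descr_X}''), so the compound Poisson case is not part of the statement; and it cannot be, because for compound Poisson processes the equivalence is false under the standard deleted-neighbourhood reading of $\limsup_{x\to 0}$. Take a compound Poisson process in $\R$ with jumps concentrated on $\ZZ$: then $\{0\}$ is not polar (under $\PP^0$ the process sits at $0$ for a positive time, so $T_{\{0\}}=0$), while $h^{\lambda}(x)=0$ for every non-lattice $x$, whence $\limsup_{x\to 0,\,x\neq 0}h^{\lambda}(x)=0$. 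The value $h^{\lambda}(0)=1$ does not ``enter the limit superior at the origin.'' That the deleted reading is the intended one is visible from case (B) of the classification and from Corollary~\ref{lem:H3}, where what matters is $h_3(X^{\lambda})=\sup_{x\neq 0}h^{\lambda}(x)$; this is precisely why the paper's Remark~\ref{rem:Levy_hlambda} is restricted to non-Poisson processes. So you should simply drop that case as lying outside the lemma's scope rather than claim it is settled by $h^{\lambda}(0)=1$; with that correction your proposal is sound.
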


\begin{proof}
If $\limsup_{x\to 0} h^{\lambda}(x)=1$, then $h^{\lambda}(x)>0$ for some $x\in\Rd$ and $\PP^0(T_{\{x\}}<\infty)>0$. Conversely, if $\{0\}$ is not polar for $X$ then by Lemma \ref{lem:pZ} it is not polar for $Z$ and $\limsup_{v\in V, v\to 0} h^{Z,\lambda}(v)=1$. This implies $\limsup_{v\in V, v\to 0} \PP^0(T_{\{v\}}^Z<t)=1$ for every fixed $t>0$. Thus we have for $t>0$
\begin{align*}
h^{\lambda}(x)
&\geq \EE^0\left( Y_t=0,T_{\{x\}}^Z<t;e^{-\lambda T_{\{x\}}}\right)
= \EE^0\left( Y_t=0, T_{\{x\}}^Z<t;e^{-\lambda T_{\{x\}}^Z}\right)\\
&\geq \PP^0(Y_t=0)\PP^0(T_{\{x\}}^Z<t) e^{-\lambda t}\,,
\end{align*}
which gives $\limsup_{x\to 0} h^{\lambda}(x)\geq \PP^0(Y_t=0) e^{-\lambda t}$.
Finally, we let $t\to 0^+$.
\end{proof}

\begin{lemma}
$0$ is regular for $\{0\}$ for $X$ if and only if
$0$ is regular for $\{0\}$ for
$Z$.
\end{lemma}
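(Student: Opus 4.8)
The plan is to localise everything to the (random) time interval before the first jump of $Y$, where $X$ and $Z$ coincide, and to exploit that regularity for $\{0\}$ is a purely small-time property. By Remark~\ref{rem:0_reg_h} the assertion is equivalent to $h^{\lambda}(0)=1\iff h^{Z,\lambda}(0)=1$, but I would argue directly with hitting times. By Proposition~\ref{Poi+nor} the process $Y$ is either identically zero or a compound Poisson process with L\'evy measure vanishing on $V$. In the degenerate case $Y\equiv 0$ we have $X=Z$ and there is nothing to prove, so assume $Y$ is compound Poisson and set $\sigma=\inf\{t>0\colon Y_t\neq 0\}$. Since a compound Poisson process started at $0$ is frozen at $0$ until its first jump, and that first jump time is exponentially distributed with the finite rate $\nu^Y(\Rd)$, we have $\PP^0(\sigma>0)=1$ and $Y_s=0$ for every $s\in[0,\sigma)$. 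Hence $\PP^0$-a.s. $X_s=Z_s$ for all $s\in[0,\sigma)$, and this single observation drives both implications.

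First I would show that regularity of $Z$ forces regularity of $X$. If $0$ is regular for $\{0\}$ for $Z$, then $\PP^0(T_{\{0\}}^Z=0)=1$, so $\PP^0$-a.s. there is a sequence $s_n\downarrow 0$ with $Z_{s_n}=0$. As $\sigma>0$ a.s., we have $s_n<\sigma$ for all sufficiently large $n$, whence $X_{s_n}=Z_{s_n}=0$ and therefore $T_{\{0\}}=0$ a.s. For the converse, assume $0$ is regular for $\{0\}$ for $X$, so $\PP^0$-a.s. there is $s_n\downarrow 0$ with $X_{s_n}=0$; again $s_n<\sigma$ for large $n$, so $Z_{s_n}=X_{s_n}-Y_{s_n}=X_{s_n}=0$, giving $T_{\{0\}}^Z=0$ a.s. Because the event $\{T_{\{0\}}^Z=0\}$ depends only on the path of $Z$, its full probability under the joint law yields regularity of $Z$ at $0$, and independence of $Y$ and $Z$ plays no further role.

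The only delicate point, and what I would regard as the heart of the matter, is the strict positivity $\PP^0(\sigma>0)=1$: it is precisely this that localises the comparison to arbitrarily small times, so that before $\sigma$ the summand $Y$ cannot create or destroy visits of $X$ to $\{0\}$, and regularity — being an infinitesimal small-time property — transfers in both directions. No appeal to the finer structure of $Z$ or to Bretagnolle's classification is needed. As a cross-check for one direction, one can instead specialise the inequality established in the proof of Lemma~\ref{lem:Levy_hlambda} to $x=0$: if $T_{\{0\}}^Z=0$ a.s. then $h^{\lambda}(0)\geq \PP^0(Y_t=0)\,e^{-\lambda t}$ for all $t>0$, and letting $t\to 0^+$ with $\PP^0(Y_t=0)\to 1$ gives $h^{\lambda}(0)=1$, i.e. regularity of $X$ via Remark~\ref{rem:0_reg_h}.
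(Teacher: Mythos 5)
Your proposal is correct and follows essentially the same route as the paper: the paper's proof consists precisely of observing that the set $\{Y_s=0 \mbox{ for all } s\in[0,\delta] \mbox{ for some } \delta>0\}$ has $\PP^0$-measure one (your $\PP^0(\sigma>0)=1$) and that on this set $T_{\{0\}}=0$ if and only if $T_{\{0\}}^Z=0$, which is exactly your two-way transfer of visits to $\{0\}$ along sequences $s_n\downarrow 0$. Your version just spells out the details (and adds an optional cross-check via the inequality from Lemma~\ref{lem:Levy_hlambda}) that the paper leaves implicit.
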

\begin{proof}
We observe that
the set $\{Y_s=0 \mbox{ for all } s\in [0,\delta] \mbox{ for some } \delta>0\}$ is of measure one with respect to $\PP^0$.
On that set $T_{\{0\}}=0$ if and only if $T_{\{0\}}^Z=0$.
\end{proof}

\begin{corollary}\label{lem:naX}
For the process $X$ of the form \eqref{eq:descr_X}  we have
\begin{enumerate}
\item[]
\begin{enumerate}
\item[{\rm (A')}] $h^{\lambda}(x)=0$, $x\in\Rd$,
\item[{\rm (B')}] $h^{\lambda}(0)<\limsup_{x\to 0}h^{\lambda}(x)=1$,
\item[{\rm (C')}] $h^{\lambda}(0)=\limsup_{x\to 0}h^{\lambda}(x)=1$,
\end{enumerate}
\end{enumerate}
and
\begin{enumerate}
\item[]
\begin{enumerate}
\item[{\rm (A')}] $\{0\}$ is polar,
\item[{\rm (B')}] X has finite variation and non-zero drift (see Remark \ref{rem:var}),
\item[{\rm (C')}] $0$ is regular for $\{0\}$.
\end{enumerate}
\end{enumerate}
\end{corollary}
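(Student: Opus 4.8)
The plan is to argue case by case over the trichotomy (A'), (B'), (C') attached in Section~\ref{sec:scheme} to the one-dimensional non-Poisson component $Z$, transferring in each case the description of $h^{Z,\lambda}$ and the associated probabilistic property to a statement about $X$. The transfer tools are Lemma~\ref{lem:pZ} (polarity of $\{0\}$ for $X$ is equivalent to polarity for $Z$), the unlabeled lemma immediately preceding this corollary (regularity of $0$ for $\{0\}$ for $X$ is equivalent to regularity for $Z$), and Lemma~\ref{lem:Levy_hlambda} (non-polarity of $\{0\}$ for $X$ is equivalent to $\limsup_{x\to 0}h^{\lambda}(x)=1$), together with Remarks~\ref{rem:0_pol_h} and~\ref{rem:0_reg_h}, which read off polarity and regularity from the values of $h^{\lambda}$.

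In case (A'), $\{0\}$ is polar for $Z$, so by Lemma~\ref{lem:pZ} it is polar for $X$, and Remark~\ref{rem:0_pol_h} gives $h^{\lambda}(x)=0$ for every $x\in\Rd$; this yields both descriptions. In case (C'), $0$ is regular for $\{0\}$ for $Z$, hence for $X$ by the regularity lemma, so $h^{\lambda}(0)=1$ by Remark~\ref{rem:0_reg_h}; since regularity of $0$ precludes $\{0\}$ being polar, Lemma~\ref{lem:Levy_hlambda} gives $\limsup_{x\to 0}h^{\lambda}(x)=1$, and the two equalities combine into (C'). These two cases are pure lemma-chaining and carry no genuine difficulty.

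Case (B') splits into two observations plus a triplet computation. First, here $h^{Z,\lambda}\not\equiv 0$ (its $\limsup$ at $0$ equals $1$), so $\{0\}$ is not polar for $Z$ by Remark~\ref{rem:0_pol_h}, hence not polar for $X$ by Lemma~\ref{lem:pZ}, and Lemma~\ref{lem:Levy_hlambda} yields $\limsup_{x\to 0}h^{\lambda}(x)=1$. Second, $0$ is not regular for $\{0\}$ for $Z$, hence not for $X$ by the regularity lemma, so by Remark~\ref{rem:0_reg_h} we have $h^{\lambda}(0)\neq 1$, i.e. $h^{\lambda}(0)<1$ since $h^{\lambda}\leq 1$; together these give the $h^{\lambda}$-description in (B'). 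It remains to obtain the probabilistic description, the one point that calls for a computation rather than a reference. There $Z$ has finite variation with drift $\gamma_0^Z\neq 0$, while $Y$ is compound Poisson, hence of finite variation with zero drift and finite L\'evy measure; writing $X=Y+Z$ with $Y$, $Z$ independent, I would add the triplets to obtain $A^X=A^Y+A^Z=0$, $\nu^X=\nu^Y+\nu^Z$ with $\int_{\Rd}(|x|\wedge 1)\,\nu^X(dx)<\infty$, and drift $\gamma_0^X=\gamma_0^Y+\gamma_0^Z=\gamma_0^Z\neq 0$, so that $X$ has finite variation and non-zero drift by Remark~\ref{rem:var}. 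The only real obstacle will be this last verification, namely confirming that the finite-variation/non-zero-drift property survives adjoining the independent compound Poisson summand $Y$, which reduces to the fact that $Y$ contributes a zero drift.
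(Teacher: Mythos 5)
Your proposal is correct and follows essentially the same route the paper intends: the corollary is stated without proof precisely because it is the lemma-chain you describe (Lemma~\ref{lem:pZ}, Lemma~\ref{lem:Levy_hlambda}, the regularity lemma, and Remarks~\ref{rem:0_pol_h}--\ref{rem:0_reg_h}), with the (B') triplet computation already encoded in Proposition~\ref{Poi+nor} and Remark~\ref{rem:var}. Your explicit verification that $\gamma_0^X=\gamma_0^Z\neq 0$ after adjoining the compound Poisson summand is exactly the intended use of those two results.
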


\noindent
The last observation facilitates a discussion of \eqref{H3} in the next subsection.
\begin{remark}\label{rem:Levy_hlambda}
For a non-Poisson L{\'e}vy process we have $\limsup_{x\to 0} h^{\lambda}(x)=1$ or $h^{\lambda}(x)=0$, $x\in\Rd$.
\end{remark}

\subsection{Hypotheses (H1)-(H3)}

We start with a general case of a Hunt process $X$ on $S$ with life-time  $\zeta$.
In the proofs of Lemma~\ref{lem:h1h2_sub} and \ref{lem:h3_sub} all objects corresponding to $X^{\lambda}$, the $\lambda$-subprocess of $X$, are indicated with a bar, e.g., $\overline{T}_B=\inf \{ t>0 \colon X_t^{\lambda}\in B \}$.
\begin{lemma}\label{lem:h1h2_sub}
Let $\lambda>0$. We have $h_1(X^\lambda)=h_1(X)$ and $h_2(X^\lambda)=h_2(X)$.
\end{lemma}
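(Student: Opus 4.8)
The plan is to reduce both equalities to a single elementary identity for the exit times of the subprocess. Recall that $X^{\lambda}$ is $X$ killed at an independent exponential time $\mathbf{e}_\lambda$ of parameter $\lambda$, so that its first exit time from a ball is
\begin{align*}
\overline{\tau}_{B(x,r)}=\tau_{B(x,r)}\wedge \mathbf{e}_\lambda\,.
\end{align*}
Since $\mathbf{e}_\lambda$ is independent of the path of $X$ and has a continuous law with $\PP(\mathbf{e}_\lambda>t)=\PP(\mathbf{e}_\lambda\geq t)=e^{-\lambda t}$, using $\{\tau\wedge\mathbf{e}_\lambda>t\}=\{\tau>t\}\cap\{\mathbf{e}_\lambda>t\}$ I obtain
\begin{align*}
\PP^x\big(\overline{\tau}_{B(x,r)}>t\big)=e^{-\lambda t}\,\PP^x\big(\tau_{B(x,r)}>t\big)\,,\qquad x\in S,\ r,t>0\,.
\end{align*}
Both claims follow from this by bookkeeping with monotonicity in $t$.

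For $h_1$, I would write $g_1(t)=\inf_{r>0}\sup_{x\in S}\PP^x(\tau_{B(x,r)}>t)$. Because the factor $e^{-\lambda t}$ depends on neither $x$ nor $r$, it pulls out of $\inf_r\sup_x$, so $\inf_r\sup_x\PP^x(\overline{\tau}_{B(x,r)}>t)=e^{-\lambda t}g_1(t)$ and hence $h_1(X^\lambda)=\sup_{t>0}e^{-\lambda t}g_1(t)$. As $e^{-\lambda t}\leq 1$ this is at most $\sup_{t>0}g_1(t)=h_1(X)$. For the reverse inequality, $t\mapsto\PP^x(\tau_{B(x,r)}>t)$ is non-increasing, hence so is $g_1$, so $\sup_{t>0}g_1(t)=\lim_{t\to 0^+}g_1(t)$; letting $t\to 0^+$ in $e^{-\lambda t}g_1(t)$ (where $e^{-\lambda t}\to 1$) recovers the same value, giving $h_1(X^\lambda)\geq h_1(X)$. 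Thus $h_1(X^\lambda)=h_1(X)$; note that here one genuinely needs the supremum over $t$, since $e^{-\lambda t}g_1(t)<g_1(t)$ for each $t>0$.

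For $h_2$ I would argue at a fixed $r$, which in fact yields equality already before taking $\sup_r$. Put $g_2(r,t)=\sup_x\PP^x(\tau_{B(x,r)}<t)$; passing to complements in the identity above gives
\begin{align*}
\sup_x\PP^x\big(\overline{\tau}_{B(x,r)}<t\big)=1-e^{-\lambda t}\inf_x\PP^x\big(\tau_{B(x,r)}\geq t\big)=1-e^{-\lambda t}\big(1-g_2(r,t)\big)=:\overline{g}_2(r,t)\,.
\end{align*}
Since $e^{-\lambda t}\leq 1$ we have $\overline{g}_2(r,t)\geq g_2(r,t)$ for every $t$, whence $\inf_t\overline{g}_2(r,t)\geq\inf_t g_2(r,t)$. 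Conversely, $t\mapsto g_2(r,t)$ is non-decreasing, so $\inf_t g_2(r,t)=\lim_{t\to 0^+}g_2(r,t)$; letting $t\to 0^+$ in $\overline{g}_2(r,t)$ gives $\lim_{t\to 0^+}\overline{g}_2(r,t)=\lim_{t\to 0^+}g_2(r,t)$, so $\inf_t\overline{g}_2(r,t)\leq\inf_t g_2(r,t)$. Hence $\inf_t\overline{g}_2(r,t)=\inf_t g_2(r,t)$ for every $r$, and taking $\sup_r$ yields $h_2(X^\lambda)=h_2(X)$.

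The only genuinely delicate point is the first paragraph: justifying that exit from a ball for the subprocess is exactly ``exit or be killed'', so that $\overline{\tau}_{B(x,r)}=\tau_{B(x,r)}\wedge\mathbf{e}_\lambda$ with $\mathbf{e}_\lambda$ independent of $\tau_{B(x,r)}$, which produces the clean product $e^{-\lambda t}\PP^x(\tau_{B(x,r)}>t)$. I expect the sup/inf interchange in the $h_2$ computation — passing from $\sup_x\PP^x(\overline{\tau}_{B(x,r)}<t)$ to $1-e^{-\lambda t}\inf_x\PP^x(\tau_{B(x,r)}\geq t)$ via complements — to be the one spot requiring a little care; everything else is monotonicity in $t$ and factoring $e^{-\lambda t}$ out of the suprema and infima over $x$ and $r$.
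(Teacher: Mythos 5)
Your proof hinges on the identity $\overline{\tau}_{B(x,r)}=\tau_{B(x,r)}\wedge \mathbf{e}_\lambda$, and this is exactly where it fails in the paper's framework. The paper defines the exit time as a hitting time, $\tau_B=T_{S\setminus B}$ with $T_A=\inf\{t>0\colon X_t\in A\}$ and $\inf\emptyset=\infty$, and the $\lambda$-subprocess takes values in $S$ with a cemetery point adjoined that does \emph{not} belong to $S\setminus B$. Hence if the killing time $\mathbf{e}_\lambda$ occurs while the path is still inside the ball, the subprocess never enters $S\setminus B$ at all, so $\overline{\tau}_B=\infty$, not $\mathbf{e}_\lambda$. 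The correct identification is $\overline{\tau}_B=\tau_B$ on $\{\mathbf{e}_\lambda>\tau_B\}$ and $\overline{\tau}_B=\infty$ on $\{\mathbf{e}_\lambda\leq\tau_B\}$ (this is the paper's opening remark ``Recall that $\inf\emptyset=\infty$'' and its product-space decomposition), which gives
\begin{align*}
\overline{\PP}^x\big(\overline{\tau}_B>t\big)=\PP^x(\tau_B>t)+\EE^x\big(\tau_B\leq t;\,1-e^{-\lambda\tau_B}\big),
\qquad
\overline{\PP}^x\big(\overline{\tau}_B<t\big)=\EE^x\big(\tau_B<t;\,e^{-\lambda\tau_B}\big).
\end{align*}
These lie on the \emph{opposite} side of $\PP^x(\tau_B>t)$, resp.\ $\PP^x(\tau_B<t)$, from your formulas: killing inside the ball makes exiting harder, so $\overline{\PP}^x(\overline{\tau}_B>t)\geq\PP^x(\tau_B>t)$ and $\overline{\PP}^x(\overline{\tau}_B<t)\leq\PP^x(\tau_B<t)$, whereas your product formula $e^{-\lambda t}\PP^x(\tau_B>t)$ and your inequality $\overline{g}_2\geq g_2$ assert the reverse. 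This is not a harmless choice of convention within the paper: the lemma is used to verify Zhao's hypotheses \eqref{H1}--\eqref{H2} for $X^\lambda$, and those hypotheses are stated with the same convention $\tau_B=T_{S\setminus B}$, so a proof for the ``exit or be killed'' exit time proves a statement about different quantities.

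The good news is that your limiting skeleton is sound and is essentially the paper's: since the relevant probabilities are monotone in $t$, the outer $\sup_{t>0}$ in $h_1$ (resp.\ $\inf_{t>0}$ in $h_2$) may be replaced by $\limsup_{t\to 0^+}$ (resp.\ $\liminf_{t\to 0^+}$), so only the behaviour as $t\to 0^+$ matters. With the corrected formulas one gets the two-sided bounds
\begin{align*}
\PP^x(\tau_B>t)\leq\overline{\PP}^x\big(\overline{\tau}_B>t\big)\leq\PP^x(\tau_B>t)+1-e^{-\lambda t},
\qquad
\PP^x(\tau_B<t)\geq\overline{\PP}^x\big(\overline{\tau}_B<t\big)\geq\PP^x(\tau_B<t)-\big(1-e^{-\lambda t}\big),
\end{align*}
and the error $1-e^{-\lambda t}$ vanishes in the limit, yielding $h_1(X^\lambda)=h_1(X)$ and $h_2(X^\lambda)=h_2(X)$. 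So the conclusion you reach is correct (it is insensitive to the convention precisely because only $t\to 0^+$ matters), but the first paragraph of your argument --- which you yourself flagged as the one delicate point --- is false in the paper's setting and must be replaced by the decomposition above.
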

\begin{proof}
Recall that $\inf \emptyset =\infty$. For any Borel set $B$ in $S$ and $t>0$ we have
$\{\overline{\tau}_B>t\}=
{\{\tau_B>t\}\times [0,\infty)}\, \dot\cup\, \{\tau_B\leq t\}\times [0,\tau_B)$
and $\{\overline{\tau}_B<t\}=\{\tau_B<t\}\times (\tau_B,\infty)$. Thus,
\begin{align*}
\overline{\PP}^x\left(\overline{\tau}_B >t\right)&
= \PP^x \left(\tau_B>t \right)+ \EE^x \left(\tau_B\leq t; 1-e^{-\lambda \tau_B}  \right)
\leq \PP^x \left(\tau_B>t \right)+ 1-e^{-\lambda t} \,,
\end{align*}
and
\begin{align*}
\overline{\PP}^x\left(\overline{\tau}_B <t\right)&=\EE^x\left(\tau_B<t;\, e^{-\lambda \tau_{B}} \right)
= \PP^x \left(\tau_B<t \right)+ \EE^x \left(\tau_B< t;\,e^{-\lambda \tau_B}-1 \right)\\
&\geq \PP^x \left(\tau_B<t \right)+e^{-\lambda t}-1\,.
\end{align*}
Since we may change $\sup_{t>0}$ with $\limsup_{t\to 0^+}$, $h_1(X)\leq h_1(X^{\lambda})\leq h_1(X)+\lim_{t\to 0^+}(1-e^{-\lambda t})$ and since we may replace $\inf_{t>0}$ with $\liminf_{t\to 0^+}$, $h_2(X)\geq h_2(X^{\lambda})\geq h_2(X)+\lim_{t\to 0^+}(e^{-\lambda t}-1)$. This ends the proof.
\end{proof}

\begin{lemma}\label{lem:h3_sub}
Let $\lambda>0$. We have $h_3(X^{\lambda})\leq h_3(X)$, more precisely
$$
h_3(X^{\lambda})=\sup_{u>0} \inf_{r>0} \sup_{\substack{x,\,y\in S\\\rho(x,y)\geq u}} \EE^y (T_{B(x,r)}<\zeta;e^{-\lambda T_{B(x,r)}})\,.
$$
\end{lemma}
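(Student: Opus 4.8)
The plan is to compute the $\lambda$-subprocess first-hitting time $\overline{T}_B$ explicitly in terms of the original process, exactly as was done for exit times $\overline{\tau}_B$ in Lemma~\ref{lem:h1h2_sub}. Recall that the $\lambda$-subprocess $X^\lambda$ runs the original process $X$ but is killed at an independent exponential time $\xi$ with parameter $\lambda$; equivalently, one augments the state space with a time-clock and the subprocess survives up to time $t$ with probability $e^{-\lambda t}$. First I would express the event $\{\overline{T}_B < \overline{\zeta}\}$, where $\overline{\zeta}$ is the life-time of $X^\lambda$. Writing $T_B$ for the hitting time of $B$ by $X$ and $\xi$ for the independent killing clock, the subprocess hits $B$ (before being killed) precisely when $T_B < \xi$. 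This is the analogue of the decomposition $\{\overline{\tau}_B<t\}=\{\tau_B<t\}\times(\tau_B,\infty)$ used in the previous lemma.

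The key computation is then
\begin{align*}
\overline{\PP}^y\left(\overline{T}_{B(x,r)}<\overline{\zeta}\right)
&=\PP^y\otimes\PP(T_{B(x,r)}<\xi<\zeta)
=\EE^y\left(T_{B(x,r)}<\zeta;\, e^{-\lambda T_{B(x,r)}}\right),
\end{align*}
where I integrate out the independent exponential variable $\xi$ using $\PP(\xi>s)=e^{-\lambda s}$, conditioning on the value of $T_{B(x,r)}$ and on the event $\{T_{B(x,r)}<\zeta\}$. Substituting this identity into the definition~\eqref{H3} of $h_3$ applied to $X^\lambda$ gives exactly the displayed formula for $h_3(X^\lambda)$.

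The inequality $h_3(X^\lambda)\leq h_3(X)$ is then immediate: since $e^{-\lambda T_{B(x,r)}}\leq 1$, we have $\EE^y(T_{B(x,r)}<\zeta;e^{-\lambda T_{B(x,r)}})\leq \PP^y(T_{B(x,r)}<\zeta)$, and this bound is preserved under the $\sup_x\inf_r\sup_{x,y}$ operations appearing in the definition of $h_3$. I expect the only delicate point to be the bookkeeping of the life-time: one must check that killing $X$ at the exponential clock $\xi$ correctly identifies $\overline{\zeta}=\zeta\wedge\xi$ and that the event $\{\overline{T}_B<\overline{\zeta}\}$ really translates to $\{T_B<\xi\wedge\zeta\}$, so that the $T_B<\zeta$ constraint survives in the final expectation. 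Once the definition of $X^\lambda$ from \cite{MR0264757} is invoked to justify this translation, the Fubini/conditioning step integrating out $\xi$ is routine.
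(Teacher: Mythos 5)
Your approach is the same as the paper's: realize $X^{\lambda}$ as $X$ killed at an independent exponential clock, identify the event $\{\overline{T}_B<\overline{\zeta}\}$, and integrate out the clock — the paper phrases this on the product space as $\{\overline{T}_{B}<\overline{\zeta}\}=\{T_B<\zeta\}\times (T_B,\infty)$, which is exactly your killing-time picture.

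One correction is needed in your displayed computation: the event is written there as $\{T_{B(x,r)}<\xi<\zeta\}$, which additionally demands $\xi<\zeta$ and, taken literally, has probability $\EE^y\left(T_{B(x,r)}<\zeta;\,e^{-\lambda T_{B(x,r)}}-e^{-\lambda \zeta}\right)$, not the stated right-hand side; the two differ whenever $\PP^y(\zeta<\infty)>0$, which is allowed in the general Hunt setting of this lemma. The correct event is $\{T_{B(x,r)}<\xi\wedge\zeta\}=\{T_{B(x,r)}<\zeta\}\cap\{\xi>T_{B(x,r)}\}$ — precisely what you state in your closing paragraph — and integrating out $\xi$ over $\{\xi>T_{B(x,r)}\}$ then yields $\EE^y\left(T_{B(x,r)}<\zeta;\,e^{-\lambda T_{B(x,r)}}\right)$ as claimed. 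With that display fixed, the proof is correct and matches the paper's one-line argument, including the trivial deduction of $h_3(X^{\lambda})\leq h_3(X)$ from $e^{-\lambda T_{B(x,r)}}\leq 1$.
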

\begin{proof}
For any Borel set $B$ in $S$ we have $\{\overline{T}_{B}<\overline{\zeta}\}=\{T_B<\zeta\}\times (T_B,\infty)$. This results in $\overline{\PP}^y(\overline{T}_{B}<\overline{\zeta})=\EE^y(T_B<\zeta;e^{-\lambda T_B})$.
\end{proof}

Now, let $S=\Rd$ be the Euclidean space and $\zeta=\infty$.
The following lemmas and corollary address the question whether $h_3(X^{\lambda})=\sup_{u>0} \inf_{r>0} \sup\limits_{|x-y|\geq u} \EE^y e^{-\lambda T_{B(x,r)}}<1$.

\begin{lemma}\label{lem:point}
Let $x\in\Rd$ be fixed. Then
\begin{align}\label{eq:point}
\lim_{r\to 0^+} T_{\overline{B}(x,r)}=T_{\{x\}}\qquad \PP^0 a.s.
\end{align}
\end{lemma}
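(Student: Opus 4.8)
The plan is to combine the elementary monotonicity of the hitting times $T_{\overline{B}(x,r)}$ with the quasi-left-continuity of the L\'evy (hence Hunt) process $X$. First I would note that the closed balls $\overline{B}(x,r)$ decrease to $\{x\}$ as $r\downarrow 0$, so that $r\mapsto T_{\overline{B}(x,r)}$ is non-increasing and the limit $L:=\lim_{r\to 0^+}T_{\overline{B}(x,r)}=\sup_{r>0}T_{\overline{B}(x,r)}$ exists in $[0,\infty]$, $\PP^0$-almost surely; each $T_{\overline{B}(x,r)}$ is a stopping time, being the first hitting time of a closed set by a path that is right-continuous with left limits. Since $\{x\}\subseteq\overline{B}(x,r)$ for every $r>0$, one has $T_{\overline{B}(x,r)}\leq T_{\{x\}}$ and therefore $L\leq T_{\{x\}}$; the entire content of the lemma is thus the reverse inequality $T_{\{x\}}\leq L$.

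To establish $T_{\{x\}}\leq L$ I would work on the event $\{L<\infty\}$, since on $\{L=\infty\}$ the bound $L\leq T_{\{x\}}$ already forces $T_{\{x\}}=\infty=L$. Fixing a sequence $r_n\downarrow 0$ one has $T_{\overline{B}(x,r_n)}\uparrow L$, and because $\overline{B}(x,r_n)$ is closed and the paths are right-continuous, $X_{T_{\overline{B}(x,r_n)}}\in\overline{B}(x,r_n)$ on $\{T_{\overline{B}(x,r_n)}<\infty\}$, whence $X_{T_{\overline{B}(x,r_n)}}\to x$. Now I would apply quasi-left-continuity to the increasing sequence of stopping times $T_{\overline{B}(x,r_n)}\uparrow L$, which yields $X_{T_{\overline{B}(x,r_n)}}\to X_L$ almost surely on $\{L<\infty\}$. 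Comparing the two limits gives $X_L=x$ almost surely on $\{L<\infty\}$, so that $L\in\{t>0:X_t=x\}$ (here $L>0$ whenever the starting point $0$ differs from $x$), and hence $T_{\{x\}}\leq L$; combined with the previous paragraph this gives $L=T_{\{x\}}$.

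The main obstacle is exactly the passage from $X_{T_{\overline{B}(x,r_n)}}\to x$ to $X_L=x$. The convergence holds only along the approximating hitting times, and no purely pathwise reasoning can exclude the scenario in which the trajectory approaches $x$ from the left and then jumps away at the limit instant, i.e.\ $X_{L^-}=x$ while $X_L\neq x$. This is precisely where the probabilistic structure must be used: since $L$ is announced by the stopping times $T_{\overline{B}(x,r_n)}<L$, quasi-left-continuity of the Hunt process guarantees that $X$ has no jump at $L$ on a set of full $\PP^0$-measure, so that $X_L=X_{L^-}=x$. I would therefore take care to invoke quasi-left-continuity in the exact form needed, namely continuity of $X$ along increasing sequences of stopping times on $\{L<\infty\}$, a property enjoyed by every L\'evy process.
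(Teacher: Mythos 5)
Your proof is correct and follows essentially the same route as the paper's: monotonicity gives the limit $L\leq T_{\{x\}}$, right-continuity forces $X_{T_{\overline{B}(x,r)}}\to x$, and quasi-left-continuity along the increasing stopping times gives $X_{T_{\overline{B}(x,r)}}\to X_L$ on $\{L<\infty\}$, whence $X_L=x$ and $L\geq T_{\{x\}}$. Your parenthetical care that $L>0$ when $x\neq 0$ (needed because $T_{\{x\}}=\inf\{t>0:X_t=x\}$ only counts strictly positive times) is actually a point the paper's own proof glosses over, and it is the reason the statement should be read as aimed at $x\neq 0$, the only case in which the paper ever uses it.
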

\begin{proof}
Fix $x\in\Rd$. Define the stopping times $T_r=T_{\overline{B}(x,r)}$ and $T=\lim_{r\to 0^+}T_r$, $r>0$. Obviously, $T_r\leq T\leq T_{\{x\}}$. It suffices to consider \eqref{eq:point} on the set $\{T<\infty\}$, otherwise both sides of \eqref{eq:point} are infinite.
Since $T_r$ is non-increasing in $r>0$ we have  by the quasi-left continuity $\lim_{r\to 0^+} X_{T_r}=X_{T}$\, a.s. on $\{T<\infty\}$.  
On the other hand, by the right continuity we have $X_{T_r}\in \overline{B}(x,r)$ and thus $\lim_{r\to 0^+}X_{T_r}=x$ a.s. on $\{T<\infty\}$.
Finally, $X_T=x$ and consequently $T\geq T_{\{x\}}$ a.s. on $\{T<\infty\}$.
\end{proof}

\begin{lemma}\label{lem:tau}
Let $\tau_n=\tau_{B(0,n)}$. Then $\lim_{n\to\infty}\tau_n=\infty$ $\PP^0$  a.s.
\end{lemma}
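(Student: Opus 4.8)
The plan is to exploit the monotonicity of exit times together with the boundedness of c\`adl\`ag paths on compact time intervals. First I would note that since $B(0,n)\subseteq B(0,n+1)$, exiting a larger ball cannot happen sooner, so $\tau_n\leq \tau_{n+1}$ and the limit $\tau_\infty:=\lim_{n\to\infty}\tau_n$ exists in $[0,\infty]$ pointwise on $\Omega$. It then suffices to show $\PP^0(\tau_\infty<\infty)=0$, which I would establish by deriving a contradiction on the event $\{\tau_\infty<\infty\}$.

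The two ingredients I would assemble are as follows. On $\{\tau_n<\infty\}$ one has $|X_{\tau_n}|\geq n$: indeed $\tau_n=\inf\{t>0\colon |X_t|\geq n\}$, and if $\tau_n$ did not itself belong to this set there would be $s_k\downarrow \tau_n$ with $|X_{s_k}|\geq n$, whence by right-continuity $|X_{\tau_n}|=\lim_k|X_{s_k}|\geq n$, a contradiction; so $|X_{\tau_n}|\geq n$ in either case. The second ingredient is that $\PP^0$-almost every trajectory $t\mapsto X_t$ is c\`adl\`ag and is therefore bounded on every compact interval.

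Putting these together, I would fix $\omega$ in $\{\tau_\infty<\infty\}$ and set $L=\tau_\infty(\omega)<\infty$. Then $\tau_n(\omega)\leq L$ for all $n$, so every point $X_{\tau_n(\omega)}(\omega)$ lies in the image of the compact interval $[0,L]$ under the path, which is a bounded set; yet $|X_{\tau_n(\omega)}(\omega)|\geq n\to\infty$, which is impossible. Hence $\{\tau_\infty<\infty\}$ is $\PP^0$-null and $\tau_\infty=\infty$ a.s. The only step requiring real care — the main obstacle — is the verification that $|X_{\tau_n}|\geq n$ at the exit time, which rests on right-continuity of the paths and the openness of $B(0,n)$. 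Alternatively, one could replace the boundedness argument by quasi-left-continuity as used in Lemma~\ref{lem:point}: applying it to the increasing sequence $\tau_n\uparrow\tau_\infty$ yields $X_{\tau_n}\to X_{\tau_\infty}$ with $X_{\tau_\infty}$ finite on $\{\tau_\infty<\infty\}$, again contradicting $|X_{\tau_n}|\geq n$.
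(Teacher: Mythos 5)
Your proof is correct, and your main argument takes a more elementary route than the paper's. The paper applies quasi-left continuity to the non-decreasing sequence $\tau_n$: on $\{\tau<\infty\}$ it gets $X_{\tau_n}\to X_{\tau}$ a.s., which is incompatible with the key inequality $|X_{\tau_n}|\geq n$ obtained from right continuity. Your main argument reaches the same contradiction but replaces the probabilistic input (quasi-left continuity) by a purely deterministic one: a c\`adl\`ag path is bounded on every compact time interval, so on $\{\tau_\infty<\infty\}$ the points $X_{\tau_n}$, with $\tau_n\leq \tau_\infty$, lie in a bounded set, contradicting $|X_{\tau_n}|\geq n\to\infty$. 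What this buys: quasi-left continuity is a genuine property of the process (valid for L\'evy and, more generally, Hunt processes, but not for an arbitrary process with c\`adl\`ag paths), whereas local boundedness of c\`adl\`ag paths is automatic; so your argument is both more elementary and more widely applicable, and it also spells out the verification of $|X_{\tau_n}|\geq n$ at the exit time, which the paper asserts with only the words ``by the right continuity''. Your closing alternative --- applying quasi-left continuity to $\tau_n\uparrow\tau_\infty$ as in Lemma~\ref{lem:point} --- is exactly the paper's proof.
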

\begin{proof}
Denote $\tau=\lim_{n\to\infty}\tau_n$. Since $\tau_n$ is non-decreasing, by the quasi-left continuity $X_{\tau_n} \xrightarrow {n\to \infty} X_{\tau}$ a.s. on $\{\tau<\infty\}$. On $\{\tau<\infty\}$ for $n\geq |X_{\tau}|+1$ by the right continuity we have $|X_{\tau_n}|\geq |X_\tau|+1$, which is a contradiction; it shows that a.s $\tau<\infty$ does not occur.
\end{proof}

\begin{lemma}\label{lem:ZhH3}
Let $\lambda>0$.
Then
\begin{align}\label{ZhaoH3}
\sup_{u>0}\inf_{r>0}\sup_{|x|\geq u}\EE^0e^{-\lambda T_{\overline{B}(x,r)}}=\sup_{x\neq 0}\EE^0e^{-\lambda T_{\{x\}}}\,.
\end{align}
\end{lemma}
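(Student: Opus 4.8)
The plan is to exploit the monotone dependence of everything on the two radii and reduce the whole statement to the pointwise convergence $T_{\overline{B}(x,r)}\to T_{\{x\}}$ supplied by Lemma~\ref{lem:point}. Write $g_r(x)=\EE^0 e^{-\lambda T_{\overline{B}(x,r)}}$. Since $\overline{B}(x,r)$ grows with $r$, the hitting time $T_{\overline{B}(x,r)}$ is non-increasing in $r$, so $g_r(x)$ is non-decreasing in $r$; combined with Lemma~\ref{lem:point} and bounded convergence this gives, for each fixed $x$,
\[
\inf_{r>0} g_r(x)=\lim_{r\to 0^+} g_r(x)=\EE^0 e^{-\lambda T_{\{x\}}}=h^{\lambda}(x).
\]
Likewise $r\mapsto\sup_{|x|\geq u} g_r(x)$ is non-decreasing and $u\mapsto\sup_{|x|\geq u} g_r(x)$ is non-increasing, so the outer $\inf_{r>0}$ and $\sup_{u>0}$ are genuine monotone limits and I may replace them by $\lim_{r\to0^+}$ and $\lim_{u\to0^+}$. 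The lower bound ``$\geq$'' is then immediate: for $x\neq0$ and any $u\in(0,|x|)$ we have $\sup_{|x'|\geq u} g_r(x')\geq g_r(x)$, and letting $r\to0^+$ gives $\inf_{r>0}\sup_{|x'|\geq u} g_r(x')\geq h^{\lambda}(x)$; taking $\sup$ over $u$ and over $x\neq0$ yields ``$\geq$''.

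The real work is the upper bound. Set $M=\sup_{x\neq0} h^{\lambda}(x)$ and fix $u>0$; I must show $\lim_{r\to0^+}\sup_{|x|\geq u} g_r(x)\leq M$. The main obstacle is precisely that the supremum over $x$ need not commute with the limit $r\to0^+$ (hitting a small ball is strictly easier than hitting its centre), so I split $\{|x|\geq u\}$ into a far region and a compact annulus and control them separately.

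For the far region I use that a distant small ball is hard to reach uniformly in small $r$. Fix $r_0>0$; for $r\leq r_0$ monotonicity gives $g_r(x)\leq g_{r_0}(x)$, and from
\[
g_{r_0}(x)=\lambda\int_0^{\infty} e^{-\lambda t}\,\PP^0\!\left(T_{\overline{B}(x,r_0)}\leq t\right)dt\leq \lambda\int_0^{\infty} e^{-\lambda t}\,\PP^0\!\Big(\sup_{s\leq t}|X_s|\geq |x|-r_0\Big)dt
\]
dominated convergence (each càdlàg path is bounded on $[0,t]$) yields $g_{r_0}(x)\to0$ as $|x|\to\infty$. Hence, given $\varepsilon>0$, there is $R$ with $\sup_{|x|\geq R} g_r(x)\leq \varepsilon$ for all $r\leq r_0$.

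For the annulus $\{u\leq|x|\leq R\}$ I run a compactness argument anchored on Lemma~\ref{lem:point}. If $\limsup_{r\to0^+}\sup_{u\leq|x|\leq R} g_r(x)=L$, pick $r_n\to0^+$ and near-maximizers $x_n$ with $g_{r_n}(x_n)\to L$; passing to a subsequence, $x_n\to x_\ast$ with $u\leq|x_\ast|\leq R$, so in particular $x_\ast\neq0$. Since $\overline{B}(x_n,r_n)\subseteq\overline{B}(x_\ast,\rho_n)$ with $\rho_n=|x_n-x_\ast|+r_n\to0$, monotonicity of hitting times in the set gives $g_{r_n}(x_n)\leq \EE^0 e^{-\lambda T_{\overline{B}(x_\ast,\rho_n)}}$, and by Lemma~\ref{lem:point} and bounded convergence the right-hand side tends to $h^{\lambda}(x_\ast)\leq M$; thus $L\leq M$. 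Combining the two regions, for $r\leq r_0$ we get $\sup_{|x|\geq u} g_r(x)\leq\max\big(\sup_{u\leq|x|\leq R} g_r(x),\varepsilon\big)$, so $\limsup_{r\to0^+}\sup_{|x|\geq u} g_r(x)\leq\max(M,\varepsilon)$; letting $\varepsilon\to0$ when $M=0$ and choosing $\varepsilon<M$ otherwise gives ``$\leq$''. Finally $\sup_{u>0}$ turns this into the claimed equality. I expect the ball-inclusion trick $\overline{B}(x_n,r_n)\subseteq\overline{B}(x_\ast,\rho_n)$, which lets the fixed-centre Lemma~\ref{lem:point} absorb the moving centres, to be the decisive point; everything else is monotonicity and bounded convergence.
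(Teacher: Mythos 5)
Your proof is correct and follows essentially the same route as the paper's: the easy inequality by monotonicity of $r\mapsto \EE^0e^{-\lambda T_{\overline{B}(x,r)}}$, and the hard one by taking near-maximizers, confining them to a compact annulus, extracting a limit point $x_*\neq 0$, and transferring to the fixed centre via the ball-inclusion trick together with Lemma~\ref{lem:point} and bounded convergence. The only cosmetic difference is that you rule out maximizers escaping to infinity by a direct estimate on $\PP^0\big(\sup_{s\leq t}|X_s|\geq |x|-r_0\big)$, whereas the paper invokes Lemma~\ref{lem:tau} (the exit times $\tau_{B(0,m)}\to\infty$ a.s.) for exactly the same purpose.
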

\begin{proof}
Let $f_r(x)=\EE^0e^{-\lambda T_{\overline{B}(x,r)}}$, $r\geq 0$, $x\in \Rd$, where $\overline{B}(x,0)=\{x\}$. Notice that
$f_r (x)\geq f_0(x)$. Therefore
\begin{align}\label{ineq:fr1}
a=\sup_{u>0}\inf_{r>0} \sup_{|x|\geq u} f_r (x)\geq \sup_{u>0}\inf_{r>0} \sup_{|x|\geq u} f_0(x)= \sup_{u>0,\,|x|\geq u} f_0(x) = \sup_{x\neq 0}f_0(x)\geq 0\,.
\end{align}
It suffices to prove the reverse inequality in the case $a\neq 0$, otherwise \eqref{ZhaoH3} holds by \eqref{ineq:fr1}. Thus let $a\in (0,1]$. Then for $\varepsilon>0$ there is $u>0$ such that for all $r>0$ we have $\sup_{|x|\geq u}f_r(x) >a- \varepsilon$.
Hence, there is a sequence $\{x_n\}$ such that $f_{1/n}(x_n)> a-\varepsilon$ and $|x_n|\geq u$.
We will show that $\{x_n\}$ is bounded. For $r\in(0,1]$, $m\in\NN$ and $|x|\geq m+2$, we have $T_{\overline{B}(x,r)}\geq \tau_{B_m}$ thus by Lemma~\ref{lem:tau} and the dominated convergence theorem there is $m_0$ such that
\begin{align*}
\sup_{|x|\geq m_0+2} f_r(x)\leq \EE^0 e^{-\lambda \tau_{m_0}}\leq a-\varepsilon\,.
\end{align*}
This proves that $m_0+2\geq |x_n|\geq u>0$ for every $n$. We let $y\neq 0$ to be the limit point of $\{x_n\}$. Observe that for every $r>0$ there is $n$  such that $B(x_n,1/n)\subseteq B(y,r)$, which implies $T_{\overline{B}(y,r)}\leq T_{\overline{B}(x_n,1/n)}$ and $f_r(y)\geq f_{1/n}(x_n)>a-\varepsilon$. Finally, by Lemma \ref{lem:point} and the dominated convergence theorem we obtain
\begin{eqnarray*}
\sup_{x\neq 0}\EE^0e^{-\lambda T_{\{x\}}}\geq \EE^0e^{-\lambda T_{\{y\}}} =\lim_{r\to 0}\EE^0e^{-\lambda T_{\overline{B}(y,r)}} = \lim_{r\to 0}f_r(y)\geq a-\varepsilon.
\end{eqnarray*}
This ends the proof since $\varepsilon>0$ was arbitrary.
\end{proof}

We continue discussing \eqref{H1}-\eqref{H3} for a L{\'e}vy process $X$ in $\Rd$.
Remark~\ref{rem:non_stick} and \cite[Lemma~2 and~3]{MR1132313} 
ensure the following.

\begin{remark}\label{H1H2}
Clearly \eqref{H1} does not hold for any compound Poisson process.\\
\eqref{H1} holds for every non-Poisson L{\'e}vy process $X$ with $h_1(X)=0$.\\
\eqref{H2} holds for every L{\'e}vy process $X$ with $h_2(X)=0$.\\
\end{remark}

\begin{proposition}\label{lem:h3_sub_Levy}
Let $X$ be a L{\'e}vy process in $\Rd$ and $\lambda>0$. For $h^{\lambda}$ defined in \eqref{h_lambda} we have
$$
h_3(X^{\lambda})=\sup_{x\neq 0} h^{\lambda}(x)\,.
$$
\end{proposition}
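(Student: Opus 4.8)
The plan is to combine the two structural reductions established earlier in the excerpt. By Lemma~\ref{lem:h3_sub} applied to the Euclidean setting with $S=\Rd$ and $\zeta=\infty$, the quantity $h_3(X^{\lambda})$ admits the representation
\begin{align*}
h_3(X^{\lambda})=\sup_{u>0}\inf_{r>0}\sup_{\substack{x,\,y\in\Rd\\|x-y|\geq u}}\EE^y e^{-\lambda T_{B(x,r)}}\,,
\end{align*}
since the life-time is infinite and the indicator $\{T_{B(x,r)}<\zeta\}$ is therefore identically $1$. Thus the first step is simply to invoke Lemma~\ref{lem:h3_sub} to pass from $h_3(X^{\lambda})$ to this resolvent-type hitting expression.

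Next I would exploit the translation invariance of the L\'evy process to recenter. Since $\EE^y F(X)=\EE^0 F(X+y)$, we have $\EE^y e^{-\lambda T_{B(x,r)}}=\EE^0 e^{-\lambda T_{B(x-y,r)}}$, and after the substitution $x\mapsto x-y$ the double supremum over $\{|x-y|\geq u\}$ collapses to a single supremum over $\{|x|\geq u\}$ with $y=0$. This yields
\begin{align*}
h_3(X^{\lambda})=\sup_{u>0}\inf_{r>0}\sup_{|x|\geq u}\EE^0 e^{-\lambda T_{B(x,r)}}\,.
\end{align*}
A small technical point here is reconciling the open ball $B(x,r)$ appearing in the definition of $h_3$ with the closed ball $\overline{B}(x,r)$ used in Lemma~\ref{lem:ZhH3}; since the family of balls shrinks to $\{x\}$ as $r\to 0^+$ and the infimum over $r>0$ is taken, the open and closed versions give the same value, so one may freely replace $B(x,r)$ by $\overline{B}(x,r)$ inside the $\inf_{r>0}$.

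The final step is to apply Lemma~\ref{lem:ZhH3} directly, which identifies the expression above with $\sup_{x\neq 0}\EE^0 e^{-\lambda T_{\{x\}}}=\sup_{x\neq 0}h^{\lambda}(x)$, where the last equality is the definition \eqref{h_lambda} of $h^{\lambda}$. I expect the main (though still mild) obstacle to be the careful justification in Lemma~\ref{lem:ZhH3} that the $\inf$-$\sup$ over shrinking balls genuinely converges to the single-point hitting functional; that convergence rests on Lemma~\ref{lem:point} (monotone convergence of $T_{\overline{B}(x,r)}$ to $T_{\{x\}}$ via quasi-left continuity) together with a tightness argument using Lemma~\ref{lem:tau} to prevent the optimizing sequence $\{x_n\}$ from escaping to infinity. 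Since that work is already packaged inside Lemma~\ref{lem:ZhH3}, the proof of the proposition itself reduces to chaining together Lemma~\ref{lem:h3_sub}, translation invariance, and Lemma~\ref{lem:ZhH3}.
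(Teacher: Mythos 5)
Your proposal is correct and follows essentially the same route as the paper's proof: Lemma~\ref{lem:h3_sub} to pass to the killed-exponential hitting expression, translation invariance of the L\'evy process to recenter at the origin, the open/closed ball reconciliation (the paper does this via the sandwich $\overline{B}(x,r/2)\subseteq B(x,r)\subseteq \overline{B}(x,r)$ and monotonicity of hitting times, which is the rigorous form of your remark), and finally Lemma~\ref{lem:ZhH3}. One small imprecision: the indicator of $\{T_{B(x,r)}<\zeta\}$ is \emph{not} identically $1$ when $\zeta=\infty$ (the hitting time itself may be infinite); rather, the restriction can be dropped because $e^{-\lambda T_{B(x,r)}}=0$ on $\{T_{B(x,r)}=\infty\}$ since $\lambda>0$ — the conclusion is unaffected.
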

\begin{proof}
By Lemma \ref{lem:h3_sub}, $\overline{B}(x,r/2)\subseteq B(x,r)\subseteq \overline{B}(x,r)$ and Lemma \ref{lem:ZhH3}
\begin{align*}
h_3(X^{\lambda})&=\sup_{u>0} \inf_{r>0} \sup_{|x-y|\geq u} \EE^y (T_{B(x,r)}<\infty;e^{-\lambda T_{B(x,r)}})
=\sup_{u>0} \inf_{r>0} \sup_{|x-y|\geq u} \EE^0 (e^{-\lambda T_{\overline{B}(x-y,r)}})\\
&= \sup_{x\neq 0} \EE^0 e^{-\lambda T_{\{x\}}}\,.
\end{align*}
\end{proof}

\noindent
By Proposition~\ref{lem:h3_sub_Levy}, Remark~\ref{rem:Levy_hlambda} 
and~\ref{rem:0_pol_h} we obtain an improvement of \cite[Lemma 4]{MR1132313}.

\begin{corollary}\label{lem:H3}
Let $X$ be non-Poisson and $\lambda>0$. Then \eqref{H3} holds for $X^{\lambda}$ if and only if
\mbox{$\{0\}$ is polar} for $X$.
If this is the case, then we have $h_3(X^{\lambda})=0$.
\end{corollary}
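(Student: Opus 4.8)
The plan is to combine Proposition~\ref{lem:h3_sub_Levy} with the dichotomy for $h^{\lambda}$ recorded in Remark~\ref{rem:Levy_hlambda}. Since $X$ is non-Poisson, Remark~\ref{rem:Levy_hlambda} tells us that exactly one of two scenarios occurs: either $\limsup_{x\to 0}h^{\lambda}(x)=1$, or $h^{\lambda}(x)=0$ for all $x\in\Rd$. By Remark~\ref{rem:0_pol_h} the latter scenario is precisely the statement that $\{0\}$ is polar for $X$. So the whole argument reduces to translating the condition $h_3(X^{\lambda})<1$ (which is what \eqref{H3} asserts) into this dichotomy.

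First I would invoke Proposition~\ref{lem:h3_sub_Levy} to write $h_3(X^{\lambda})=\sup_{x\neq 0}h^{\lambda}(x)$. Now suppose $\{0\}$ is polar for $X$. Then by Remark~\ref{rem:0_pol_h} we have $h^{\lambda}(x)=0$ for every $x\in\Rd$, in particular the supremum over $x\neq 0$ is $0$. Hence $h_3(X^{\lambda})=0<1$, so \eqref{H3} holds and moreover $h_3(X^{\lambda})=0$, which is the last assertion of the corollary. Conversely, suppose $\{0\}$ is not polar for $X$. Then by Remark~\ref{rem:0_pol_h} the function $h^{\lambda}$ is not identically zero, so Remark~\ref{rem:Levy_hlambda} (the dichotomy uses non-Poissonicity here) forces $\limsup_{x\to 0}h^{\lambda}(x)=1$. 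In particular, taking a sequence $x_n\to 0$ with $x_n\neq 0$ along which $h^{\lambda}(x_n)\to 1$, we get $\sup_{x\neq 0}h^{\lambda}(x)=1$, i.e.\ $h_3(X^{\lambda})=1$, so \eqref{H3} fails.

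Putting the two directions together yields the equivalence: \eqref{H3} holds for $X^{\lambda}$ if and only if $\{0\}$ is polar for $X$, and in that case $h_3(X^{\lambda})=0$. The improvement over \cite[Lemma~4]{MR1132313} is that the sharp value of the supremum is available: the non-Poisson dichotomy collapses the intermediate possibilities, so $h_3(X^{\lambda})$ can only equal $0$ or $1$.

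The one point requiring slight care is the treatment of the supremum near the origin. Because $h^{\lambda}$ need not be continuous at $0$ (indeed cases (B), (B'), (C), (C') in the classification feature precisely such discontinuity), one should not identify $\sup_{x\neq 0}h^{\lambda}(x)$ with $h^{\lambda}(0)$; the relevant quantity controlling $h_3(X^{\lambda})$ is the behaviour of $h^{\lambda}$ away from $0$, and the $\limsup$ as $x\to 0$ is exactly what Remark~\ref{rem:Levy_hlambda} supplies. I expect this to be the only subtle step, and it is handled cleanly by the dichotomy rather than by any continuity argument. The remaining manipulations are immediate from the two cited remarks.
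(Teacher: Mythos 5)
Your proof is correct and takes essentially the same approach as the paper, which derives the corollary precisely by combining Proposition~\ref{lem:h3_sub_Levy}, Remark~\ref{rem:Levy_hlambda} and Remark~\ref{rem:0_pol_h}. Your unfolding of the non-Poisson dichotomy into the two directions (polar $\Rightarrow h_3(X^{\lambda})=0$; non-polar $\Rightarrow \limsup_{x\to 0}h^{\lambda}(x)=1 \Rightarrow h_3(X^{\lambda})=1$) is exactly the intended argument, including the observation that one works with the supremum over $x\neq 0$ rather than with any continuity of $h^{\lambda}$ at the origin.
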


\section{Kato class}

Let $X$ be a Hunt process in $\Rd$.
For $t\geq 0$ we define the {\it transition kernel} $P_t(x,dz)$ and
the corresponding {\it transition operator} $P_t$ by
$$
P_t (x,B) =\PP^x (X_t\in B)\,,\qquad
P_t f(x) =\int_{\Rd} f(z)P_t(x,dz)\,.
$$
Moreover, for $\lambda\geq 0$ and $t\in(0,\infty]$ we let
\begin{align*}
G_t^{\lambda}(x,B)=\int_0^t e^{-\lambda s} P_u(x,B)du\,,\qquad
G_t^{\lambda} f(x)= \int_{\Rd} f(z) G_t^{\lambda}(x,dz)
=
\int_0^t e^{-\lambda u} P_u f(x)du
\,,
\end{align*}
to
be
the
(truncated)
{\it $\lambda$-potential kernel} 
and the 
(truncated)
{\it $\lambda$-potential operator} $G_t^{\lambda}$,
respectively.
We simplify the notation by putting $G^{\lambda}(x,dz)=G_{\infty}^{\lambda}(x,dz)$
and $G^{\lambda}=G_{\infty}^{\lambda}$.

\begin{defn}\label{def:Kato}
Let $q:\Rd\to\R$.
We write $q\in \KK(X)$ if \eqref{start} holds, i.e.,
\begin{align}\label{def:KK}
\lim_{t\to 0^+}\left[\sup_{x\in\Rd} G^0_t |q|(x)\right]=0.
\end{align}
We write $q\in\pK(X)$ if  \eqref{starteqiv}  holds for some (every) $\lambda>0$, i.e.,
\begin{align}\label{def:pK}
\lim_{r\to 0^+}\left[\sup_{x\in\Rd}\int_{B(x,r)}|q(z)|\,G^\lambda(x,dz)\right]=0.
\end{align}
\end{defn}

If the process $X$ is understood from the context we will write in short $\KK$, $\pK$ for $\KK(X)$,~$\pK(X)$.
In the next two lemmas we show that the definition of $\pK$ is consistent. The first one is an apparent reinforcement of \eqref{starteqiv} and \eqref{def:pK}.
\begin{lemma}\label{sup_R2}
For all $\lambda\geq 0$, $t\in(0,\infty]$,
\begin{align*}
\left[\sup_{x,y\in\Rd}\int_{B(x,r)}|q(z)|\,G_t^{\lambda}(y,dz)\right]
\leq \left[ \sup_{x\in\Rd} \int_{B(x,2r)}|q(z)|\,G_t^{\lambda}(x,dz)\right]\,,\quad r>0\,.
\end{align*}
\end{lemma}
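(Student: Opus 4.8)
The plan is to establish the pointwise estimate
$$
\int_{B(x,r)}|q(z)|\,G_t^{\lambda}(y,dz)\leq M,\qquad M:=\sup_{w\in\Rd}\int_{B(w,2r)}|q(z)|\,G_t^{\lambda}(w,dz),
$$
for every fixed $x,y\in\Rd$; taking the supremum over $x,y$ then gives the assertion. The first step is to rewrite the kernel integral as an occupation-time expectation: by Tonelli's theorem,
\begin{align*}
\int_{B(x,r)}|q(z)|\,G_t^{\lambda}(y,dz)=\EE^y\int_0^t e^{-\lambda u}\,\ind_{B(x,r)}(X_u)\,|q(X_u)|\,du\,.
\end{align*}

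Next I would introduce the stopping time $\sigma=T_{B(x,r)}$. Since $\ind_{B(x,r)}(X_u)=0$ for every $u<\sigma$, the time integral starts effectively at $\sigma$ and is empty on $\{\sigma\geq t\}$. Applying the strong Markov property at $\sigma$ and substituting $u=\sigma+v$ yields
\begin{align*}
\EE^y\int_0^t e^{-\lambda u}\,\ind_{B(x,r)}(X_u)\,|q(X_u)|\,du=\EE^y\left[\ind_{\{\sigma<t\}}\,e^{-\lambda\sigma}\,\Phi(X_\sigma,t-\sigma)\right],
\end{align*}
where $\Phi(w,s)=\EE^w\int_0^s e^{-\lambda v}\,\ind_{B(x,r)}(X_v)\,|q(X_v)|\,dv$. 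Because the integrand is nonnegative, $s\mapsto\Phi(w,s)$ is nondecreasing, so $\Phi(X_\sigma,t-\sigma)\leq\Phi(X_\sigma,t)$ (this also covers $t=\infty$).

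The decisive step is to double the radius. By right-continuity of the paths, $X_\sigma\in\overline{B}(x,r)$ on $\{\sigma<\infty\}$, so every $z\in B(x,r)$ satisfies $|z-X_\sigma|\leq|z-x|+|x-X_\sigma|<2r$, that is, $B(x,r)\subseteq B(X_\sigma,2r)$. Hence
\begin{align*}
\Phi(X_\sigma,t)\leq\EE^{X_\sigma}\int_0^t e^{-\lambda v}\,\ind_{B(X_\sigma,2r)}(X_v)\,|q(X_v)|\,dv=\int_{B(X_\sigma,2r)}|q(z)|\,G_t^{\lambda}(X_\sigma,dz)\leq M\,.
\end{align*}
Combining the two displays and using $\lambda\geq 0$ gives $\int_{B(x,r)}|q(z)|\,G_t^{\lambda}(y,dz)\leq M\,\EE^y[\ind_{\{\sigma<t\}}e^{-\lambda\sigma}]\leq M$, because $e^{-\lambda\sigma}\leq 1$ and $\PP^y(\sigma<t)\leq 1$. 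I would note that no translation invariance is used, so the argument is valid for a general Hunt process.

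The main obstacle — essentially the only delicate point — is the clean application of the strong Markov property at $\sigma$ together with the verification that $X_\sigma$ lands in the closed ball $\overline{B}(x,r)$; the latter follows from right-continuity and from $\sigma$ being the infimum of times at which $X$ lies in the open ball $B(x,r)$. The degenerate cases $\sigma=0$ (when $y\in B(x,r)$, so $X_\sigma=y$) and $t=\infty$ require no separate treatment, as they are absorbed by the monotonicity of $\Phi$ in its second argument.
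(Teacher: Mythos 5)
Your proof is correct and takes essentially the same route as the paper's: rewrite the kernel integral as an occupation-time expectation, apply the strong Markov property at the first hitting time of the ball, use right-continuity to conclude $X_\sigma\in\overline{B}(x,r)$ and hence $B(x,r)\subseteq B(X_\sigma,2r)$, and bound by the supremum. The only cosmetic differences are that the paper uses the hitting time of the closed ball $\overline{B}(x,r)$ and disposes of the time truncation via $\ind_{(0,t]}(T+u)\leq\ind_{(0,t]}(u)$ \emph{before} invoking the Markov property, whereas you carry the horizon $t-\sigma$ through the Markov step and then use monotonicity of $\Phi$ in its second argument.
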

\begin{proof}
Let $T=T_{\overline{B}(x,r)}$. The strong Markov property leads to
\begin{align*}
&\EE^{y}\left( \int_0^{\infty} e^{-\lambda s} \ind_{(0,t](s)} \ind_{B(x,r)}(X_s) |q(X_s)|\,ds\right)
=\EE^{y}\left(T<\infty; \int_T^{\infty} e^{-\lambda s} \ind_{(0,t]}(s) \ind_{B(x,r)}(X_s) |q(X_s)|\,ds\right)\\
&\leq\EE^{y}\left(T<\infty; e^{-\lambda T}\int_0^{\infty}  e^{-\lambda u} \ind_{(0,t]}(u) \ind_{B(x,r)}(X_u\theta_T) |q(X_u\theta_T)\,du\right)\\
&=\EE^{y}\left(T<\infty; e^{-\lambda T}\EE^{X_T}\left(\int_0^{\infty} e^{-\lambda u} \ind_{(0,t]}(u) \ind_{B(x,r)}(X_u) |q(X_u)|\,du\right)\right)\,,
\end{align*}
where $\theta$ denotes the usual shift operator.
By the right continuity $X_T\in \overline{B}(x,r)$ and $B(x,r)\subseteq B(X_T,2r)$ on $\{T<\infty\}$. Thus eventually
\begin{align*}
\int_{B(x,r)}|q(z)|\,G_{t}^{\lambda}(y,dz)
\leq \EE^{y}\left(T<\infty; e^{-\lambda T}\EE^{X_T}\left(\int_0^{\infty} e^{-\lambda u} \ind_{(0,t]}(u) \ind_{B(X_T,2r)}(X_u) |q(X_u)|\,du\right)\right)\\
\leq \sup_{x\in\Rd}\EE^x\left[ \int_0^{\infty}e^{-\lambda u} \ind_{(0,t](u)} \ind_{B(x,2r)}(X_u) |q(X_u)|\,du\right]
=\sup_{x\in\Rd} \int_{B(x,2r)}|q(z)|\,G_t^{\lambda_0}(x,dz)\,.
\end{align*}
\end{proof}

\begin{lemma}\label{lem:consistency}
If \eqref{starteqiv} or \eqref{def:pK} holds for some $\lambda_0>0$, then it holds for every $\lambda>0$.
\end{lemma}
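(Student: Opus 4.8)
The plan is to prove the equivalence of the defining quantity across all $\lambda>0$ by controlling the discrepancy between two exponential weights $e^{-\lambda u}$ and $e^{-\lambda_0 u}$ on the integration window. The key observation is that in \eqref{def:pK} the integral is effectively localized: although the time integral runs over $(0,\infty)$, the exit-time estimate from Lemma~\ref{sup_R2} shows that the spatial restriction $\ind_{B(x,r)}$ forces the relevant contribution to come (after the strong Markov property) from short time scales. More directly, for the purpose of comparing different $\lambda$ I would first reduce to comparing $G^{\lambda}$ with a truncated version $G^{\lambda}_t$ for a fixed small $t$, since on $(0,t]$ the two weights $e^{-\lambda u}$ and $e^{-\lambda_0 u}$ are comparable with a constant depending only on $t,\lambda,\lambda_0$, and the tail $(t,\infty)$ can be bounded separately.

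First I would fix $\lambda_0>0$ for which the condition holds and an arbitrary $\lambda>0$; without loss of generality assume $\lambda \leq \lambda_0$ (the other direction being symmetric, as larger $\lambda$ only decreases the weight). For any $t\in(0,\infty)$ write
\begin{align*}
\int_{B(x,r)}|q(z)|\,G^{\lambda}(x,dz)
=\EE^x\left(\int_0^{t} e^{-\lambda u}\ind_{B(x,r)}(X_u)|q(X_u)|\,du\right)
+\EE^x\left(\int_t^{\infty} e^{-\lambda u}\ind_{B(x,r)}(X_u)|q(X_u)|\,du\right).
\end{align*}
On the first term I use $e^{-\lambda u}\leq e^{(\lambda_0-\lambda)t}e^{-\lambda_0 u}$ for $u\in(0,t]$, so this part is at most $e^{(\lambda_0-\lambda)t}\sup_x\int_{B(x,r)}|q(z)|\,G^{\lambda_0}_t(x,dz)$, which by hypothesis (and $G^{\lambda_0}_t\leq G^{\lambda_0}$) is controlled by the quantity that tends to $0$ as $r\to 0^+$.

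The remaining step, and the one I expect to be the main obstacle, is the tail term $\EE^x\big(\int_t^{\infty} e^{-\lambda u}\ind_{B(x,r)}(X_u)|q(X_u)|\,du\big)$, which must be shown to vanish as $r\to 0^+$ uniformly in $x$. Here the plan is to invoke the strong Markov property at the first hitting time $T=T_{\overline{B}(x,r)}$ exactly as in the proof of Lemma~\ref{sup_R2}: on $\{T<\infty\}$ one has $B(x,r)\subseteq B(X_T,2r)$, and shifting by $T$ together with $e^{-\lambda u}\leq e^{-\lambda(u-T)}$ lets one dominate the tail by $\sup_x\EE^x\big(\int_0^{\infty}e^{-\lambda u}\ind_{B(x,2r)}(X_u)|q(X_u)|\,du\big)$ — but this is circular, so instead I would bound the tail crudely using the already-established finiteness at $\lambda_0$. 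Concretely, since $e^{-\lambda u}=e^{(\lambda_0-\lambda)u}e^{-\lambda_0 u}$ and on the tail we cannot absorb the growing factor directly, the cleaner route is to run the Lemma~\ref{sup_R2} argument with weight $\lambda$ to get the bound
$$
\int_{B(x,r)}|q(z)|\,G^{\lambda}(x,dz)\leq \sup_{y}\int_{B(y,2r)}|q(z)|\,G^{\lambda}(y,dz),
$$
and then observe that over $(0,\infty)$ the comparison $e^{-\lambda u}\leq \tfrac{\lambda_0}{\lambda}\,(\lambda_0/\lambda)^{\ldots}$ is false, so one genuinely must truncate. I therefore expect the correct argument to first establish, via Lemma~\ref{sup_R2} applied at level $\lambda$, that $\sup_x G^{\lambda}\mathbf{1}_{B(x,r)}|q|(x)<\infty$ once it is finite at $\lambda_0$, by splitting the $\lambda$-potential into the piece up to the first return and a geometric tail of such pieces; each return contributes a factor from the strong Markov property, and summability of the geometric series uses that $\{0\}$-hitting returns strictly before absorbing all mass. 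Granting this uniform finiteness, the $r\to 0^+$ limit transfers from $\lambda_0$ to $\lambda$ because the truncated parts agree up to multiplicative constants and the tail, being a fixed-$r$ bounded quantity times a factor $e^{-\lambda t}$, is made small by first choosing $t$ large and then $r$ small. The delicate point throughout is maintaining uniformity in the starting point $x$, which is exactly what the translation structure and Lemma~\ref{sup_R2} are designed to supply.
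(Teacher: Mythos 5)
Your opening moves are sound: the reduction to $\lambda<\lambda_0$ (for $\lambda\geq\lambda_0$ one has $G^{\lambda}\leq G^{\lambda_0}$ as measures and there is nothing to prove), the comparison $e^{-\lambda u}\leq e^{(\lambda_0-\lambda)t}e^{-\lambda_0 u}$ on a finite time block, and the use of Lemma~\ref{sup_R2} to restore uniformity in the starting point. But the step you yourself flag as ``the main obstacle''---the tail $\EE^x\bigl(\int_t^{\infty}e^{-\lambda u}\ind_{B(x,r)}(X_u)|q(X_u)|\,du\bigr)$---is never actually resolved. Your first two attempts are circular (as you admit), and your final proposal rests on a false mechanism: the claim that ``summability of the geometric series uses that $\{0\}$-hitting returns strictly before absorbing all mass'' has no content here. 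Polarity or hitting of $\{0\}$ is irrelevant to this lemma (it holds for every Hunt process, whatever the fine behaviour at points), and successive ``returns'' to a ball do not produce a factor bounded away from $1$ for a general process, since the process may simply sit in or re-enter the ball immediately. What does produce a geometric series is the \emph{deterministic} time-block decomposition: split $(0,\infty)$ into $[kt,(k+1)t]$, apply the Markov property at time $kt$, and collect the discount $e^{-\lambda kt}$; this is precisely the device in the paper's Lemma~\ref{lem:ineq_eqiv}. With it your plan closes: for any $t>0$,
\begin{align*}
\sup_{x,y\in\Rd}\int_{B(x,r)}|q(z)|\,G^{\lambda}(y,dz)
\;\leq\;\frac{e^{(\lambda_0-\lambda)t}}{1-e^{-\lambda t}}\,
\sup_{x,y\in\Rd}\int_{B(x,r)}|q(z)|\,G^{\lambda_0}(y,dz)\,,
\end{align*}
and the right-hand side tends to $0$ as $r\to 0^+$ by Lemma~\ref{sup_R2} and the hypothesis at $\lambda_0$. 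As written, however, your argument has a genuine gap at its central point, and the two-step limit ``first $t$ large, then $r$ small'' is a detour that becomes unnecessary once the block estimate is in place.

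For comparison, the paper's proof avoids all time truncation: by the resolvent equation, $G^{\lambda}=G^{\lambda_0}+(\lambda_0-\lambda)\,G^{\lambda_0}G^{\lambda}$, so it suffices to control the measure $G^{\lambda_0}G^{\lambda}(x,dz)$. Writing $\int_{B(x,r)}|q(z)|\,G^{\lambda_0}G^{\lambda}(x,dz)=\int_{\Rd}\bigl(\int_{B(x,r)}|q(z)|\,G^{\lambda_0}(y,dz)\bigr)G^{\lambda}(x,dy)$ and using the total mass bound $G^{\lambda}(x,\Rd)\leq 1/\lambda$, this is at most $\lambda^{-1}\sup_{x,y}\int_{B(x,r)}|q(z)|\,G^{\lambda_0}(y,dz)$, which vanishes as $r\to0^+$ by Lemma~\ref{sup_R2}. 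So the same two external inputs you identified (uniformity via Lemma~\ref{sup_R2}, plus one structural identity) suffice, but the resolvent identity replaces your entire truncation-and-tail analysis.
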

\begin{proof}
Clearly, by the resolvent formula (see \cite[Chapter 1,  (8.10)]{MR0264757}) it suffices to consider the  measure $A\mapsto \int \ind_A(z)\, G^{\lambda_0}G^{\lambda}(x,dz)=\iint \ind_A(z)G^{\lambda_0}(y,dz)G^{\lambda}(x,dy)$. We have
\begin{align*}
\int_{B(x,r)} |q(z)|\, G^{\lambda_0}G^{\lambda}(x,dz)&=\int_{\Rd} \left(\int_{B(x,r)} |q(z)|\, G^{\lambda_0}(y,dz)\right)G^{\lambda}(x,dy)\\
&\leq \lambda^{-1} \left[\sup_{x,y\in\Rd}\int_{B(x,r)}|q(z)|\,G^{\lambda_0}(y,dz)\right]\,.
\end{align*}
This ends the proof due to Lemma \ref{sup_R2}.
\end{proof}

Now, we give alternative characterisations of $\KK(X)$ and $\pK(X)$.
We easily observe that 
\begin{align}\label{simple}
e^{-\lambda t}\, G_t^0(x,dz)\leq G_t^{\lambda}(x,dz)\leq G_t^0(x,dz)\,.
\end{align}

\begin{lemma}\label{lem:ineq_eqiv}
For $\lambda>0$ and $t\in [1/\lambda,\infty]$ we have
\begin{align*}
(1-e^{-1})\sup_x \big[G^{\lambda}_t |q|(x)\big] \leq \sup_x \left[ G^0_{1/\lambda}|q|(x)
\right] \leq e \sup_x \big[G^{\lambda}_t |q|(x) \big].
\end{align*}
\end{lemma}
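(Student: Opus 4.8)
The plan is to establish both inequalities pointwise in $x$ and then pass to the supremum, treating the two bounds separately since they are of rather different character: the right-hand one is elementary, while the left-hand one relies on the semigroup structure. Throughout I would write $g(u)=P_u|q|(x)\geq 0$, so that $G^{\lambda}_t|q|(x)=\int_0^t e^{-\lambda u}g(u)\,du$ and $G^0_{1/\lambda}|q|(x)=\int_0^{1/\lambda}g(u)\,du$. For the upper estimate $G^0_{1/\lambda}|q|(x)\leq e\,G^{\lambda}_t|q|(x)$ I would simply use that $e^{-\lambda u}\geq e^{-1}$ on $[0,1/\lambda]$, whence
\begin{align*}
\int_0^{1/\lambda}g(u)\,du\leq e\int_0^{1/\lambda}e^{-\lambda u}g(u)\,du\leq e\int_0^{t}e^{-\lambda u}g(u)\,du,
\end{align*}
the last step using $t\geq 1/\lambda$ together with $g\geq 0$. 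Taking $\sup_x$ gives the right inequality.

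For the lower estimate I would decompose $[0,t)$ into the blocks $[k/\lambda,(k+1)/\lambda)$, $k=0,1,2,\dots$, on each of which $e^{-\lambda u}\leq e^{-k}$. Set $M=\sup_y G^0_{1/\lambda}|q|(y)$ and assume $M<\infty$, the case $M=\infty$ being trivial. The key observation is that, by the Chapman--Kolmogorov identity $P_u=P_{k/\lambda}P_{u-k/\lambda}$ and Tonelli's theorem, each block integral is the semigroup applied to the truncated potential:
\begin{align*}
\int_{k/\lambda}^{(k+1)/\lambda}g(u)\,du=\int_0^{1/\lambda}P_{k/\lambda}\big(P_v|q|\big)(x)\,dv=P_{k/\lambda}\big(G^0_{1/\lambda}|q|\big)(x)\leq M,
\end{align*}
the last inequality because $P_{k/\lambda}$ is a sub-Markovian contraction, so $P_{k/\lambda}f\leq\sup_y f(y)$ for $f\geq 0$. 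Summing the blocks and bounding $e^{-\lambda u}$ by $e^{-k}$ then yields
\begin{align*}
G^{\lambda}_t|q|(x)\leq \sum_{k=0}^{\infty}e^{-k}\int_{k/\lambda}^{(k+1)/\lambda}g(u)\,du\leq M\sum_{k=0}^{\infty}e^{-k}=\frac{M}{1-e^{-1}}.
\end{align*}
Taking $\sup_x$ and rearranging gives $(1-e^{-1})\sup_x G^{\lambda}_t|q|(x)\leq M$, which is the left inequality, valid uniformly for all $t\in[1/\lambda,\infty]$ since the block sum only grows with $t$.

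The main obstacle is the lower bound, and within it the step that recognises the block integral as $P_{k/\lambda}$ applied to $G^0_{1/\lambda}|q|$: one must justify the interchange of $P_{k/\lambda}$ with the inner time integral, which is legitimate by Tonelli's theorem since $|q|\geq 0$ and the kernels are measurable, and then invoke the contractivity of the transition operators on nonnegative bounded functions, which is exactly what converts the blockwise estimate into the uniform constant $M$. Once these two facts are in place, the geometric summation $\sum_{k\geq 0}e^{-k}=1/(1-e^{-1})$ produces the stated sharp constant, and the elementary right-hand estimate completes the chain.
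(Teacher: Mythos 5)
Your proof is correct and follows essentially the same route as the paper: the elementary pointwise bound $e^{-\lambda u}\geq e^{-1}$ on $[0,1/\lambda]$ for the right-hand inequality, and for the left-hand one the decomposition of time into blocks $[k/\lambda,(k+1)/\lambda)$ combined with the semigroup identity $P_u=P_{k/\lambda}P_{u-k/\lambda}$, the sub-Markovian contraction property, and the geometric sum $\sum_k e^{-k}=(1-e^{-1})^{-1}$. The only cosmetic difference is that the paper states the block argument directly for $G^{\lambda}=G^{\lambda}_{\infty}$ (which dominates $G^{\lambda}_t$), while you keep the truncation explicit; the content is identical.
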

\begin{proof}
Actually, the upper bound holds pointwise as follows,
$$G_{1/\lambda}^0|q|(x)=\int_0^{1/\lambda} P_u|q|(x)du\leq e\int_0^{1/\lambda} e^{-\lambda u}P_u |q|(x)du\leq e\,G^{\lambda}_t|q| (x).$$
We prove the lower bound,
\begin{align*}
G^{\lambda}|q|(x)&\leq\sum^\infty_{k=0}e^{-k}\int^{(k+1)/\lambda}_{k/\lambda}P_{k/\lambda}P_{u-k/\lambda}|q|(x)du= \sum^\infty_{k=0}e^{-k}P_{k/\lambda}\(\int^{1/\lambda}_0 P_u|q|(\cdot)du\)(x)\\&\leq (1-e^{-1})^{-1}\sup_{z\in\Rd}\left[\int^{1/\lambda}_0 P_u|q|(z)du\right].
\end{align*}
\end{proof}

Here is a conclusion from \eqref{simple} and Lemma~\ref{lem:ineq_eqiv}.
\begin{proposition}\label{gen_KK_equiv}
The following conditions are equivalent to $q\in \KK(X)$.
\begin{enumerate}
\item[i)] $\lim_{t\to 0^+}\Big[\sup_{x\in \Rd}G^\lambda_{t} |q|(x)\Big]=0$ for some (every) $\lambda \geq 0$.
\item[ii)] $\lim_{\lambda\to \infty}\Big[\sup_{x\in \Rd}G^\lambda_{t} |q|(x)\Big]=0$ for some (every) $t\in (0,\infty]$.
\end{enumerate}
\end{proposition}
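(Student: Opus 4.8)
The goal is to prove Proposition~\ref{gen_KK_equiv}, characterizing the Kato class $\KK(X)$ via truncated $\lambda$-potential operators. The plan is to exploit the two ingredients already established: the pointwise sandwich \eqref{simple}, which compares $G_t^{\lambda}$ with $G_t^0$ for fixed $t$, and Lemma~\ref{lem:ineq_eqiv}, which handles the transition between a truncated integral and the full integral when $t\geq 1/\lambda$. The definition of $q\in\KK(X)$ is \eqref{def:KK}, i.e. $\lim_{t\to 0^+}\sup_x G_t^0|q|(x)=0$, so both equivalences are established by inserting $\lambda$ and varying the parameters.

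First I would prove i). The case $\lambda=0$ is the definition itself, so it suffices to show that for a fixed $\lambda>0$ the condition $\lim_{t\to 0^+}\sup_x G_t^\lambda|q|(x)=0$ is equivalent to \eqref{def:KK}. For each fixed $t>0$, the sandwich \eqref{simple} integrated against $|q|$ gives
\begin{align*}
e^{-\lambda t}\,\sup_x G_t^0|q|(x)\leq \sup_x G_t^\lambda|q|(x)\leq \sup_x G_t^0|q|(x)\,.
\end{align*}
As $t\to 0^+$ the factor $e^{-\lambda t}\to 1$, so the outer quantities tend to $0$ together, proving i) for every $\lambda\geq 0$. This step is essentially immediate once \eqref{simple} is in hand.

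Next I would prove ii). Fix $t\in(0,\infty]$ and a corresponding $\lambda$ large enough that $t\geq 1/\lambda$. For such $\lambda$, Lemma~\ref{lem:ineq_eqiv} gives
\begin{align*}
(1-e^{-1})\sup_x G_t^\lambda|q|(x)\leq \sup_x G_{1/\lambda}^0|q|(x)\leq e\,\sup_x G_t^\lambda|q|(x)\,.
\end{align*}
Hence $\lim_{\lambda\to\infty}\sup_x G_t^\lambda|q|(x)=0$ holds if and only if $\lim_{\lambda\to\infty}\sup_x G_{1/\lambda}^0|q|(x)=0$. Writing $s=1/\lambda$, the latter limit is $\lim_{s\to 0^+}\sup_x G_s^0|q|(x)$, which is precisely \eqref{def:KK}. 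This shows ii) holds for the given $t$; since $t\in(0,\infty]$ was arbitrary and the equivalence is to the same condition $q\in\KK(X)$, the ``for some (every) $t$'' statement follows, completing the proof.

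The argument is almost entirely a matter of correctly tracking parameters, so I do not anticipate a serious obstacle. The one point that deserves care is the direction of the limits in ii): the truncation parameter in $G_t^\lambda$ is held fixed at $t$ while $\lambda\to\infty$, whereas Lemma~\ref{lem:ineq_eqiv} relates $G_t^\lambda$ to the \emph{untruncated-at-}$1/\lambda$ operator $G_{1/\lambda}^0$ whose own cutoff shrinks as $\lambda$ grows. One must check that the hypothesis $t\geq 1/\lambda$ of Lemma~\ref{lem:ineq_eqiv} is met along the whole tail $\lambda\to\infty$ (which it is, for $\lambda\geq 1/t$ when $t<\infty$, and trivially when $t=\infty$), and that the substitution $s=1/\lambda$ converts $\lambda\to\infty$ into $s\to 0^+$, matching the defining limit of $\KK(X)$.
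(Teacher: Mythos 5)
Your proof is correct and follows exactly the route the paper intends: the paper derives Proposition~\ref{gen_KK_equiv} precisely as ``a conclusion from \eqref{simple} and Lemma~\ref{lem:ineq_eqiv}'', which is what you carry out, using \eqref{simple} for part i) and Lemma~\ref{lem:ineq_eqiv} (with the substitution $s=1/\lambda$ and the check that $t\geq 1/\lambda$ along the tail) for part ii).
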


For resolvent operators $R^{\lambda}$, $\lambda>0$, of a strongly continuous  contraction semigroup on a Banach space we have $\lim_{\lambda \to \infty}\lambda R^{\lambda}\phi =  \phi$. Thus $\lim_{\lambda\to\infty} R^{\lambda}\phi=0$ in the norm for every element $\phi$ of the Banach space. For a Markov process the counterparts of the resolvent operators are the $\lambda$-potential operators $G^{\lambda}_{\infty}$.

Proposition \ref{gen_KK_equiv} extends the equivalence of (i) and (ii) of  \cite[Theorem~III.1]{MR1054115} from a subclass of L{\'e}vy processes to any Hunt process.
Similar result is proved in \cite[Lemma~3.1]{MR2277833} where authors discuss the Kato class of measures for Markov processes possessing transition densities that satisfy the Nash type estimate (see\
\cite{MR2345907} for the symmetric case).
In Lemma \ref{lem:L_unif} we also show that the 
uniform local integrability of $V$ (\cite[Theorem III.1]{MR1054115}) is 
necessary 
for $V\in \KK(X)$ for any L{\'e}vy process $X$ in $\Rd$.

We briefly explain  the role of Proposition~\ref{gen_KK_equiv}.
For the Brownian motion, as mentioned in \cite{MR1642818}, by Stein's interpolation theorem the inequality $\sup_{x\in\Rd} [G^{\lambda}|q|(x)]\leq \gamma$ leads  to $|\!| |q|^{1/2} \phi |\!|_2^2\leq \gamma\, |\!|\nabla \phi |\!|_2^2 + \lambda |\!|\phi |\!|_2^2 $, $\phi\in C_c^{\infty}(\Rd)$
(a partial reverse result is proved in \cite[Theorem 4.9]{MR644024}).
For a counterpart of such implication  for other processes see remarks preceding \cite[Theorem 4.10]{MR2944475}.
The latter inequality with
$\gamma<1$ allows to define a self-adjoint Schr{\"o}dinger operator in the sense of quadratic forms, cf. \cite[Theorem 3.17]{MR2848339}, the analogue of Kato-Rellich theorem.

We use Lemma~\ref{sup_R2} to get a better insight into the result of Lemma~\ref{lem:ineq_eqiv}.
\begin{lemma}\label{lem:ineq_equiv_impr}
For $t\in(0,\infty)$ we have $G_t^0(x,dz)\leq e\, G^{1/t}(x,dz)$ and
\begin{align*}
(1-e^{-1}) \sup_{x\in\Rd} \left[\int_{B(x,r)}|q(z)| G^{1/t}(x,dz)\right]
\leq \sup_{x\in\Rd} \left[ \int_{B(x,2r)} |q(z)|G_t^0(x,dz)\right],\quad r>0\,.
\end{align*}
\end{lemma}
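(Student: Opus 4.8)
The plan is to prove the two assertions separately, in each case specializing the mechanism of Lemma~\ref{lem:ineq_eqiv} to $\lambda=1/t$ and then localizing in space through Lemma~\ref{sup_R2}.

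For the first inequality I would argue directly at the level of kernels. Since $e^{-u/t}\geq e^{-1}$ for $u\in[0,t]$, we have $P_u(x,B)\leq e\,e^{-u/t}P_u(x,B)$ on that range; integrating over $u\in(0,t)$ and then extending the upper limit of integration to $\infty$ gives
$$G_t^0(x,B)=\int_0^t P_u(x,B)\,du\leq e\int_0^\infty e^{-u/t}P_u(x,B)\,du=e\,G^{1/t}(x,B),$$
which is exactly the kernel version of the pointwise upper bound already recorded in Lemma~\ref{lem:ineq_eqiv}.

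For the second (and main) inequality I would write the left-hand quantity probabilistically as $\EE^x\int_0^\infty e^{-u/t}\ind_{B(x,r)}(X_u)|q(X_u)|\,du$ and split the time integral over the blocks $[kt,(k+1)t)$, $k\geq 0$. On the $k$-th block I bound $e^{-u/t}\leq e^{-k}$ and apply the Markov property at time $kt$, which turns the block contribution into $e^{-k}\,\EE^x\big[\int_{B(x,r)}|q(z)|\,G_t^0(X_{kt},dz)\big]$. The key point to observe is that the ball $B(x,r)$ stays centred at the fixed starting point $x$, whereas the Markov property restarts the process from the random point $X_{kt}$; so each inner integral has the form $\int_{B(x,r)}|q(z)|\,G_t^0(y,dz)$ with $y\neq x$ in general. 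This re-centring mismatch is the only genuine obstacle.

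It is resolved precisely by Lemma~\ref{sup_R2}, which bounds $\sup_{x,y\in\Rd}\int_{B(x,r)}|q(z)|\,G_t^0(y,dz)$ by $\sup_{x\in\Rd}\int_{B(x,2r)}|q(z)|\,G_t^0(x,dz)$ at the cost of doubling the radius. Bounding each inner integral by this (now deterministic) supremum, the expectation $\EE^x[\,\cdot\,]$ reproduces the same constant, and summing the geometric series $\sum_{k\geq 0}e^{-k}=(1-e^{-1})^{-1}$ yields
$$\int_{B(x,r)}|q(z)|\,G^{1/t}(x,dz)\leq (1-e^{-1})^{-1}\sup_{x\in\Rd}\int_{B(x,2r)}|q(z)|\,G_t^0(x,dz).$$
Taking the supremum over $x$ on the left and multiplying through by $(1-e^{-1})$ gives the claim; apart from the re-centring handled by Lemma~\ref{sup_R2}, the argument is the same geometric-series bookkeeping as in Lemma~\ref{lem:ineq_eqiv}.
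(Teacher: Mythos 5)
Your proposal is correct and takes essentially the same route as the paper: both proofs rest on the block/geometric-series mechanism of Lemma~\ref{lem:ineq_eqiv} (specialized to $\lambda=1/t$) combined with the re-centring Lemma~\ref{sup_R2} to resolve the mismatch between the fixed ball $B(x,r)$ and the restarted process. The only cosmetic difference is that the paper applies Lemma~\ref{lem:ineq_eqiv} as a black box to the truncated function $\tilde q(z)=q(z)\ind_{B(y,r)}(z)$ and invokes Lemma~\ref{sup_R2} once at the very end, whereas you inline the Markov-property block decomposition and apply Lemma~\ref{sup_R2} inside each block before summing the series.
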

\begin{proof}
For a fixed $y\in\Rd$ by Lemma \ref{lem:ineq_eqiv} with $\tilde{q}(z)=q(z)\ind_{B(y,r)}(z)$ we have
\begin{align*}
(1-e^{-1}) \int_{B(y,r)}|q(z)| G^{1/t}(y,dz)
&= (1-e^{-1}) G^{1/t}|\tilde{q}|(y)\\
\leq \sup_{x\in\Rd}\int^t_{0} P_s|\tilde{q}|(x)ds
&=\sup_{x\in\Rd} \int_{\Rd} |\tilde{q}(z)|G_t^0(x,dz)
=\sup_{x\in\Rd}\int_{B(y,r)}|q(z)|G_t^0(x,dz).
\end{align*}
Thus, by Lemma \ref{sup_R2} we obtain
\begin{align*}
(1-e^{-1}) \sup_{y\in\Rd} \int_{B(y,r)}|q(z)| G^{1/t}(y,dz) \leq \sup_{x\in\Rd} \int_{B(x,2r)} |q(z)|G_t^0(x,dz)\,.
\end{align*}
\end{proof}

The following is the aftermath of \eqref{simple} and Lemma~\ref{lem:ineq_equiv_impr}.
\begin{proposition}\label{pK_alternat}
$q\in\pK(X)$ if and only if
\begin{align*}
\lim_{r\to 0^+} \left[ \sup_{x\in\Rd}\int_{B(x,r)}|q(z)|G_t^{\lambda}(x,dz)\right]=0\,,
\end{align*}
for some (all) $t\in(0,\infty)$, $\lambda\geq 0$.
\end{proposition}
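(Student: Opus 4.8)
The goal is to show the equivalence of $q\in\pK(X)$ with the truncated condition
\begin{align*}
\lim_{r\to 0^+} \left[ \sup_{x\in\Rd}\int_{B(x,r)}|q(z)|G_t^{\lambda}(x,dz)\right]=0
\end{align*}
for some (all) $t\in(0,\infty)$, $\lambda\geq 0$. My plan is to run a chain of sandwich inequalities so that all these truncated quantities, for the various ranges of $t$ and $\lambda$, squeeze between one another, pivoting on the two cited results: the pointwise bound \eqref{simple} which controls the dependence on $\lambda$ at fixed $t$, and Lemma~\ref{lem:ineq_equiv_impr} which passes between a finite truncation level $t$ and the infinite-horizon resolvent $G^{1/t}$.

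First I would handle the $\lambda$-dependence. For fixed $t\in(0,\infty]$, \eqref{simple} gives $e^{-\lambda t}\,G_t^0(x,dz)\leq G_t^\lambda(x,dz)\leq G_t^0(x,dz)$, so for each fixed finite $t$ the truncated integrals for different $\lambda\geq 0$ are comparable up to the constant $e^{\lambda t}$; hence the limit in $r$ vanishes for one $\lambda\geq 0$ if and only if it vanishes for all $\lambda\geq 0$, with the truncation level $t$ held fixed. This reduces everything to comparing truncation levels, and it suffices to work with $\lambda=0$, i.e., with $G_t^0$, on the truncated side, together with the untruncated resolvent $G^\lambda=G_\infty^\lambda$ appearing in the definition \eqref{def:pK}.

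Next I would connect a finite truncation to the definition of $\pK$. Lemma~\ref{lem:ineq_equiv_impr} supplies exactly the two-sided link I need: the pointwise bound $G_t^0(x,dz)\leq e\,G^{1/t}(x,dz)$ gives
\begin{align*}
\sup_{x\in\Rd}\int_{B(x,r)}|q(z)|G_t^0(x,dz)\leq e\sup_{x\in\Rd}\int_{B(x,r)}|q(z)|G^{1/t}(x,dz),
\end{align*}
so if $q\in\pK(X)$ (which by Lemma~\ref{lem:consistency} is the vanishing in $r$ of the $G^{1/t}$-integral for the particular $\lambda=1/t$) then the $G_t^0$-truncated quantity vanishes in $r$ as well. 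Conversely, the displayed inequality of Lemma~\ref{lem:ineq_equiv_impr}, namely $(1-e^{-1})\sup_x\int_{B(x,r)}|q|\,G^{1/t}(x,dz)\leq \sup_x\int_{B(x,2r)}|q|\,G_t^0(x,dz)$, shows that vanishing of the $G_t^0$-truncated integral as the radius shrinks forces the $G^{1/t}$-truncated integral to vanish (the factor-of-two change in radius is harmless under the $r\to0^+$ limit), i.e.\ $q\in\pK(X)$. This establishes the equivalence for the single pair $(t,\lambda)=(t,0)$, and I would then feed it back through the first step to cover all $\lambda\geq 0$.

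Finally I would address the quantifier ``some (all) $t$'': once the equivalence with $q\in\pK(X)$ is proved for an arbitrary fixed $t\in(0,\infty)$, it holds for that $t$ if and only if it holds for every $t$, since all of them are equivalent to the single intrinsic condition $q\in\pK(X)$. The only mild subtlety to keep honest is the benign rescaling of the ball radius by factors of $2$ between the upper and lower bounds; since these constants are absorbed in the limit $r\to 0^+$, they present no obstacle. I expect the main point to be simply invoking Lemma~\ref{lem:ineq_equiv_impr} in both directions rather than any genuinely hard estimate — the technical work was already done in establishing that lemma and Lemma~\ref{sup_R2}.
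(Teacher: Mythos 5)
Your proof is correct and follows exactly the route the paper intends: the paper gives no separate argument for Proposition~\ref{pK_alternat} beyond remarking that it is ``the aftermath of \eqref{simple} and Lemma~\ref{lem:ineq_equiv_impr},'' and your write-up spells out precisely that combination (using \eqref{simple} to dispose of the $\lambda$-dependence at fixed finite $t$, the bound $G_t^0\leq e\,G^{1/t}$ for one direction, and the displayed inequality of Lemma~\ref{lem:ineq_equiv_impr} plus Lemma~\ref{lem:consistency} for the other, with the radius doubling absorbed in the limit $r\to 0^+$).
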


The above truncation in time is useful when the distribution $\PP^x(X_s\in dz)$ is well estimated only for $s\in(0,t]$ near every $x\in\Rd$. See
\cite{2014arXiv1403.0912K}, \cite[Theorem~2.4 and~3.1]{MR2524930} for such estimates. In view of \cite[(A2.3), Lemma~4.1 and~4.3]{MR2345907}  Proposition~\ref{pK_alternat} can also be regarded as an extension or counterpart of \cite[Theorem~3.1]{MR2345907}.
We use Proposition \ref{pK_alternat} in  Example \ref{example:1} below.

\begin{remark}\label{lem:KK_sub}
Let $\lambda>0$. Then $\KK(X)=\KK(X^{\lambda})$ and $\pK(X)=\pK(X^{\lambda})$. 
\end{remark}

\begin{lemma}\label{lem:L_unif}
Let $X$ be a L{\'e}vy process in $\Rd$. Assume that
there are $t>0$ and $0\leq M<\infty$ such that for all $x\in\Rd$,
$$
G^0_t |q|(x)=\int_0^t  P_u|q|(x) \,du\leq M\,.
$$
Then there is a constant $0\leq M'<\infty$ independent of $q$ such that
\begin{align}\label{def:L_1_loc_uni}
\sup_{x} \int_{B(x,1)}|q(z)|\,dz\leq M'\,.
\end{align}
\end{lemma}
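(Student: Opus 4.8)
The plan is to convert the pointwise upper bound on $G_t^0|q|$ into a local $L^1$ bound by averaging over the starting point $x$, which is legitimate because the hypothesis is uniform in $x$. First I would record the probabilistic content of the truncated potential. Writing $\mu_u$ for the law of $X_u$ under $\PP^0$ and using translation invariance, $P_u|q|(y)=\EE^0|q(y+X_u)|=\int_{\Rd}|q(y+w)|\,\mu_u(dw)$, so the hypothesis reads
\begin{align*}
\int_{\Rd}|q(y+w)|\,\kappa(dw)\le M\qquad\text{for all }y\in\Rd,
\end{align*}
where $\kappa(dw)=\int_0^t\mu_u(dw)\,du$ is the mean occupation measure of $X$ on $[0,t]$; note that $\kappa$ is a finite measure with total mass $\kappa(\Rd)=t$.

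Next, fixing $x\in\Rd$, I would integrate this inequality over $y\in B(x,2)$ and apply Tonelli (every integrand is nonnegative). After the substitution $z=y+w$ in the inner integral and exchanging the order of integration, the membership $z\in B(x+w,2)$ is equivalent to $w\in B(z-x,2)$, so the left-hand side equals $\int_{\Rd}|q(z)|\,\kappa(B(z-x,2))\,dz$ and we obtain
\begin{align*}
\int_{\Rd}|q(z)|\,\kappa\big(B(z-x,2)\big)\,dz\le M\,|B(0,2)|.
\end{align*}
To extract a local bound I would discard all $z\notin B(x,1)$ and estimate $\kappa(B(z-x,2))$ from below on $B(x,1)$. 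For $z\in B(x,1)$ we have $z-x\in B(0,1)$, which forces $B(0,1)\subseteq B(z-x,2)$ (if $|w|<1$ and $|z-x|<1$ then $|w-(z-x)|<2$); hence $\kappa(B(z-x,2))\ge \kappa(B(0,1))$, uniformly in $z\in B(x,1)$ and in $x$.

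Finally I would check that $c_0:=\kappa(B(0,1))>0$ with $c_0$ depending only on $X$ and $t$. This follows from stochastic continuity of the Lévy process at $0$: since $X_u\to X_0=0$ in probability, $\PP^0(|X_u|<1)\to1$ as $u\to0^+$, so there is $u_0\in(0,t]$ with $\PP^0(|X_u|<1)\ge\tfrac12$ on $(0,u_0]$, and therefore $\kappa(B(0,1))\ge u_0/2>0$. Combining the two displayed bounds gives $c_0\int_{B(x,1)}|q(z)|\,dz\le M\,|B(0,2)|$, i.e.\ \eqref{def:L_1_loc_uni} with $M'=M\,|B(0,2)|/c_0$, where the factor $|B(0,2)|/c_0$ depends only on $X$ and $t$ and not on $q$ or $x$.

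The step I expect to be the main obstacle is precisely the comparison between the occupation measure $\kappa$ and Lebesgue measure. In degenerate cases—such as a dominant deterministic drift, where $\kappa$ is supported on a line—$\kappa$ is singular and cannot be bounded below against Lebesgue measure pointwise, so a naive pointwise argument fails. The averaging over the starting point $x$, which is permitted because the hypothesis holds uniformly in $x$, is exactly what smooths the (possibly singular) $\kappa$ against Lebesgue measure and makes the total mass $\kappa(B(0,1))$—rather than a density of $\kappa$—the quantity that controls the local integral.
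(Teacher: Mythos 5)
Your proof is correct and is essentially the paper's argument: both defeat the possible singularity of the occupation measure by averaging the uniform-in-$x$ hypothesis over the starting point, using Tonelli and translation invariance, and then concluding from a positive lower bound on the time-integrated probability that the process stays within distance $1$ of its start. The only difference is the implementation: the paper tests against a $C_0$ bump function $\varphi$ equal to $1$ on $B(0,1)$ and invokes strong continuity of $(P_u)_{u\geq 0}$ on $C_0(\Rd)$ (Sato, Theorem 31.5) to get $P_u\varphi\geq 1/2$ near the center for small $u$, while you test against $\ind_{B(x,2)}$ and use plain stochastic continuity to get $\kappa(B(0,1))\geq u_0/2$ --- the same quantitative content, marginally more self-contained.
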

\begin{proof}
Let $\varphi\in C_0(\Rd)$ be such that $\varphi\geq 0$, $\varphi=1$ on $B(0,1)$ and $\int_{\Rd}\varphi(x)dx=N<\infty$. For $x_0\in\Rd$ we have, for $h\leq t$,
\begin{align*}
MN &\geq \int_0^h \int_{\Rd} P_u|q|(x)\varphi(x_0-x)\,dxdu=\int_0^h \int_{\Rd} \EE^0 |q(X_u+x)| \varphi (x_0-x)\,dxdu\\
&=\int_0^h \EE^0 \left[\int_{\Rd} |q(X_u+x)|\varphi(x_0-x)\,dx\right]du
=\int_0^h \EE^0 \left[\int_{\Rd} |q(z)|\varphi(X_u+x_0-z)\,dz\right]du\\
&=\int_0^h \int_{\Rd} |q(z)| P_u\varphi(x_0-z)\,dzdu\geq \int_0^h \int_{B(x_0,1)} |q(z)| P_u\varphi(x_0-z)\,dzdu\\
&\geq (\varepsilon/2) \int_{B(x_0,1)}|q(z)|\,dz\,,
\end{align*}
where $0<\varepsilon\leq h$ is such that $\|P_u\varphi -\varphi \|_{\infty}\leq 1/2$ for $u\leq \varepsilon$ (see \cite[Theorem 31.5]{MR1739520}).
\end{proof}

We write $q\in (L_{loc}^1)_{uni}(\Rd)$ if \eqref{def:L_1_loc_uni} holds. We collect basic properties of $\KK(X)$ and $\pK(X)$ for a L{\'e}vy process $X$ in $\Rd$.

\begin{proposition}\label{calInKato}
We have
\begin{enumerate}
\item $\mathcal{K}\subseteq \mathbb{K} \subseteq (L^1_{loc})_{uni}(\Rd)$ for every L{\'e}vy process\,, \label{incl:1}
\item $B(\Rd)\subseteq \KK$ for every L{\'e}vy process\,, \label{incl:2}
\item $B(\R^d)\subseteq \pK$ for every non-Poisson L{\'e}vy process\,, \label{incl:3}
\item $\mathcal{K}=\{0\}$ and $\KK=B(\Rd)$ for every compound Poisson process\,. \label{incl:4}
\end{enumerate}
\end{proposition}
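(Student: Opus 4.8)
The four assertions vary in difficulty, and I would order them so the easy ones feed the hard one. I would begin with~\eqref{incl:2}: if $|q|\le C$, then $G^0_t|q|(x)=\EE^x\int_0^t|q(X_u)|\,du\le Ct$ uniformly in $x$, so $q\in\KK$ at once. The inclusion $\KK\subseteq (L^1_{loc})_{uni}(\Rd)$ in~\eqref{incl:1} is then immediate from Lemma~\ref{lem:L_unif}: if $q\in\KK$ then $\sup_x G^0_t|q|(x)\le 1$ for some small $t>0$, and the lemma yields~\eqref{def:L_1_loc_uni}.

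The heart of the matter is the remaining inclusion $\pK\subseteq\KK$ of~\eqref{incl:1}, and this is where I expect the real work. Writing $M_t=\sup_x G^0_t|q|(x)$ and $K_r=\sup_x\int_{B(x,r)}|q(z)|\,G^\lambda(x,dz)$, I would fix $x$, set $\tau=\tau_{B(x,r)}$, and split $\EE^x\int_0^t|q(X_u)|\,du$ at $t\wedge\tau$. On $\{u<\tau\}$ one has $X_u\in B(x,r)$, so for $t\le1$ the first piece is at most $e^{\lambda}\int_{B(x,r)}|q|\,G^\lambda(x,dz)\le e^{\lambda}K_r$; for the second piece the strong Markov property at $\tau$ gives $\EE^x[\tau<t;\EE^{X_\tau}\!\int_0^{t-\tau}|q(X_v)|\,dv]\le \PP^x(\tau<t)\,M_t$. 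Taking suprema and using translation invariance, with $p(r,t)=\PP^0(\tau_{B(0,r)}<t)$, produces the self-improving inequality
\[
M_t\le e^{\lambda}K_r+p(r,t)\,M_t.
\]
Because absorbing the last term presupposes $M_t<\infty$, I would first run the estimate for the bounded truncations $|q|\wedge n$, which lie in $\KK$ by~\eqref{incl:2} (so their $M_t$ is finite), and pass to the limit by monotone convergence to obtain $M_t\le e^{\lambda}K_r/(1-p(r,t))$ whenever $p(r,t)<1$. Finally, given $\varepsilon>0$ I would pick $r$ with $e^{\lambda}K_r<\varepsilon$ (possible since $q\in\pK$) and note that for this fixed $r$ one has $p(r,t)\to0$ as $t\to0^+$, since right-continuity forces $\tau_{B(0,r)}>0$ a.s.; hence $\limsup_{t\to0^+}M_t\le\varepsilon$ and $q\in\KK$. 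This is the L\'evy-process incarnation of the implication \eqref{C3}$\Rightarrow$\eqref{C2} under~\eqref{H2} of Figure~\ref{fig}, applied to $X^{\lambda}$, and it is the main obstacle because of the a priori finiteness issue and the excursion term.

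For~\eqref{incl:3}, boundedness reduces everything to $G^\lambda(0,B(0,r))\to0$ as $r\to0^+$, since by translation invariance $\int_{B(x,r)}|q|\,G^\lambda(x,dz)\le\|q\|_\infty\,G^\lambda(0,B(0,r))$. By dominated convergence $G^\lambda(0,B(0,r))\downarrow\int_0^\infty e^{-\lambda u}\PP^0(X_u=0)\,du$, and for a non-Poisson process Lemma~\ref{lem:al_sure_0} gives $\PP^0(X_u=0)=0$ for all but countably many $u$, so this limit is $0$.

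Finally~\eqref{incl:4}. For a compound Poisson process with jump intensity $c=\nu(\Rd)$ one has $\PP^x(X_u=x)\ge e^{-cu}$, whence the pointwise bounds $G^0_t|q|(x)\ge|q(x)|(1-e^{-ct})/c$ and $\int_{B(x,r)}|q|\,G^\lambda(x,dz)\ge|q(x)|/(\lambda+c)$, the latter because $x\in B(x,r)$ and $G^\lambda(x,\cdot)$ carries an atom at $x$ of mass $\ge1/(\lambda+c)$. The first bound together with~\eqref{incl:2} forces $\KK=B(\Rd)$; the second, being independent of $r$, forces $\sup_x|q(x)|=0$ for every $q\in\pK$, i.e.\ $\pK=\{0\}$.
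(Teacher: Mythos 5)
Your proof is correct, and for the one nontrivial inclusion it takes a genuinely different route from the paper. Parts \eqref{incl:2}, \eqref{incl:3} and \eqref{incl:4} coincide with the paper's argument (the paper also deduces \eqref{incl:3} from Lemma~\ref{lem:al_sure_0} via $G^{\lambda}(\{0\})=0$ and finiteness of $G^{\lambda}$, and \eqref{incl:4} from the atom $G^{\lambda}(\{0\})\geq(\lambda+\nu(\Rd))^{-1}$ together with $\int_0^t P_u|q|(x)\,du\geq |q(x)|\int_0^t e^{-\nu(\Rd)u}du$). The difference is in $\pK\subseteq\KK$: the paper does not prove this directly, but reads $q\in\pK(X)$ as \eqref{C1} for the subprocess $X^{\lambda}$, invokes Zhao's theorem (Figure~\ref{fig}: \eqref{C1} implies \eqref{C3} always, and \eqref{C3} implies \eqref{C2} under \eqref{H2}), verifies \eqref{H2} for $X^{\lambda}$ via Remark~\ref{H1H2} and Lemma~\ref{lem:h1h2_sub}, and then transfers back by $\KK(X^{\lambda})=\KK(X)$ (Remark~\ref{lem:KK_sub}). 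You instead inline the whole chain as a single self-contained estimate: splitting $\EE^x\int_0^t|q(X_u)|\,du$ at the exit time of $B(x,r)$, using the strong Markov property and translation invariance to get $M_t\leq e^{\lambda}K_r+p(r,t)\,M_t$, and — importantly — handling the a priori finiteness of $M_t$ by truncating $|q|\wedge n$ and passing to the limit; the fact $p(r,t)\to 0$ as $t\to 0^+$ that you derive from right-continuity of paths is exactly the content $h_2(X)=0$ cited by the paper. What your approach buys is independence from \cite{MR1132313} and from the $\lambda$-subprocess formalism (Lemmas~\ref{lem:h1h2_sub} and Remark~\ref{lem:KK_sub} become unnecessary); what the paper's approach buys is brevity and the placement of the argument inside Zhao's general framework for Hunt processes, which the rest of the paper exploits anyway.
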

\begin{proof}
The inclusion $\mathbb{K} \subseteq (L^1_{loc})_{uni}(\Rd)$ follows from Lemma \ref{lem:L_unif}. To complete {\it \ref{incl:1}}. we let $q\in \pK(X)$, which reads as \eqref{C1} for $X^{\lambda}$, $\lambda>0$. 
By Remark~\ref{H1H2} and Lemma \ref{lem:h1h2_sub}, 
\eqref{H2} holds for $X^{\lambda}$ and 
thus the result of Zhao on Figure~\ref{fig} implies that \eqref{C2} holds for $X^{\lambda}$,
i.e., $q\in\KK(X^{\lambda})=\KK(X)$ (see Remark~\ref{lem:KK_sub}).
Plainly, {\it\ref{incl:2}}. holds.
Now, let $X$ be non-Poisson. By Lemma \ref{lem:al_sure_0} we get $P_t(\{0\})=0$ for almost all $t>0$ and consequently $G^\lambda(\{0\})=0$. Further, since $G^{\lambda}(dx)$ is a finite measure, for $q\in B(\Rd)$ we have
$$\lim_{r\to\infty} \sup_{x\in\Rd} \int_{B_r} |q(x+z)|G^{\lambda}(dz)\leq \lim_{r\to 0^+}G^{\lambda}(B_r) \sup_{x\in\Rd} |q(x)| =G(\{0\}) \sup_{x\in\Rd} |q(x)|=0\,,$$
and {\it\ref{incl:3}}. holds.
Finally, if $X$ is a compound Poisson process, then $G^\lambda(\{0\})\geq(\lambda+\nu(\Rd))^{-1}>0$ and for every $r>0$
$$\sup_{x\in\Rd}\int_{B_r}|q(x+z)|G^\lambda(dz)\geq \sup_{x\in \Rd}|q(x)|(\lambda+\nu(\Rd))^{-1}.$$
Hence $q\in \mathcal{K}$ if and only if $q\equiv0$. Moreover,
$$
\sup_{x\in\Rd }\int_0^t P_u|q|(x)du\geq \sup_{x\in\Rd} |q(x)| \int_0^t e^{-\nu(\Rd)u}du\,,
$$
which proves {\it\ref{incl:4}}.
\end{proof}

\section{Main Theorems}\label{sec:thm}
In this section we consider a L{\'e}vy process $X$ in $\Rd$ and we pursue according to the cases of Section~\ref{sec:scheme}. 
Before that, we prove  Corollary~\ref{cor:time} directly from Theorem~\ref{thm:Gen_1}.

\begin{proof}[Proof of Corollary~\ref{cor:time}]
Consider a L{\'e}vy process $Y$ in $\R^{d+1}=\R\times \Rd$ defined by $Y_t=(t,X_t)$, $t\geq 0$, where $X$ is an arbitrary L{\'e}vy process in $\Rd$, $d\geq 1$.
Observe that for $(s,x)\in\R^{d+1}$ and a Borel set $B\subseteq \R^{d+1}$ we have $\PP^{(s,x)}(Y_u\in B)= \EE^x[\ind_{B}(s+u,X_u)]$, $u\geq 0$. Since for $Y$ $0$ is not regular for $\{0\}$ Theorem~\ref{thm:Gen_1} applies to $Y$. Finally, we use \eqref{starteqiv} taking into account that 
$\ind_{B_{d+1}((s,x),r)}(s+u,X_u)$, where $B_{d+1}(x,r)$ denotes a ball in $\R^{d+1}$, can be replaced with $\ind_{[0,r)}(u)\ind_{B(x,r)}(X_u)$ and that $e^{-\lambda u}$ is comparable with  one for $u\in[0,r)$.
\end{proof}

\subsection{Under \Hyp}\label{UnderH0}
In this subsection we consider a L{\'e}vy process $X$ satisfying \Hyp{}.

\begin{theorem}\label{prop:KatoGeneral}
For $d>1$ or $d=1$ under {\rm (A)} we have $\pK(X)=\KK(X)$.
\end{theorem}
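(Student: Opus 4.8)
The inclusion $\pK(X)\subseteq\KK(X)$ is already contained in Proposition~\ref{calInKato}(\ref{incl:1}) and holds for every L\'evy process, so the entire content of the statement is the reverse inclusion $\KK(X)\subseteq\pK(X)$. My plan is to realize both classes through Zhao's conditions for the $\lambda$-subprocess $X^{\lambda}$ (with $\lambda>0$ fixed) and to travel along Figure~\ref{fig} from \eqref{C2} to \eqref{C1}, namely along the route $\eqref{C2}\Rightarrow\eqref{C3}\Rightarrow\eqref{C1}$ governed by \eqref{H1} and \eqref{H3}. For the additive functional \eqref{af}, condition \eqref{C1} for $X^{\lambda}$ is precisely \eqref{starteqiv}, i.e. $q\in\pK(X)=\pK(X^{\lambda})$, while \eqref{C2} for $X^{\lambda}$ is the defining condition of $\KK(X^{\lambda})=\KK(X)$ (the killing at an independent exponential clock inserting exactly the factor $e^{-\lambda u}$); the identifications $\KK(X)=\KK(X^{\lambda})$ and $\pK(X)=\pK(X^{\lambda})$ are Remark~\ref{lem:KK_sub}, and Proposition~\ref{gen_KK_equiv} confirms the translation of \eqref{C2}.

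First I would identify the two listed cases with polarity of $\{0\}$. Under \Hyp{} the process is non-Poisson by Remark~\ref{opisH0}; for $d>1$ Proposition~\ref{polar} gives that $\{0\}$ is polar, whereas for $d=1$ the assumption (A) reads $h^{\lambda}\equiv 0$, which by Remark~\ref{rem:0_pol_h} is again polarity of $\{0\}$. Thus in both situations $X$ is non-Poisson and $\{0\}$ is polar.

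Next I would verify the two hypotheses needed for the passage $\eqref{C2}\Rightarrow\eqref{C3}\Rightarrow\eqref{C1}$ applied to $X^{\lambda}$. For \eqref{H1}: since $X$ is non-Poisson, $h_1(X)=0$ by Remark~\ref{H1H2}, and $h_1(X^{\lambda})=h_1(X)=0$ by Lemma~\ref{lem:h1h2_sub}, so \eqref{H1} holds for $X^{\lambda}$. For \eqref{H3}: since $X$ is non-Poisson and $\{0\}$ is polar, Corollary~\ref{lem:H3} gives that \eqref{H3} holds for $X^{\lambda}$ (in fact $h_3(X^{\lambda})=0$). With both \eqref{H1} and \eqref{H3} in hand, Figure~\ref{fig} for $X^{\lambda}$ yields $\eqref{C2}\Rightarrow\eqref{C3}\Rightarrow\eqref{C1}$. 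Unwinding the dictionary above, $q\in\KK(X)$ forces \eqref{C1} for $X^{\lambda}$, that is $q\in\pK(X)$, which completes $\KK(X)\subseteq\pK(X)$.

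The only genuinely nontrivial input is the verification of \eqref{H3} for $X^{\lambda}$, and this is exactly where the hypotheses of the theorem are consumed: by Corollary~\ref{lem:H3} (equivalently Proposition~\ref{prop:summ}) the validity of \eqref{H3} for $X^{\lambda}$ is equivalent to the polarity of $\{0\}$, so the cases $d>1$ and $d=1$ under (A) are precisely those in which the full strength of Figure~\ref{fig} becomes available. I expect no further obstacle, since \eqref{H1} is automatic for non-Poisson processes and the translation between the two Kato classes and Zhao's conditions \eqref{C1}--\eqref{C2} is routine bookkeeping from the cited lemmas.
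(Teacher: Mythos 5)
Your proposal is correct and follows essentially the same route as the paper's own proof: reduce to $\KK(X)\subseteq\pK(X)$ via Proposition~\ref{calInKato}, translate into Zhao's conditions \eqref{C2} and \eqref{C1} for $X^{\lambda}$ using Remark~\ref{lem:KK_sub}, secure \eqref{H1} from Remark~\ref{H1H2} and Lemma~\ref{lem:h1h2_sub}, and reduce \eqref{H3} to polarity of $\{0\}$ via Corollary~\ref{lem:H3}, settled by Proposition~\ref{polar} for $d>1$ and by assumption (A) for $d=1$. The only cosmetic difference is that you spell out the identification of case (A) with polarity through Remark~\ref{rem:0_pol_h}, which the paper leaves implicit.
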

\begin{proof}
By Proposition \ref{calInKato} we concentrate on $\KK(X)\subseteq \pK(X)$. Let $q\in\KK(X)=\KK(X^{\lambda})$, $\lambda>0$. This reads as \eqref{C2} for $X^{\lambda}$. Since $X$ is non-Poisson, by Remark \ref{H1H2} and Lemma \ref{lem:h1h2_sub} the hypothesis \eqref{H1} holds for $X^{\lambda}$. To obtain \eqref{C1} for $X^{\lambda}$, that is to prove $q\in\pK(X)$, it remains to verify \eqref{H3} for $X^{\lambda}$.
In view of Corollary \ref{lem:H3} it suffices to justify that $\{0\}$ is a polar set. For $d>1$ this is assured by Proposition \ref{polar}. For $d=1$ it is our assumption.
\end{proof}

From now on in this subsection we discuss the case of $d=1$.
For simplicity we recall from \cite[Theoreme 7, 1, 5, 6 and 8]{MR0368175} the following facts.

\begin{lemma}\label{lem:Bret}
Let $d=1$ and  $\int_{\R}\mathrm{Re}\left(\frac{1}{\lambda+\psi(z)}\right)dz<\infty$, $\lambda>0$.
Then $G^{\lambda}(dz)$ has a bounded density $G^{\lambda}(z)=k^{\lambda}\, h^{\lambda}(z)$, $z\in\R$, with respect to the  Lebesgue measure which is continuous on $\R\setminus\{0\}$. Further, $G^{\lambda}(z)$ is continuous at $0$ if and only if $0$ is regular for $\{0\}$ (i.e. $h^{\lambda}(0)=1$), and then $0<h^{\lambda}(z)\leq 1$ for $z\in \R$.
\end{lemma}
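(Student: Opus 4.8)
The plan is to treat this as a compilation of Bretagnolle's results and to make the translation transparent by laying out the self-contained skeleton first. First I would pass to the Fourier side: by Fubini and $\EE^0 e^{i\xi X_u}=e^{-u\psi(\xi)}$,
\[
\widehat{G^\lambda}(\xi)=\int_\R e^{i\xi z}\,G^\lambda(dz)=\int_0^\infty e^{-\lambda u}e^{-u\psi(\xi)}\,du=\frac{1}{\lambda+\psi(\xi)},\qquad \xi\in\R .
\]
Since $\mathrm{Re}\,\psi\ge 0$, one has $\mathrm{Re}\,(\lambda+\psi)^{-1}=(\lambda+\mathrm{Re}\,\psi)|\lambda+\psi|^{-2}\ge \lambda\,|\lambda+\psi|^{-2}\ge 0$, so the hypothesis $\int_\R \mathrm{Re}\,(\lambda+\psi)^{-1}\,d\xi<\infty$ yields both $(\lambda+\psi)^{-1}\in L^2(\R)$ and $|\lambda+\psi|^{-2}\in L^1(\R)$. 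By Plancherel $G^\lambda$ then has an $L^2$ density $G^\lambda(\cdot)$; applying the same to the dual process $\hat X=-X$ (whose exponent is $\psi(-\cdot)$ and whose potential density is $G^\lambda(-\cdot)$) and convolving, the Fourier transform of $G^\lambda(\cdot)*G^\lambda(-\cdot)$ equals $|\lambda+\psi|^{-2}\in L^1$, so this convolution is bounded and continuous. This is the analytic input that makes single points non-polar.

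Next I would extract the probabilistic identity by the strong Markov property at $T=T_{\{x\}}$. Splitting the $\lambda$-potential at $T$ and using $X_T=x$ on $\{T<\infty\}$ gives, for Borel $A$,
\[
G^\lambda(0,A)=\EE^0\!\left[\int_0^T e^{-\lambda t}\ind_A(X_t)\,dt\right]+h^\lambda(x)\,G^\lambda(x,A),
\]
and by translation invariance $G^\lambda(x,A)=G^\lambda(0,A-x)$. Dividing by the Lebesgue measure of shrinking balls $A=B(x,\varepsilon)$ and passing to the limit at continuity points produces the relation $G^\lambda(x)=k^\lambda h^\lambda(x)$ with $k^\lambda$ the normalising value of the density; from $h^\lambda\le 1$ this also forces $G^\lambda(\cdot)\le k^\lambda$, i.e. boundedness. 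Continuity of $G^\lambda(\cdot)$ on $\R\setminus\{0\}$ then transfers to $h^\lambda$, while at $0$ the density is continuous exactly when $h^\lambda(0)=\lim_{z\to0}h^\lambda(z)$; by Remark~\ref{rem:0_reg_h} this is the regularity of $0$ for $\{0\}$, equivalently $h^\lambda(0)=1$, which is precisely case (C) of Section~\ref{sec:scheme}. Under regularity $h^\lambda$ stays strictly positive on all of $\R$, giving $0<h^\lambda(z)\le 1$.

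The step I expect to be the genuine obstacle is the pointwise (not merely a.e.) identification $G^\lambda(\cdot)=k^\lambda h^\lambda(\cdot)$, together with the upgrade from an $L^2$/convolution-bounded density to a bona fide bounded density that is continuous off the origin: the killed-before-hitting term above only yields an inequality, and controlling it—while using time reversal correctly so that the density of $X$ pairs with the hitting functional of $X$ itself—is delicate. Rather than reproduce this in full I would, as the phrasing of the statement suggests, invoke Bretagnolle~\cite[Theoreme 7, 1, 5, 6 and 8]{MR0368175}, whose arguments establish exactly these points, and merely reconcile his normalisation of the resolvent density and hitting kernel with the present $G^\lambda$ and $h^\lambda$.
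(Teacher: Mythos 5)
Your proposal is correct and ends up exactly where the paper does: the paper offers no proof of this lemma at all, but simply recalls it from Bretagnolle \cite[Theoreme 7, 1, 5, 6 and 8]{MR0368175}, which is precisely the citation you invoke for the delicate steps (pointwise identification $G^\lambda=k^\lambda h^\lambda$ and the continuity dichotomy at $0$). Your preliminary scaffolding --- the Fourier computation $\widehat{G^\lambda}=(\lambda+\psi)^{-1}$, the bound $\mathrm{Re}\,(\lambda+\psi)^{-1}\geq\lambda|\lambda+\psi|^{-2}$, and the strong Markov splitting at $T_{\{x\}}$ --- is sound and faithful to how Bretagnolle's argument actually runs, so it is a harmless (indeed clarifying) addition rather than a deviation.
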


We investigate the properties of $G_t^{\lambda}(dz)$, $\lambda >0$, $t\in(0,\infty)$.

\begin{lemma}\label{lem:Glt}
Let $d=1$ and  $\int_{\R}\mathrm{Re}\left(\frac{1}{\lambda+\psi(z)}\right)dz<\infty$, $\lambda>0$. Then $G^\lambda_t(dz)$ has a bounded density $G_t^{\lambda}(z)$ with respect to the Lebesgue measure which is lower semi-continuous on $\R\setminus\{0\}$.
\end{lemma}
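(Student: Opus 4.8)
The plan is to reduce everything, by translation invariance of $X$, to the density of the measure $G_t^{\lambda}(0,dz)$, and to isolate the contribution of the times after $t$. First I would use independence of increments (Chapman--Kolmogorov, $P_s(0,\cdot)=P_t(0,\cdot)*P_{s-t}(0,\cdot)$) together with the substitution $u=s-t$ to write
\begin{align*}
G_t^{\lambda}(0,dz)
&=G^{\lambda}(0,dz)-\int_t^{\infty}e^{-\lambda s}P_s(0,dz)\,ds\\
&=G^{\lambda}(0,dz)-e^{-\lambda t}\int_{\R}G^{\lambda}(y,dz)\,P_t(0,dy)\,.
\end{align*}
By Lemma~\ref{lem:Bret} the measure $G^{\lambda}(0,dz)=G^{\lambda}(z)\,dz$ has a bounded density that is continuous on $\R\setminus\{0\}$, and by translation invariance $G^{\lambda}(y,dz)=G^{\lambda}(z-y)\,dz$. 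Hence the subtracted term has the density $T(z)=e^{-\lambda t}\int_{\R}G^{\lambda}(z-y)\,P_t(0,dy)$, so that
$$
G_t^{\lambda}(z)=G^{\lambda}(z)-T(z)
$$
is a density of $G_t^{\lambda}(0,dz)$; it is bounded since $0\le T(z)\le G^{\lambda}(z)$ and $G^{\lambda}$ is bounded.

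For the lower semicontinuity the only delicate point is the value of $G^{\lambda}$ at the origin, which the measure determines only up to a Lebesgue-null set. I would fix the upper-semicontinuous representative $\hat G^{\lambda}$ of the potential density: it equals the continuous density off $0$ and takes at $0$ the value $\limsup_{z\to 0}G^{\lambda}(z)$, which is finite because $G^{\lambda}$ is bounded. Then $\hat G^{\lambda}$ is upper semicontinuous on all of $\R$, and, since it is bounded and $P_t(0,dy)$ is a finite measure, the reverse Fatou lemma yields that $T$ is upper semicontinuous: for $z_n\to z_0$,
\begin{align*}
\limsup_{n\to\infty}\int_{\R}\hat G^{\lambda}(z_n-y)\,P_t(0,dy)
&\le\int_{\R}\limsup_{n\to\infty}\hat G^{\lambda}(z_n-y)\,P_t(0,dy)\\
&\le\int_{\R}\hat G^{\lambda}(z_0-y)\,P_t(0,dy)\,,
\end{align*}
the last step using upper semicontinuity of $\hat G^{\lambda}$ at each point $z_0-y$. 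Fixing now $z_0\neq 0$, where $\hat G^{\lambda}$ is continuous, the representative $G_t^{\lambda}=\hat G^{\lambda}-T$ is a continuous function minus an upper-semicontinuous one, hence lower semicontinuous at $z_0$. This gives lower semicontinuity on $\R\setminus\{0\}$.

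The main obstacle is exactly why the value of $G^{\lambda}$ at $0$ cannot be chosen carelessly: $P_t(0,dy)$ may carry atoms (already for a pure drift one has $P_t(0,\cdot)=\delta_{\gamma_0 t}$), and at an atom $y_0$ the shifted singularity of $G^{\lambda}$ is felt by $T$ at $z=y_0$. With the ``$k^{\lambda}h^{\lambda}$'' version of the density the difference $\hat G^{\lambda}-T$ would then take the \emph{upper} value across such a jump, destroying lower semicontinuity; only the upper-semicontinuous normalization of $G^{\lambda}(0)$ makes $T$ upper semicontinuous there and forces $G_t^{\lambda}$ onto its lower value. Finally, since $G_t^{\lambda}(z)\ge 0$ for Lebesgue-a.e.\ $z$, I would pass to $\max(G_t^{\lambda},0)$, which remains lower semicontinuous (a maximum of two lower-semicontinuous functions) and changes $G_t^{\lambda}$ only on a Lebesgue-null set; this produces the desired bounded, nonnegative, lower-semicontinuous density.
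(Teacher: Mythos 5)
Your proof is correct and follows essentially the same route as the paper's: the same Markov-property decomposition $G_t^{\lambda}(dz)=G^{\lambda}(dz)-e^{-\lambda t}\int_{\R} G^{\lambda}(dz-y)\,P_t(dy)$, the same upper-semicontinuous normalization of the potential density (value $\limsup_{z\to 0}G^{\lambda}(z)$ at the origin), and the same reverse-Fatou/bounded-convergence step, which the paper phrases as a direct $\liminf$ computation isolating the atom of $P_t$ at $x_0$ while you phrase it as upper semicontinuity of the convolution term everywhere. Your final passage to $\max(G_t^{\lambda},0)$ is harmless but not required by the statement.
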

\begin{proof}
According to Lemma \ref{lem:Bret} we define $F^{\lambda}(z)=G^{\lambda}(z)$ on $\R\setminus \{0\}$ and $F^{\lambda}(0)=\limsup_{z\to 0} F^{\lambda}(z)$. Then $F^{\lambda}(z)$ is a density of $G^{\lambda}(dz)$. Since $G_t^{\lambda}(B)\leq G^{\lambda}(B)$ and
$G_t^{\lambda}(B)= G^{\lambda}(B)- e^{-\lambda t} \int_{\R} G^{\lambda}(B-z)P_t(dz)$,
$G_t^{\lambda}(dx)$ is absolutely continuous and its density $G_t^{\lambda}(x)$ can be chosen to satisfy
\begin{align}\label{GtLambda}
G_t^{\lambda}(x)= F^{\lambda}(x)- e^{-\lambda t} \int_{\R} F^{\lambda}(x-z)P_t(dz).
\end{align}
To prove the semi-continuity we observe that
for $x_0\in\R\setminus\{0\}$,
\begin{align*}
G_t^{\lambda}(x)=F^{\lambda}(x)- e^{-\lambda t}\( \int_{\R\backslash\{x_0\}} F^{\lambda}(x-z) P_t(dz)+F^\lambda(x-x_0)P_t(\{x_0\})\),
\end{align*}
and by the bounded convergence theorem
\begin{align*}
\liminf_{x\to x_0} G_t^{\lambda} (x) &= F^{\lambda}(x_0)-e^{-\lambda t}\( \int_{\R\backslash\{x_0\}}\lim_{x\to x_0} F^{\lambda}(x-z)P_t(dz)+\limsup_{x\to x_0}F^\lambda(x-x_0)P_t(\{x_0\})\)\\
&=G_t^{\lambda}(x_0)\,.
\end{align*}
\end{proof}

\begin{theorem}\label{bouVar1}
For $d=1$ under {\rm (B)} we have
$$\pK(X)=\KK(X)=\left\{q\colon\lim_{r\to 0^+}\sup_{x\in\R}\int_{B(x,r)}|q(z)|dz=0\right\}.$$
\end{theorem}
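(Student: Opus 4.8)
The plan is to prove the chain of inclusions $L\subseteq\pK(X)\subseteq\KK(X)\subseteq L$, where $L=\{q\colon\lim_{r\to0^{+}}\sup_{x}\int_{B(x,r)}|q(z)|\,dz=0\}$ is the class on the right-hand side of the asserted identity; all three equalities then follow at once. I may assume the drift satisfies $\gamma_0>0$, since the reflection $z\mapsto-z$ sends $X$ to a process of the same type {\rm (B)} with opposite drift and leaves $\KK$, $\pK$ and $L$ invariant. Under {\rm (B)} one has $\int_{\R}\mathrm{Re}(1/(\lambda+\psi(z)))\,dz<\infty$, so Lemma~\ref{lem:Bret} provides a bounded density $G^{\lambda}(\cdot)=k^{\lambda}h^{\lambda}(\cdot)$ of $G^{\lambda}(dz)$, continuous on $\R\setminus\{0\}$; by translation invariance $G^{\lambda}(x,dz)=G^{\lambda}(z-x)\,dz$. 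The decisive point is the one-sided behaviour at the origin: the finite-variation drift (Remark~\ref{rem:var}, $X_s/s\to\gamma_0$) together with Bretagnolle's analysis forces $h^{\lambda}(x)\to1$, hence $G^{\lambda}(x)$ tends to a positive limit, equal to $1/\gamma_0$, as $x\to0^{+}$, for every $\lambda>0$. I record the two consequences I will use: $C:=\sup_z G^{\lambda}(z)<\infty$, and for each $\lambda$ there is $\delta_{\lambda}>0$ with $G^{\lambda}(x)\ge 1/(2\gamma_0)$ for $x\in(0,\delta_{\lambda})$.

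The middle inclusion $\pK(X)\subseteq\KK(X)$ holds for every L\'evy process by Proposition~\ref{calInKato}, so nothing is needed there. For $L\subseteq\pK(X)$ I only use the global upper bound $C$: for $q\in L$,
\begin{equation*}
\sup_{x}\int_{B(x,r)}|q(z)|\,G^{\lambda}(x,dz)=\sup_x\int_{B(x,r)}|q(z)|\,G^{\lambda}(z-x)\,dz\le C\,\sup_{x}\int_{B(x,r)}|q(z)|\,dz\xrightarrow[r\to0^{+}]{}0,
\end{equation*}
which is exactly $q\in\pK(X)$ in view of \eqref{def:pK}.

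The essential inclusion is $\KK(X)\subseteq L$, where the drift-side lower bound enters. By Proposition~\ref{gen_KK_equiv}, $q\in\KK(X)$ is equivalent to $\sup_x G^{\lambda}|q|(x)\to0$ as $\lambda\to\infty$, where $G^{\lambda}|q|(x)=\int_{\R}|q(z)|\,G^{\lambda}(z-x)\,dz$. Discarding all of the integral except the drift side and inserting $G^{\lambda}(y)\ge 1/(2\gamma_0)$ on $(0,\delta_{\lambda})$ yields
\begin{equation*}
\sup_{x}\int_{x}^{x+\delta_{\lambda}}|q(z)|\,dz\le 2\gamma_0\,\sup_{x}G^{\lambda}|q|(x)\xrightarrow[\lambda\to\infty]{}0.
\end{equation*}
After replacing $\delta_{\lambda}$ by $\min(\delta_{\lambda},1/\lambda)$ I may assume $\delta_{\lambda}\to0$; since $r\mapsto\sup_x\int_x^{x+r}|q(z)|\,dz$ is non-decreasing and tends to $0$ along the sequence $\delta_{\lambda}\to0$, it tends to $0$ as $r\to0^{+}$. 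The change of variable $x\mapsto x-r$ gives $\sup_x\int_{x-r}^{x}|q|=\sup_x\int_{x}^{x+r}|q|$, so $\sup_x\int_{B(x,r)}|q|\le 2\sup_x\int_{x}^{x+r}|q|\to0$, that is $q\in L$. This closes the chain and proves $\pK(X)=\KK(X)=L$.

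The main obstacle is precisely the input highlighted in the first paragraph: a positive lower bound for the resolvent density $G^{\lambda}$ on a one-sided neighbourhood of the origin that is uniform for large $\lambda$ (namely $G^{\lambda}(0^{+})=1/\gamma_0$). This is the feature singling out case {\rm (B)}: in {\rm (A)} the set $\{0\}$ is polar and no such lower bound is available, while in {\rm (C)} the density is continuous at $0$ with genuinely two-sided behaviour, so neither the clean one-sided lower bound nor the reflection reduction applies in the same way. Establishing $G^{\lambda}(0^{+})=1/\gamma_0$ rests on Bretagnolle's fine study of $h^{\lambda}$ for finite-variation processes with non-zero drift; once it is in hand, everything else is soft --- the global upper bound gives $L\subseteq\pK(X)$, Proposition~\ref{calInKato} gives $\pK(X)\subseteq\KK(X)$, and only the drift-side lower bound, combined with the resolvent characterisation of $\KK(X)$ and the monotonicity-and-shift argument, yields the reverse $\KK(X)\subseteq L$.
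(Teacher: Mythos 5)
Your skeleton (the chain $L\subseteq\pK(X)\subseteq\KK(X)\subseteq L$, the normalisation $\gamma_0>0$, the use of the bounded density from Lemma~\ref{lem:Bret} for $L\subseteq\pK(X)$, and Proposition~\ref{calInKato} for the middle inclusion) coincides with the paper's proof. The divergence, and the genuine gap, is in the decisive inclusion $\KK(X)\subseteq L$: your entire argument rests on the claim that $G^{\lambda}(x)$ has a one-sided limit $G^{\lambda}(0^{+})=1/\gamma_0$ for every $\lambda>0$, which you do not prove but delegate to ``Bretagnolle's analysis''. Nothing in the paper's toolkit supplies this: Lemma~\ref{lem:Bret} gives only boundedness, continuity on $\R\setminus\{0\}$ and the relation $G^{\lambda}=k^{\lambda}h^{\lambda}$, and the case-(B) classification records only the two-sided facts $\liminf_{x\to0}h^{\lambda}(x)=h^{\lambda}(0)$ and $\limsup_{x\to0}h^{\lambda}(x)=1$; none of this even yields the \emph{existence} of $G^{\lambda}(0^{+})$, let alone its value. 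Worse, the equality $G^{\lambda}(0^{+})=1/\gamma_0$ is false in general: what is true is the jump relation $G^{\lambda}(0^{+})-G^{\lambda}(0^{-})=1/\gamma_0$, so whenever the process with drift $\gamma_0>0$ can also hit points to the left of the origin (e.g.\ positive drift plus negative jumps) one has $h^{\lambda}(0^{-})>0$ and $G^{\lambda}(0^{+})=k^{\lambda}>1/\gamma_0$. The inequality you actually need, $G^{\lambda}\geq 1/(2\gamma_0)$ on some right neighbourhood of $0$ \emph{with a constant independent of $\lambda$}, is true but is exactly the hard quantitative content: since your argument lets $\lambda\to\infty$ through Proposition~\ref{gen_KK_equiv}(ii), any lower bound whose constant degenerates as $\lambda\to\infty$ (say like $1/\lambda$) would render the estimate $\sup_{x}\int_{x}^{x+\delta_{\lambda}}|q|\leq 2\gamma_0\sup_{x}G^{\lambda}|q|(x)$ useless. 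So the one sentence you outsource is, in effect, the whole theorem.

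For comparison, the paper never touches one-sided limits of $G^{\lambda}$ or the constant $k^{\lambda}$. It fixes $\lambda$ and lets $t\to0^{+}$ in the truncated kernel: from $X_u/u\to\gamma_0$ a.s.\ (Remark~\ref{rem:var}) it derives the \emph{mass} bound $G^{\lambda}_{t}\bigl((0,2\gamma_0 t]\bigr)\geq (1-e^{-\lambda t})/(2\lambda)$, hence a point of $(0,2\gamma_0 t]$ where the density is bounded below by a constant independent of $t$, and then lower semicontinuity of $G^{\lambda}_{t}$ (Lemma~\ref{lem:Glt}) upgrades this to a pointwise bound on some interval $(a_t,b_t)$; your closing monotonicity-and-infimum argument then finishes, since it only needs \emph{some} interval, not one abutting $0$. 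Your route can be repaired in the same spirit without any one-sided limit: for fixed $\lambda$ the occupation estimate $\delta^{-1}\int_0^{\delta}G^{\lambda}(y)\,dy\geq \tfrac{1}{2\gamma_0}e^{-\lambda\delta/(2\gamma_0)}\,\PP^0\bigl(|X_u-\gamma_0u|<\gamma_0 u/2,\ u\leq \delta/(2\gamma_0)\bigr)\to \tfrac{1}{2\gamma_0}$ as $\delta\to0^{+}$, together with continuity of $G^{\lambda}$ on $\R\setminus\{0\}$, produces an interval $(a_{\lambda},b_{\lambda})$ on which $G^{\lambda}\geq 1/(4\gamma_0)$ --- a bound uniform in $\lambda$ --- after which your $\lambda\to\infty$ argument goes through. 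As written, however, the key input is both unproven and misstated, so the proposal does not stand on its own.
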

\begin{proof}
Without loss of generality we may and do assume that $\gamma_0>0$. Due to Proposition \ref{calInKato} and Lemma \ref{lem:Bret} (boundedness of the function $G^{\lambda}$) it remains to prove
$
\KK(X)\subseteq \{q\colon\lim_{r\to 0^+}\sup_{x\in\R}\int_{B(x,r)}|q(z)|dz=0\}
$.
By Remark~\ref{rem:var} we get $\PP^0(\lim_{u\to 0^+} u^{-1}X_u=\gamma_0)=1$.
Hence, there is $\varepsilon>0$ such that $\PP^0(|X_u-\gamma_0 u|<\gamma_0 u)\geq 1/2$ for $u\leq \varepsilon$.
This implies that for $t\leq \varepsilon$,
\begin{align*}
G^{\lambda}_{t}(0,2\gamma_0t]=\int^t_0 e^{-\lambda u}\PP^0(X_u\in(0,2\gamma_0t])du\geq\int^t_0 e^{-\lambda u}\PP^0(|X_u-\gamma_0 u|<\gamma_0 u)du\geq \frac{1-e^{-\lambda t}}{2\lambda}.
\end{align*}
Hence, $\sup_{z\in(0,2\gamma_0t]}G^\lambda_t(z)\geq \frac{1-e^{-\lambda t}}{\lambda t} \frac{1}{4\gamma_0} \geq \frac{1-e^{-\lambda \varepsilon}}{\lambda \varepsilon}\frac{1}{4\gamma_0}$. Since $G^\lambda_t(z)$ is lower semi-continuous on  $\R\setminus\{0\}$ there exist $0<a_t<b_t\leq \varepsilon$
 such that $G^\lambda_t(z)\geq\frac{1-e^{-\lambda \varepsilon}}{\lambda \varepsilon}\frac{1}{8\gamma_0}$ for $z\in(a_t,b_t)$. Now, let $q\in \KK(X)$.
We obtain for $t\leq \varepsilon$,
$$
\int_{\R} |q(x+z)|G_t^{\lambda}(dz)\geq
\frac{1-e^{-\lambda \varepsilon}}{8\lambda \varepsilon \gamma_0}\int^{b_t}_{a_t}|q(x+z)|dz.$$
Thus,
$$0=\lim_{t\to 0^+}\sup_{x\in\R}\int^{b_t}_{a_t}|q(x+z)|dz\geq  \lim_{r\to 0^+}\sup_{x\in\R}\int_{B(x,r)}|q(z)|dz\,.$$
\end{proof}

\begin{lemma}\label{lem:reg0}
Let $0$ be regular for $\{0\}$. There is $0<M_{G^{\lambda}}<\infty$ such that
\begin{equation}\label{WeakHarnack}
G^\lambda(x)\leq M_{G^{\lambda}}\, G^\lambda (y),\quad x,\,y\in\R,\,\,\, |x-y|\leq 1.
\end{equation}
Further, $G_t^\lambda(x)$ given by \eqref{GtLambda} is continuous on $\R$ and
\begin{align*}
G^\lambda_t(x)\leq G^\lambda(x)(\lambda t+||P_t f-f||_\infty)\,,\qquad f(x)=h^{\lambda}(-x)\in C_0(\R)\,.
\end{align*}
\end{lemma}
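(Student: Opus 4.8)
The plan is to reduce everything to one elementary property of $h^{\lambda}$, namely its supermultiplicativity, and then read off all three assertions. First I note that since $0$ is regular for $\{0\}$ we are in case (C) of the classification, so condition (A) fails and $\int_{\R}\mathrm{Re}\big(1/(\lambda+\psi(z))\big)\,dz<\infty$. Hence Lemma~\ref{lem:Bret} applies and, because $0$ is regular, $G^{\lambda}(z)=k^{\lambda}h^{\lambda}(z)$ is bounded and continuous on all of $\R$, with $h^{\lambda}(0)=1$ and $0<h^{\lambda}(z)\leq 1$. These facts will be used throughout.

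The first step is to establish $h^{\lambda}(a+b)\geq h^{\lambda}(a)\,h^{\lambda}(b)$ for all $a,b\in\R$. This follows from the pathwise inequality $T_{\{a+b\}}\leq T_{\{a\}}+T_{\{a+b\}}\circ\theta_{T_{\{a\}}}$ on $\{T_{\{a\}}<\infty\}$: applying $e^{-\lambda\,\cdot}$, taking $\EE^0$, and invoking the strong Markov property together with translation invariance (so that $\EE^a e^{-\lambda T_{\{a+b\}}}=h^{\lambda}(b)$) gives the claim. From this the weak Harnack inequality \eqref{WeakHarnack} is immediate: writing $h^{\lambda}(y)=h^{\lambda}\big(x+(y-x)\big)\geq h^{\lambda}(x)\,h^{\lambda}(y-x)$ yields $h^{\lambda}(x)\leq h^{\lambda}(y)/h^{\lambda}(y-x)$, and for $|x-y|\leq 1$ the continuous, strictly positive $h^{\lambda}$ attains a minimum $m=\min_{|z|\leq 1}h^{\lambda}(z)>0$ on $[-1,1]$, so $G^{\lambda}(x)\leq m^{-1}G^{\lambda}(y)$ with $M_{G^{\lambda}}=m^{-1}$.

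For the continuity of $G_t^{\lambda}$ I would use that here $F^{\lambda}=G^{\lambda}$ is bounded and continuous on $\R$, so in \eqref{GtLambda} the first term is continuous and the integral term $\int_{\R}G^{\lambda}(x-z)\,P_t(dz)$ is continuous in $x$ by dominated convergence (bounded continuous integrand, $P_t$ a probability measure). For the last assertion, $f(w)=h^{\lambda}(-w)$ is continuous by Lemma~\ref{lem:Bret}, and $f\in C_0(\R)$ because $h^{\lambda}(x)\leq \PP^0(T_{\{x\}}\leq t)+e^{-\lambda t}$ while $\PP^0(T_{\{x\}}\leq t)\to 0$ as $|x|\to\infty$ (the paths are bounded on $[0,t]$), so choosing $t$ large and then $|x|$ large makes $h^{\lambda}(x)$ arbitrarily small. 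Using $G^{\lambda}(x-z)=k^{\lambda}f(z-x)$ and $\int_{\R}f(z-x)\,P_t(dz)=P_tf(-x)$, formula \eqref{GtLambda} rewrites as $G_t^{\lambda}(x)=k^{\lambda}\big(f(-x)-e^{-\lambda t}P_tf(-x)\big)$.

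The hard part will be the final inequality, since a naive triangle estimate of $f(-x)-e^{-\lambda t}P_tf(-x)$ does not produce the required factor $G^{\lambda}(x)=k^{\lambda}f(-x)$. The decisive observation is a \emph{multiplicative} lower bound on $P_tf(-x)$: supermultiplicativity gives $h^{\lambda}(x-z)\geq h^{\lambda}(x)\,h^{\lambda}(-z)$, whence $P_tf(-x)=\int_{\R}h^{\lambda}(x-z)\,P_t(dz)\geq f(-x)\,P_tf(0)$. This yields $G_t^{\lambda}(x)\leq G^{\lambda}(x)\big(1-e^{-\lambda t}P_tf(0)\big)$. Finally, $f(0)=h^{\lambda}(0)=1$ (regularity of $0$ for $\{0\}$) gives $P_tf(0)\geq 1-\|P_tf-f\|_{\infty}$, and combining this with $1-e^{-\lambda t}\leq\lambda t$ gives $1-e^{-\lambda t}P_tf(0)\leq \lambda t+\|P_tf-f\|_{\infty}$, which is the asserted bound.
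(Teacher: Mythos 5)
Your proof is correct and follows essentially the same route as the paper: both arguments rest on the supermultiplicativity $h^{\lambda}(a+b)\geq h^{\lambda}(a)h^{\lambda}(b)$, used once (with continuity and positivity of $h^{\lambda}$ from Lemma~\ref{lem:Bret} and Lemma~\ref{lem:tau}) to get the weak Harnack inequality, and again inside \eqref{GtLambda} to produce the multiplicative factor $G^{\lambda}(x)$ in the final bound. The only difference is cosmetic: you derive supermultiplicativity yourself via the strong Markov property, where the paper cites the remarks after Lemma~2 of Bretagnolle, and your algebra $1-e^{-\lambda t}P_tf(0)\leq \lambda t+\|P_tf-f\|_{\infty}$ is a trivial rearrangement of the paper's last display.
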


\begin{proof}
Let $F^{\lambda}$ be defined as in the proof of Lemma \ref{lem:reg0}. By Lemma \ref{lem:Bret} the functions $G^{\lambda}$ and $F^{\lambda}$ are equal and continuous on~$\R$.
Further, Lemma \ref{lem:tau} implies that the function $h^\lambda(x)= G^{\lambda}(x)/ k^{\lambda}=\EE^0 e^{-\lambda T_{\{x\}}}$ is in $C_0(\R)$. Since $h^{\lambda}(x+y)\geq h^{\lambda}(x)h^{\lambda}(y)$, $x,y\in\R$ (see remarks after \cite[Lemma 2]{MR0368175}), we get $$\frac{G^\lambda(x-z)}{G^\lambda(x)}=\frac{h^\lambda(x-z)}{h^\lambda(x)}\geq h^\lambda(-z)\,.$$
By positivity and continuity of $h^\lambda$ we obtain \eqref{WeakHarnack} with $M_{G^{\lambda}}=\sup_{|z|\le 1} 1/[h^\lambda(z)]<\infty$.
Eventually, by  \eqref{GtLambda},
\begin{align*}
G^\lambda_t(x)&=G^\lambda(x)\(1-e^{-\lambda t}+e^{-\lambda t}\int_{\R}\(1-\frac{G^\lambda(x-z)}{G^\lambda(x)}\)P_t(dz)\)\\
&\leq G^\lambda(x)\(\lambda t+\int_{\R}\(h^\lambda(0)-h^\lambda(-z)\)P_t(dz)\).
\end{align*}
\end{proof}

\begin{theorem}\label{thm:0regH0}
For $d=1$ under {\rm (C)} we have $\pK(X) \subsetneq \KK(X)$,
$$\pK(X)=\left\{q\colon \lim_{r\to 0^+} \sup_{x\in\R} \int_{B(x,r)} |q(z)|\,dz =0 \right\},$$
and
$$\KK(X)=(L_{loc}^1)_{uni}(\R)=\left\{q\colon \sup_{x\in\R}\int_{B(x,1)}|q(z)|dz<\infty\right\}.$$
\end{theorem}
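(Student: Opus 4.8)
The plan is to prove the two displayed identities and then read off the strict inclusion, working throughout with the potential density supplied by case~(C). Since $0$ is regular for $\{0\}$ we have $\int_\R \mathrm{Re}(1/(\lambda+\psi(z)))\,dz<\infty$, so Lemma~\ref{lem:Bret} applies: $G^\lambda(dz)$ has a density $G^\lambda(z)=k^\lambda h^\lambda(z)$ that is continuous on all of $\R$, with $h^\lambda(0)=1$ and $0<h^\lambda(z)\leq 1$.

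To identify $\pK(X)$, I would use that $h^\lambda$ is continuous and strictly positive, hence bounded above by $1$ and below by some $c>0$ on $[-1,1]$; thus $c_1\leq G^\lambda(w)\leq c_2$ for $|w|\leq 1$ with $0<c_1\leq c_2<\infty$. Writing the defining integral in convolution form, for every $r\leq 1$ one gets
\begin{align*}
c_1\int_{B(x,r)}|q(z)|\,dz\leq \int_{B(x,r)}|q(z)|\,G^\lambda(x,dz)\leq c_2\int_{B(x,r)}|q(z)|\,dz,
\end{align*}
and taking $\sup_x$ followed by $r\to 0^+$ gives the asserted description of $\pK(X)$.

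For $\KK(X)$, the inclusion $\KK(X)\subseteq(L^1_{loc})_{uni}(\R)$ is part~\ref{incl:1} of Proposition~\ref{calInKato}, so I only need $(L^1_{loc})_{uni}(\R)\subseteq\KK(X)$. Fix $q$ with $M'=\sup_y\int_{B(y,1)}|q(z)|\,dz<\infty$. By Proposition~\ref{gen_KK_equiv} it suffices that $\sup_x G^\lambda_t|q|(x)\to0$ as $t\to0^+$, and Lemma~\ref{lem:reg0} gives the pointwise bound $G_t^\lambda(w)\leq G^\lambda(w)(\lambda t+\|P_tf-f\|_\infty)$ with $f=h^\lambda(-\,\cdot\,)\in C_0(\R)$, hence $G^\lambda_t|q|(x)\leq(\lambda t+\|P_tf-f\|_\infty)\,G^\lambda|q|(x)$. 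The crux is then to bound $\sup_x G^\lambda|q|(x)$: splitting $\R$ into the unit intervals $[k,k+1)$, estimating $\int_{[k,k+1)}|q(x+w)|\,dw\leq M'$, applying the Harnack inequality \eqref{WeakHarnack} in the form $\sup_{[k,k+1)}G^\lambda\leq M_{G^\lambda}\int_{[k,k+1)}G^\lambda(w)\,dw$, and summing against $\int_\R G^\lambda=1/\lambda$ yields $\sup_x G^\lambda|q|(x)\leq M'M_{G^\lambda}/\lambda<\infty$. Since $\|P_tf-f\|_\infty\to0$ by strong continuity of the semigroup on $C_0(\R)$, the desired limit follows and $q\in\KK(X)$.

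Finally, $\pK(X)\subseteq\KK(X)$ is already contained in Proposition~\ref{calInKato}, part~\ref{incl:1}. For strictness I would exhibit $q(x)=\sum_{k=1}^\infty 2^k\ind_{(k,k+2^{-k})}(x)$, whose bumps each carry mass $1$ and are spaced unit distance apart, so $\sup_x\int_{B(x,1)}|q|<\infty$ and $q\in\KK(X)$; yet for any $r>0$, taking $x=k$ with $2^{-k}\leq r$ gives $\int_{B(k,r)}|q|\geq1$, so $\sup_x\int_{B(x,r)}|q|\geq1$ and $q\notin\pK(X)$. I expect the uniform bound $\sup_x G^\lambda|q|(x)<\infty$ to be the main obstacle, as it is exactly where integrability of $G^\lambda$ must be married to the Harnack inequality to force convergence of the sum over unit intervals; the rest is comparison against the now-continuous, positive potential density.
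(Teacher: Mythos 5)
Your proposal is correct and follows essentially the same route as the paper: Lemma~\ref{lem:Bret} gives the two-sided comparison of $G^\lambda$ with Lebesgue measure on the unit ball (hence the description of $\pK(X)$), Lemma~\ref{lem:L_unif} gives necessity for $\KK(X)$, and sufficiency is obtained exactly as in the paper via Lemma~\ref{lem:reg0}, a decomposition of $\R$ into unit intervals, the Harnack-type inequality \eqref{WeakHarnack}, and $\int_\R G^\lambda = 1/\lambda$. The only cosmetic differences are that you apply \eqref{WeakHarnack} once (getting $M_{G^\lambda}$ instead of the paper's $(M_{G^\lambda})^2$) and that you make the strictness explicit with the example $q(x)=\sum_k 2^k\ind_{(k,k+2^{-k})}(x)$, which the paper records in its introduction rather than in the proof.
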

\begin{proof}
For $\pK(X)$ we just observe that $G^{\lambda}(z)$ is bounded and $G^{\lambda}(z)\geq \varepsilon>0$ if $|z|\leq 1$.
Now, we describe $\KK(X)$.
The condition $q\in (L_{loc}^1)_{uni}(\R)$ is necessary by Lemma~\ref{lem:L_unif}. We show that it is sufficient.
Let $\lambda>0$ and denote $c_t=\lambda t+||P_t f - f||_\infty$, where $f(x)=h^{\lambda}(-x)=\EE e^{-\lambda T_{\{-x\}}}$. By Lemma \ref{lem:reg0}
\begin{align}
\int_{\R} |q(x+z)|&G^\lambda_t(dz)\leq c_t \int_{\R}|q(x+z)|G^\lambda(z)dz
=c_t\sum^\infty_{k=-\infty}\int^{k+1/2}_{k-1/2}|q(x+z)|G^\lambda(z)dz\nonumber \\
&\leq c_t\, M_{G^{\lambda}}
\sum^\infty_{k=-\infty}G^\lambda(k)\int^{k+1/2}_{k-1/2}|q(x+z)|dz
\leq c_t\, M_{G^{\lambda}} \sup_{x\in\R}\int_{B(x,1)}|q(z)|dz
\sum^\infty_{k=-\infty}G^\lambda(k) \nonumber \\
&\leq  c_t\, (M_{G^{\lambda}})^2 \lambda^{-1}\sup_{x\in\R}\int_{B(x,1)}|q(z)|dz. \label{ineq:0reg}
\end{align}
Since $f\in C_0(\R)$ we get $c_t\to 0$ as $t\to 0^+$.
\end{proof}

\subsection{Without \Hyp}
In this subsection we assume that \Hyp{} does not hold. In view of Proposition~\ref{calInKato} we assume that
$d>1$ and $X$ is given by \eqref{eq:descr_X}. 
We use results of Section \ref{UnderH0} and analyze the cases (A'), (B') and (C').

\begin{theorem}
Under {\rm (A')} we have $\pK(X)=\KK(X)$.
\end{theorem}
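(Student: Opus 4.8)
The plan is to mirror the argument of Theorem~\ref{prop:KatoGeneral}, since case~(A') delivers exactly the ingredient that made that proof work, namely polarity of $\{0\}$. By Proposition~\ref{calInKato} the inclusion $\pK(X)\subseteq\KK(X)$ holds for every L\'evy process, so it suffices to establish $\KK(X)\subseteq\pK(X)$. First I would fix $\lambda>0$ and take $q\in\KK(X)=\KK(X^{\lambda})$ (Remark~\ref{lem:KK_sub}); this is precisely condition~\eqref{C2} for the subprocess $X^{\lambda}$, and the target $q\in\pK(X)$ is condition~\eqref{C1} for $X^{\lambda}$.

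Next I would feed $X^{\lambda}$ into Zhao's scheme of Figure~\ref{fig}, which routes \eqref{C2} to \eqref{C1} through \eqref{C3} provided \eqref{H1} and \eqref{H3} hold for $X^{\lambda}$. Since $X$ is non-Poisson here (it is given by \eqref{eq:descr_X} with $Z$ non-Poisson), Remark~\ref{H1H2} and Lemma~\ref{lem:h1h2_sub} yield \eqref{H1} for $X^{\lambda}$ at once. The remaining hypothesis \eqref{H3} for $X^{\lambda}$ is, by Corollary~\ref{lem:H3}, equivalent to $\{0\}$ being polar for $X$; and this is exactly the content of case~(A') as translated in Corollary~\ref{lem:naX}. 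Assembling the implications \eqref{C2}$\Rightarrow$\eqref{C3} (under \eqref{H1}) and \eqref{C3}$\Rightarrow$\eqref{C1} (under \eqref{H3}) then gives $q\in\pK(X)$, completing the proof.

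There is essentially no obstacle beyond bookkeeping: the whole point of the present subsection is that, although \Hyp{} fails, the decomposition \eqref{eq:descr_X} keeps $X$ non-Poisson, so \eqref{H1} (and \eqref{H2}) still pass to $X^{\lambda}$, while case~(A') supplies the polarity of $\{0\}$ needed for \eqref{H3}. Thus the only delicate step—verifying \eqref{H3} for $X^{\lambda}$—is handled not by Proposition~\ref{polar} (which required \Hyp{}) but directly by the hypothesis (A') through Corollary~\ref{lem:naX}, after which the reasoning of Theorem~\ref{prop:KatoGeneral} applies verbatim.
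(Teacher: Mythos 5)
Your proposal is correct and follows exactly the paper's route: the paper's own proof simply says to follow the argument of Theorem~\ref{prop:KatoGeneral}, noting that the only missing ingredient---polarity of $\{0\}$ for $X$---is supplied under (A') by Corollary~\ref{lem:naX}, which is precisely your reasoning (you have merely unpacked the cited argument in full). No gaps.
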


\begin{proof}
Following the proof of Theorem \ref{prop:KatoGeneral} it remains to show that $\{0\}$ is polar for the process~$X$.
This is assured by Corollary~\ref{lem:naX}.
\end{proof}

We proceed to the remaining cases. 
The transition kernel of $X$ equals
\begin{align*}
P_t(dx)=P^Z_t*\sum^\infty_{n=0}e^{-t\nu^Y(\Rd)}\frac{t^n(\nu^{Y})^{*n}}{n!}(dx)\,.
\end{align*}
The characteristic exponent $\psi$ of $X$ can be written as $\psi=\psi^Y + \psi^Z$.
We note that $\psi^Z(z)=\psi^Z(v)$ for $z=v+w\in\Rd$, $v\in V$, $w\in V^{\bot}$.
For $\lambda>0$, $t\in(0,\infty]$ and $n\in\NN$ we define
\begin{align*}
\GZln_t(dv)= \int_0^t u^n e^{-\lambda u} P_u^Z(dv)\,du\,.
\end{align*}
We investigate $n$-moment $\lambda$-potentials $\GZln(dv)=\GZln_{\infty}(dv)$ and truncated $\lambda$-potentials $\GZL{\lambda}_t(dv)=\GZLN{\lambda}{0}_t(dv)$ of $Z$.
We also write $\GZL{\lambda}(dv)=\GZLN{\lambda}{0}_{\infty}(dv)$
for $\lambda$-potentials of $Z$.
The measures $\GZL{\lambda},\, \GZL{\lambda}_t,\, \GZln$ are concentrated on $V$.
Observe that
\begin{align}\label{Glambda_form}
G^{\lambda}(dx)= \sum_{n=0}^{\infty} \frac{1}{n!}\, \GZLn{\lambda+\nu^Y(\Rd)}* (\nu^Y)^{*n}(dx)\,.
\end{align}
We reformulate Lemma \ref{lem:Glt} and Lemma \ref{lem:reg0} in view of Remark \ref{obrot}.

\begin{lemma}\label{lem:wlasnosci_Z}
Let $\int_{V}\mathrm{Re} \left(\frac{1}{\lambda + \psi^Z(v)} \right)dv<\infty$, $\lambda>0$. Then $\GZL{\lambda}_t (dv)$ has a bounded density $\GZL{\lambda}_t (v)$ with respect to the Lebesgue measure on $V$ which is lower semi-continuous on $V\setminus\{0\}$.
If $0$ is regular for $\{0\}$ for Z then there is $0<M_{\GZL{\lambda}}<\infty$ such that
\begin{equation*}
\GZL{\lambda} (v)\leq M_{\GZL{\lambda}}\, \GZL{\lambda} (v'),\quad v,\,v'\in V,\,\,\, |v-v'|\leq 1,
\end{equation*}
$\GZL{\lambda}_t (v)$ is continuous on $V$ and
\begin{align*}
\GZL{\lambda}_t (v)\leq \GZL{\lambda}(v)(\lambda t+||P_t^Z f-f||_\infty)\,,\qquad f(v)\in C_0(V)\,.
\end{align*}
\end{lemma}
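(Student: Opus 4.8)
The plan is to reduce everything to the one-dimensional statements already established in Lemma~\ref{lem:Glt} and Lemma~\ref{lem:reg0}. Since we work under the standing assumption that $X$ is of the form \eqref{eq:descr_X}, Proposition~\ref{Poi+nor} gives $\dim(V)=1$, and by property iv) the process $Z$ is supported on $V$; hence $Z$ is, up to position in $\Rd$, a one-dimensional L\'evy process. First I would invoke Remark~\ref{obrot}: choosing a rotation $O$ that maps $V$ isometrically onto a copy of $\R$, the process $\widetilde Z=OZ$ is a one-dimensional L\'evy process whose characteristic exponent is the restriction of $\psi^Z$ to $V$ transported by $O$ (recall $\psi^Z(z)=\psi^Z(v)$ for $z=v+w$ with $v\in V$, $w\in V^{\bot}$). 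Because $O$ acts as an isometry on $V$, the one-dimensional Lebesgue measure on $V$ is carried to Lebesgue measure on $\R$, so the hypothesis $\int_V \mathrm{Re}(1/[\lambda+\psi^Z(v)])\,dv<\infty$ becomes exactly the hypothesis $\int_\R \mathrm{Re}(1/[\lambda+\psi^{\widetilde Z}(z)])\,dz<\infty$ of Lemma~\ref{lem:Glt}.

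Next I would push the measures through this identification. The truncated potential $\GZL{\lambda}_t(dv)$ is concentrated on $V$, and under $O$ it corresponds to $G^{\widetilde Z,\lambda}_t(dz)$. Applying Lemma~\ref{lem:Glt} to $\widetilde Z$ yields a bounded density $G^{\widetilde Z,\lambda}_t(z)$, lower semi-continuous on $\R\setminus\{0\}$; transporting it back by $O^{-1}$ (again an isometry, hence preserving boundedness, lower semi-continuity, and the reference Lebesgue measure) produces the density $\GZL{\lambda}_t(v)$ with the stated properties on $V\setminus\{0\}$. This settles the first assertion.

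For the second assertion I would first observe that regularity of $0$ for $\{0\}$ is invariant under the one-to-one correspondence of Remark~\ref{obrot}: $Z_t=0$ if and only if $\widetilde Z_t=0$, so $0$ is regular for $\{0\}$ for $Z$ exactly when it is for $\widetilde Z$. I would then apply Lemma~\ref{lem:reg0} to $\widetilde Z$, obtaining the Harnack-type bound $G^{\widetilde Z,\lambda}(z)\leq M\,G^{\widetilde Z,\lambda}(z')$ for $|z-z'|\leq 1$, the continuity of $G^{\widetilde Z,\lambda}_t$ on $\R$, and the inequality $G^{\widetilde Z,\lambda}_t(z)\leq G^{\widetilde Z,\lambda}(z)(\lambda t+\|P_t^{\widetilde Z}\tilde f-\tilde f\|_\infty)$ with $\tilde f(z)=h^{\widetilde Z,\lambda}(-z)\in C_0(\R)$. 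Transporting back by $O^{-1}$, the constant $M$ and all norms are preserved because $O$ is an isometry, $f(v)\in C_0(V)$ corresponds to $\tilde f$, and the potential densities correspond; this gives precisely the claimed statements for $Z$ on $V$.

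The only delicate point is the careful bookkeeping of the identification under $O$, namely verifying that densities with respect to the one-dimensional Lebesgue measure on $V$ transfer correctly and that $\|P_t^Z f-f\|_\infty$ matches $\|P_t^{\widetilde Z}\tilde f-\tilde f\|_\infty$. Since $O$ is an isometry that carries the relevant Lebesgue measure and intertwines the semigroups through the correspondence $\widetilde Z=OZ$, these identifications are routine, and no genuinely new estimate beyond Lemmas~\ref{lem:Glt} and~\ref{lem:reg0} is required.
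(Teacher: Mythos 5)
Your proposal is correct and coincides with the paper's intended argument: the paper states this lemma without any separate proof, introducing it only with the remark that it ``reformulates'' Lemma~\ref{lem:Glt} and Lemma~\ref{lem:reg0} in view of Remark~\ref{obrot}, which is precisely your reduction. Your write-up merely makes explicit the rotation bookkeeping (transport of the exponent, of the potential densities, of regularity of $0$ for $\{0\}$, and of the sup norms under the isometry $O$) that the authors left implicit.
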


\begin{lemma}\label{lem:Gtilde}
Let $\int_{V}\mathrm{Re} \left(\frac{1}{\lambda + \psi^Z(v)} \right)dv<\infty$, $\lambda>0$.
Then $\GZln(dv)$ has a density $\GZln(v)$ with respect to the Lebesgue measure on $V$, and
\begin{equation}\label{Gtilde}
\GZln(v)\leq \frac{n!}{\lambda^{n}}\int_V\mathrm{Re}\left(\frac{1}{\lambda+\psi^Z(u)}\right)du.
\end{equation}
\end{lemma}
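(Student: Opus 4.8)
The plan is to carry out the whole computation in the Fourier domain. By Remark~\ref{obrot} I would first identify $V$ with $\R$ through a rotation, so that $\psi^Z$, the measure $\GZln$ and the integral in the hypothesis all live on a line and the one-dimensional Fourier inversion theorem is available. Since $\GZln(dv)$ is a finite measure of total mass $\int_0^\infty u^n e^{-\lambda u}\,du=n!/\lambda^{n+1}$, its characteristic function is well defined, and the first step is to establish
\begin{align*}
\int_V e^{i\langle v,\xi\rangle}\,\GZln(dv)=\frac{n!}{(\lambda+\psi^Z(\xi))^{n+1}}\,.
\end{align*}
This follows from Fubini (legitimate because $|e^{-u\psi^Z(\xi)}|\leq 1$ and $u^n e^{-\lambda u}$ is integrable), the identity $\int_V e^{i\langle v,\xi\rangle}P_u^Z(dv)=\EE^0 e^{i\langle \xi,Z_u\rangle}=e^{-u\psi^Z(\xi)}$, and the elementary formula $\int_0^\infty u^n e^{-us}\,du=n!/s^{n+1}$, valid for $\mathrm{Re}\,s>0$ with $s=\lambda+\psi^Z(\xi)$ since $\mathrm{Re}(\lambda+\psi^Z(\xi))\geq\lambda>0$.

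The heart of the matter is a pointwise bound on this characteristic function. Writing $z=\lambda+\psi^Z(\xi)$ and using $\mathrm{Re}(1/z)=\mathrm{Re}(z)/|z|^2$ together with $\mathrm{Re}(z)=\lambda+\mathrm{Re}\,\psi^Z(\xi)\geq\lambda$, one gets $|z|^{-2}\leq \lambda^{-1}\mathrm{Re}(1/z)$; combining this with $|z|\geq\mathrm{Re}(z)\geq\lambda$ yields, for $n\geq 1$,
\begin{align*}
\frac{1}{|\lambda+\psi^Z(\xi)|^{n+1}}=\frac{1}{|z|^{n-1}}\cdot\frac{1}{|z|^2}\leq\frac{1}{\lambda^{n-1}}\cdot\frac{1}{\lambda}\,\mathrm{Re}\!\left(\frac{1}{\lambda+\psi^Z(\xi)}\right)\,.
\end{align*}
Integrating over $V$ and invoking the hypothesis shows that the characteristic function lies in $L^1(V)$, so $\GZln(dv)$ is absolutely continuous with a bounded continuous density given by Fourier inversion,
\begin{align*}
\GZln(v)=\frac{1}{2\pi}\int_V e^{-i\langle v,\xi\rangle}\,\frac{n!}{(\lambda+\psi^Z(\xi))^{n+1}}\,d\xi\,.
\end{align*}
Since $\GZln(dv)$ is a positive measure its density is nonnegative, so bounding the modulus of the integrand by the displayed inequality and discarding the harmless factor $1/(2\pi)\leq 1$ gives precisely
\begin{align*}
\GZln(v)\leq\frac{n!}{\lambda^n}\int_V \mathrm{Re}\!\left(\frac{1}{\lambda+\psi^Z(u)}\right)du\,.
\end{align*}

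The Fubini exchange and the Fourier inversion are routine; the one genuinely load-bearing step, and the thing to get right, is the pointwise inequality $|\lambda+\psi^Z|^{-(n+1)}\leq\lambda^{-n}\,\mathrm{Re}((\lambda+\psi^Z)^{-1})$. It rests on $\mathrm{Re}\,\psi^Z\geq 0$ and, crucially, on having at least one spare power of $|z|$ in the denominator to convert into the factor $\lambda^{-1}$ in front of $\mathrm{Re}(1/z)$; this is exactly why the argument requires $n\geq 1$. The borderline case $n=0$ is not reached by this estimate and is instead supplied by Lemma~\ref{lem:Bret}, which already furnishes a bounded density for the ordinary $\lambda$-potential $\GZL{\lambda}$.
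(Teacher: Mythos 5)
Your proof is correct and follows essentially the same route as the paper's: compute the Fourier transform $n!/(\lambda+\psi^Z(\xi))^{n+1}$ of $\GZln$, bound it pointwise by $n!\,\lambda^{-n}\,\mathrm{Re}\big(1/(\lambda+\psi^Z(\xi))\big)$ using $\mathrm{Re}(1/z)=\mathrm{Re}(z)/|z|^2$, $\mathrm{Re}\,\psi^Z\geq 0$ and $|\lambda+\psi^Z(\xi)|\geq\lambda$, and then conclude integrability and the density bound by Fourier inversion. Your closing caveat about $n\geq 1$ is accurate and in fact slightly more careful than the paper itself, whose displayed chain of inequalities also fails at $n=0$ (there $1/|z|\leq\mathrm{Re}(1/z)$ would force $\psi^Z(\xi)\in[0,\infty)$, and $1/(\lambda+\psi^Z)$ need not even be integrable), so the $n=0$ instance of \eqref{Gtilde}, implicitly invoked later in the proof of Lemma~\ref{lem:Glambda}, must indeed be supplied by Lemma~\ref{lem:Bret} rather than by this Fourier estimate.
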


\begin{proof}
By Remark \ref{obrot} we assume that $V=\R$ and we observe that the Fourier transform of $\GZln$ equals
$$
\int^\infty_0t^ne^{-\lambda t} e^{-t\psi^Z(\xi)}dt=\frac{n!}{[\lambda+\psi^Z(\xi)]^{n+1}}\,,\qquad \xi\in \R\,.
$$
Since $\mathrm{Re}(1/z)=\mathrm{Re}(\bar{z})/|z|^2$ and $\mathrm{Re}[\psi]\geq0$ we obtain
$$
\frac{1}{|\lambda+\psi^Z(\xi)|^{n+1}}\leq \lambda^{-n+1}\frac{1}{|\lambda+\psi^Z(\xi)|^{2}}\leq \lambda^{-n} \mathrm{Re}\left(\frac{1}{\lambda+\psi^Z(\xi)}\right).
$$
This implies that the Fourier transform is integrable and \eqref{Gtilde} follows by the inversion formula.
\end{proof}

\begin{lemma}\label{lem:Glambda}
Let $\int_{V}\mathrm{Re} \left(\frac{1}{\lambda + \psi^Z(v)} \right)dv<\infty$, $\lambda>0$.
Then
\begin{align*}
\sup_{x\in\Rd} \left(\int_{B(0,r)}|q(x+z)| G^{\lambda}(dz)\right) \leq   \sup_{x\in\Rd} \left( \int_{B(0,r)\cap V} |q(x+v)| \,dv\right) C \big[1+ \nu^Y(\Rd)/\lambda \big]\,,
\end{align*}
where $dv$ is the  one-dimensional Lebesgue measure on $V$ and $C=\int_{V}\mathrm{Re}\left(1/[\lambda +\nu^Y(\Rd)+\psi^Z(u)] \right)du$.
\end{lemma}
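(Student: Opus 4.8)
The plan is to feed the series representation \eqref{Glambda_form} into the left-hand side and estimate each summand separately. Writing $\mu=\lambda+\nu^Y(\Rd)$, all terms being nonnegative, Tonelli gives
$$\int_{B(0,r)}|q(x+z)|\,G^{\lambda}(dz)=\sum_{n=0}^{\infty}\frac{1}{n!}\int_{B(0,r)}|q(x+z)|\,\big[\GZLn{\mu}*(\nu^Y)^{*n}\big](dz)\,.$$
The finiteness of $C$, that is the hypothesis of Lemma~\ref{lem:Gtilde} with parameter $\mu$ in place of $\lambda$, is inherited from the assumed finiteness of $\int_V\mathrm{Re}(1/[\lambda+\psi^Z(v)])\,dv$, since finiteness of this resolvent integral characterizes non-polarity of $\{0\}$ and hence does not depend on the spectral parameter. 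Lemma~\ref{lem:Gtilde} then supplies the uniform density bound $\GZLn{\mu}(v)\le (n!/\mu^{n})\,C$ on $V$.

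Next I would estimate a single summand uniformly in $x$. Since $\GZLn{\mu}$ is concentrated on $V$, Tonelli lets me write the convolution integral as $\int_{\Rd}(\nu^Y)^{*n}(dw)\int_{\{v\in V:\,w+v\in B(0,r)\}}|q(x+w+v)|\,\GZLn{\mu}(dv)$. The geometric heart of the argument is the containment of the inner domain: decomposing $w=w_V+w'$ with $w_V\in V$ and $w'\in V^{\bot}$, the condition $|w+v|^{2}=|w_V+v|^{2}+|w'|^{2}<r^{2}$ forces $w_V+v\in B(0,r)\cap V$. Substituting $v'=w_V+v$, so that $x+w+v=x+w'+v'$, and using the density bound, the inner integral is at most $(n!/\mu^{n})\,C\int_{B(0,r)\cap V}|q(x+w'+v')|\,dv'$; enlarging the slice to the full $V$-ball is legitimate because the integrand is nonnegative. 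With $S=\sup_{y\in\Rd}\int_{B(0,r)\cap V}|q(y+v)|\,dv$, this inner integral is thus bounded by $(n!/\mu^{n})\,C\,S$, uniformly in $w$ and in $x$.

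Integrating this uniform bound against $(\nu^Y)^{*n}$, whose total mass is $\nu^Y(\Rd)^{n}$, yields $\int_{B(0,r)}|q(x+z)|\,[\GZLn{\mu}*(\nu^Y)^{*n}](dz)\le (n!/\mu^{n})\,C\,S\,\nu^Y(\Rd)^{n}$. Dividing by $n!$ and summing the resulting geometric series gives
$$\sum_{n=0}^{\infty}\Big(\frac{\nu^Y(\Rd)}{\mu}\Big)^{n}=\frac{\mu}{\lambda}=1+\frac{\nu^Y(\Rd)}{\lambda}\,,$$
so that taking the supremum over $x$ produces exactly the asserted inequality.

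I expect the main obstacle to be the geometric containment step together with the bookkeeping of the two translations $w_V$ and $w'$: one must check that both enlarging the slice domain to the full $V$-ball and shifting the argument of $q$ by $w'\in V^{\bot}$ land the integral inside the single translation-invariant quantity $S$, so that the per-term estimate is genuinely uniform in the convolution variable $w$. A secondary technical point is justifying $C<\infty$ under the stated hypothesis, which rests on the parameter-independence of the finiteness of the resolvent integral.
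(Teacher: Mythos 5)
Your proof is correct and follows essentially the same route as the paper's: expand $G^{\lambda}$ via \eqref{Glambda_form}, bound the density of each $\GZLn{\lambda+\nu^Y(\Rd)}$ by the constant from Lemma~\ref{lem:Gtilde}, reduce the slice integrals to $\sup_{x}\int_{B(0,r)\cap V}|q(x+v)|\,dv$ through the orthogonal decomposition $w=w_V+w'$ and translation invariance on $V$, and sum the geometric series to produce the factor $1+\nu^Y(\Rd)/\lambda$. The only point you treat more explicitly than the paper is the finiteness of $C$, needed to invoke Lemma~\ref{lem:Gtilde} at the shifted parameter $\lambda+\nu^Y(\Rd)$; your polarity argument is valid in this setting ($Z$ is a non-Poisson one-dimensional process on $V$), and it also follows more elementarily from $\mathrm{Re}\left(1/[\mu+\psi^Z]\right)\leq (\mu/\lambda)\,\mathrm{Re}\left(1/[\lambda+\psi^Z]\right)$ for $\mu\geq\lambda$, since $\mathrm{Re}\,\psi^Z\geq 0$.
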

\begin{proof}
By \eqref{Glambda_form} and \eqref{Gtilde} we have
\begin{align*}
\int_{B(0,r)}|q(x+z)| G^{\lambda}(dz)=& \sum_{n=0}^{\infty}\frac{1}{n!} \int_{\Rd} \left( \int_V \ind_{B(0,r)}(v+w) |q(x+v+w)| \GZLn{\lambda+\nu^Y(\Rd)}(dv)\right)(\nu^{Y})^{*n}(dw)\\
\leq & \sup_{x,w\in\Rd} \left(\int_V \ind_{B(0,r)}(v+w) |q(x+v+w)| \,dv\right) \sum_{n=0}^{\infty} C \left( \frac{\nu^Y(\Rd)}{\lambda+\nu^Y(\Rd)}\right)^n\,,
\end{align*}
and
\begin{align*}
\sup_{x,w\in\Rd} \left(\int_V \ind_{B(0,r)}(v+w) |q(x+v+w)| \,dv\right)
= \sup_{x,w\in\Rd} \left(\int_{B(-w,r)\cap V} |q(x+v)| \,dv\right)&\\
= \sup_{x\in\Rd,\,w\in V}\left( \int_{B(-w,r)\cap V} |q(x+v)| \,dv\right)
= \sup_{x\in\Rd} \left( \int_{B(0,r)\cap V} |q(x+v)| \,dv\right)&,
\end{align*}
where the last equality follows by the translation invariance of the Lebesgue measure on $V$. This ends the proof.
\end{proof}

\begin{theorem}\label{prop:WH0_bouVar1}
Under {\rm (B')} we have
$$
\pK(X)=\KK(X)=\left\{q\colon \lim_{r\to 0^+}\sup_{x\in\Rd}\int_{B(0,r)\cap V}|q(x+v)|dv=0\right\},
$$
where $dv$ is the  one-dimensional Lebesgue measure on $V$.
\end{theorem}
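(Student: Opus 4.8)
Write $\mathcal{N}$ for the class appearing on the right of the asserted identity. The plan is to establish the chain $\mathcal{N}\subseteq\pK(X)\subseteq\KK(X)\subseteq\mathcal{N}$, the middle inclusion being part of Proposition~\ref{calInKato}. Under (B') the set $\{0\}$ is not polar for $Z$, so $Z$ falls outside case (A'), whence $\int_V\mathrm{Re}(1/[\lambda+\psi^Z(v)])\,dv<\infty$ for every $\lambda>0$; this makes both Lemma~\ref{lem:wlasnosci_Z} and Lemma~\ref{lem:Glambda} available, and I would fix $\lambda>0$ throughout. The inclusion $\mathcal{N}\subseteq\pK(X)$ is then immediate: by translation invariance of the process $\sup_x\int_{B(x,r)}|q(z)|\,G^{\lambda}(x,dz)=\sup_x\int_{B(0,r)}|q(x+z)|\,G^{\lambda}(dz)$, and feeding $q\in\mathcal{N}$ into the bound of Lemma~\ref{lem:Glambda} shows this tends to $0$ as $r\to0^+$, which is precisely \eqref{def:pK}.

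The substance is the reverse inclusion $\KK(X)\subseteq\mathcal{N}$, which transplants the one-dimensional computation of Theorem~\ref{bouVar1} from $Z$ to $X=Y+Z$. The key new step is a lower bound that removes the $Y$-component. Since $Y$ is a compound Poisson process independent of $Z$, on the event that $Y$ has no jump in $[0,t]$ --- of probability $e^{-t\nu^Y(\Rd)}>0$ --- one has $Y_u=0$, hence $X_u=Z_u$, for all $u\in[0,t]$. Restricting the expectation to this event and using independence yields
\begin{equation*}
G^{\lambda}_t|q|(x)=\EE^0\int_0^t e^{-\lambda u}|q(X_u+x)|\,du\geq e^{-t\nu^Y(\Rd)}\int_V|q(x+v)|\,\GZL{\lambda}_t(dv).
\end{equation*}

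Rotating by Remark~\ref{obrot} so that $V\cong\R$ with drift $\gamma_0^Z\neq0$, and invoking Remark~\ref{rem:var} for $Z$, I would reproduce the estimate of Theorem~\ref{bouVar1}: the almost sure convergence $u^{-1}Z_u\to\gamma_0^Z$ furnishes $\varepsilon>0$ with $\GZL{\lambda}_t(0,2\gamma_0^Z t]\geq(1-e^{-\lambda t})/(2\lambda)$ for $t\leq\varepsilon$, so that $\sup_v\GZL{\lambda}_t(v)$ is bounded below by a constant uniform in $t\leq\varepsilon$, and the lower semi-continuity of $\GZL{\lambda}_t$ (Lemma~\ref{lem:wlasnosci_Z}) then produces an interval $(a_t,b_t)\subseteq V$ on which $\GZL{\lambda}_t\geq c$ for a fixed $c>0$. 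Writing $\omega(\ell)=\sup_{x\in\Rd}\int_{\{v\in V\colon 0<v<\ell\}}|q(x+v)|\,dv$, which is non-decreasing and satisfies $\sup_x\int_{B(0,r)\cap V}|q(x+v)|\,dv=\omega(2r)$ by translation invariance, the two lower bounds combine into $\sup_x G^{\lambda}_t|q|(x)\geq c\,e^{-t\nu^Y(\Rd)}\,\omega(b_t-a_t)$.

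Finally, $q\in\KK(X)$ forces $\sup_x G^{\lambda}_t|q|(x)\to0$ as $t\to0^+$ by Proposition~\ref{gen_KK_equiv}, so $\omega(b_t-a_t)\to0$; since $\omega$ is non-decreasing and $b_t-a_t>0$, this forces $\lim_{r\to0^+}\omega(r)=0$, i.e.\ $q\in\mathcal{N}$ (note that the intervals need not shrink, only that $\omega$ of a positive length vanishes in the limit). I expect the main obstacle to be making the stripping-off lower bound fully rigorous: the interchange relies on the independence of $Y$ and $Z$, on $Y$ being constant between its jumps, and on recognizing the resulting measure as the truncated $\lambda$-potential $\GZL{\lambda}_t$ of $Z$. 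Once this reduction is secured, the remainder is a faithful transcription of the argument of Theorem~\ref{bouVar1}.
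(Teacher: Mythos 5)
Your proposal is correct and follows essentially the same route as the paper: $\mathcal{N}\subseteq\pK(X)$ via Lemma~\ref{lem:Glambda}, $\pK(X)\subseteq\KK(X)$ via Proposition~\ref{calInKato}, and $\KK(X)\subseteq\mathcal{N}$ by stripping off the compound Poisson part and rerunning the argument of Theorem~\ref{bouVar1} for $Z$ with Lemma~\ref{lem:wlasnosci_Z} and Remark~\ref{rem:var}. Your ``no jump of $Y$ on $[0,t]$'' event is just the probabilistic reading of the $n=0$ term in the convolution series the paper uses (it yields the slightly weaker but equally sufficient factor $e^{-t\nu^Y(\Rd)}$ in place of $e^{-u\nu^Y(\Rd)}$), and your explicit treatment of the non-shrinking intervals $(a_t,b_t)$ via monotonicity of $\omega$ correctly fills in the step the paper leaves as ``adapting the proof of Theorem~\ref{bouVar1}.''
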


\begin{proof}
Lemma \ref{lem:Glambda} gives
$
\{q\colon \lim_{r\to 0^+}\sup_{x\in\Rd}\int_{B(0,r)\cap V}|q(x+v)|dv=0\} \subseteq \pK(X)
$.
By Proposition~\ref{calInKato} it suffices to show
$
\KK(X)\subseteq\{q\colon \lim_{r\to 0^+}\sup_{x\in\Rd}\int_{B(0,r)\cap V}|q(x+v)|dv=0\}
$.
Since for $t>0$ and $x\in\Rd$ we have
\begin{align*}
\int^t_0 P_u|q|(x)\,du\geq \int_0^t \int_{\Rd} |q(x+z)|\, e^{-u\nu^Y(\Rd)} P_u^Z(dz)\,du= \int_{\Rd\cap V} |q(x+v)|\, \GZL{\,\nu^Y(\Rd)}_t(dv),
\end{align*}
the inclusion
follows by adapting the proof of Theorem~\ref{bouVar1} to the one-dimensional process $Z$ with the support of Lemma~\ref{lem:wlasnosci_Z} and Remark~\ref{rem:var}.
\end{proof}

\begin{theorem}\label{thm:WH0_0reg}
Under {\rm (C')} we have $\pK(X)\subsetneq\KK(X)$,
$$\pK(X)=\left\{q\colon \lim_{r\to 0^+}\sup_{x\in\Rd}\int_{B(0,r)\cap V}|q(x+v)|\,dv=0\right\},$$
and
$$\KK(X)=\left\{q\colon\sup_{x\in\Rd}\int_{B(0,1)\cap V}|q(x+v)|\, dv<\infty\right\},$$
where $dv$ is the one-dimensional Lebesgue measure on $V$.
\end{theorem}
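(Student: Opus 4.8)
The plan is to treat $\pK(X)$ and $\KK(X)$ separately and, in both, to reduce the $\R^d$-estimates to one-dimensional potential estimates for $Z$ on $V$ by means of the series \eqref{Glambda_form} and its truncated counterpart, in close parallel with Theorem~\ref{thm:0regH0}. Write $\beta=\lambda+\nu^Y(\R^d)$. Under {\rm (C')} the point $0$ is regular for $\{0\}$ for $Z$ (Corollary~\ref{lem:naX}), hence $\int_V\mathrm{Re}(1/[\beta+\psi^Z(v)])\,dv<\infty$ and Lemmas~\ref{lem:Bret} (applied to $Z$ through Remark~\ref{obrot}), \ref{lem:wlasnosci_Z} and~\ref{lem:Gtilde} are all available; in particular $\GZL{\beta}(v)=k^{Z,\beta}h^{Z,\beta}(v)$ is continuous and strictly positive with $1=h^{Z,\beta}(0)\ge h^{Z,\beta}(v)$, while the truncated $\GZL{\beta}_t$ is continuous and obeys the Harnack bound together with $\GZL{\beta}_t(v)\le \GZL{\beta}(v)\,c_t$, where $c_t=\beta t+\|P^Z_tf-f\|_\infty\to0$.

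For $\pK(X)$ the inclusion $\supseteq$ is precisely Lemma~\ref{lem:Glambda}. For the reverse I keep only the $n=0$ term of \eqref{Glambda_form}, namely $G^\lambda(dz)\ge\GZL{\beta}(dv)$ as measures on $V$; since $\GZL{\beta}(v)\ge\varepsilon>0$ on $B(0,1)\cap V$, for $r\le1$ one gets $\int_{B(0,r)}|q(x+z)|\,G^\lambda(dz)\ge\varepsilon\int_{B(0,r)\cap V}|q(x+v)|\,dv$, so $q\in\pK(X)$ forces the $V$-slice integral to vanish. For the sufficiency half of the $\KK(X)$ description I expand $G^\lambda_t(dz)=\sum_{n\ge0}\frac{1}{n!}\GZLN{\beta}{n}_t*(\nu^Y)^{*n}(dz)$. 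From $\GZLN{\beta}{n}_t(dv)\le t^n\GZL{\beta}_t(dv)\le t^nc_t\,\GZL{\beta}(dv)$, splitting $\int_V$ over unit intervals and invoking the Harnack inequality with $\sum_k\GZL{\beta}(k)\le M_{\GZL{\beta}}\beta^{-1}$ exactly as in \eqref{ineq:0reg}, the inner $V$-integral is $\le (M_{\GZL{\beta}})^2\beta^{-1}\sup_x\int_{B(0,1)\cap V}|q(x+v)|dv$. Summing the series contributes the factor $\sum_n\frac{t^n}{n!}(\nu^Y)^{*n}(\R^d)=e^{t\nu^Y(\R^d)}$, so that $\sup_xG^\lambda_t|q|(x)\le c_te^{t\nu^Y(\R^d)}(M_{\GZL{\beta}})^2\beta^{-1}\sup_x\int_{B(0,1)\cap V}|q(x+v)|dv\to0$, giving $q\in\KK(X)$ by Proposition~\ref{gen_KK_equiv}.

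The main obstacle is the necessity of the $V$-slice bound for $\KK(X)$: the one-dimensional route through Lemma~\ref{lem:L_unif} is unavailable here, since that lemma only controls the full $d$-dimensional average $\sup_x\int_{B(x,1)}|q|$, which says nothing about integrals along the one-dimensional slices $x+V$. Instead I fix $t_0,\lambda>0$ with $\sup_xG^\lambda_{t_0}|q|(x)=:M<\infty$ (Proposition~\ref{gen_KK_equiv}; taking $\lambda>0$ keeps $\beta>0$ even when $Y=0$), and retain the $n=0$ term to obtain $G^\lambda_{t_0}|q|(x)\ge\int_V|q(x+v)|\GZL{\beta}_{t_0}(v)\,dv$. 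Evaluating the $Z$-analogue of \eqref{GtLambda} at $v=0$ and using $h^{Z,\beta}(0)=1\ge h^{Z,\beta}(-u)$ yields $\GZL{\beta}_{t_0}(0)=k^{Z,\beta}(1-e^{-\beta t_0}\int_Vh^{Z,\beta}(-u)P^Z_{t_0}(du))\ge k^{Z,\beta}(1-e^{-\beta t_0})>0$; continuity of $\GZL{\beta}_{t_0}$ then gives $\GZL{\beta}_{t_0}(v)\ge c>0$ on some $(-\delta,\delta)\cap V$. Hence $\sup_x\int_{(-\delta,\delta)\cap V}|q(x+v)|dv\le M/c$, and because the supremum over $x\in\R^d$ absorbs translations along $V$, covering $B(0,1)\cap V$ by $\lceil1/\delta\rceil$ such windows produces $\sup_x\int_{B(0,1)\cap V}|q(x+v)|dv<\infty$.

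Finally $\pK\subseteq\KK$ by Proposition~\ref{calInKato}, and the inclusion is strict: the lift $q(x)=\sum_{k\ge1}2^k\ind_{(k,k+2^{-k})}(\langle x,e\rangle)$ along a unit vector $e$ spanning $V$ has uniformly bounded but non-vanishing local $V$-integral, so it lies in $\KK(X)\setminus\pK(X)$, matching the counterexample announced in the introduction.
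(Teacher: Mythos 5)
Your proposal is correct, and three of its four components (the two inclusions for $\pK(X)$ and the sufficiency half of the $\KK(X)$ description) coincide with the paper's argument: Lemma~\ref{lem:Glambda} for one direction, the $n=0$ term of \eqref{Glambda_form} plus strict positivity of $\GZL{\lambda+\nu^Y(\Rd)}$ near $0$ for the other, and the series expansion with $\GZLN{\beta}{n}_t(dv)\leq t^n\GZL{\beta}_t(dv)$ combined with the adaptation of \eqref{ineq:0reg} for sufficiency. The genuine difference is in the necessity of the $\KK(X)$ description. There your premise is mistaken: you claim the route through Lemma~\ref{lem:L_unif} is unavailable because it only controls $d$-dimensional averages, but the paper applies that lemma not to $X$ in $\Rd$ but to the \emph{one-dimensional} process $Z$ on $V$ (via Remark~\ref{obrot}), with the slice functions $v\mapsto q(x+v)$; the displayed lower bound $\int_0^t P_u|q|(x)\,du\geq e^{-t\nu^Y(\Rd)}\int_0^t\int_{V}|q(x+v)|\,P_u^Z(dv)\,du$ shows these slice functions satisfy the hypothesis of Lemma~\ref{lem:L_unif} for $Z$ uniformly in $x$, and since the constant $M'$ there is independent of the function, the conclusion is uniform in $x$ as well. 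Your replacement argument is nevertheless valid and self-contained: retaining the $n=0$ term, evaluating the $Z$-analogue of \eqref{GtLambda} at $v=0$ to get $\GZL{\beta}_{t_0}(0)\geq k^{Z,\beta}(1-e^{-\beta t_0})>0$ from $h^{Z,\beta}(0)=1$, spreading this positivity to a neighbourhood by the continuity granted in Lemma~\ref{lem:wlasnosci_Z} under (C'), and then covering $B(0,1)\cap V$ by finitely many windows whose translates are absorbed by the supremum over $x\in\Rd$. What each approach buys: the paper's reduction leans on a general lemma valid for every L\'{e}vy process and keeps the proof short; yours exploits the extra structure available precisely under (C') (continuity and positivity of the truncated potential density at the origin) and gives an explicit quantitative constant $M/c$ without invoking Lemma~\ref{lem:L_unif} at all. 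You also make the strictness $\pK(X)\subsetneq\KK(X)$ explicit by lifting the introduction's counterexample along $V$, which the paper leaves implicit in the difference between the two characterizations; that is a welcome completion rather than a deviation.
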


\begin{proof}
The condition postulated for the description of $\pK(X)$ is sufficient by Lemma \ref{lem:Glambda}. Next, by Remark \ref{obrot} and Lemma \ref{lem:Bret}
the $\lambda$-potential kernel of $Z$, that is $\GZL{\lambda}(dv)=\GZLN{\lambda}{0}(dv)$, has a density $\GZL{\lambda}(v)$ with respect to the Lebesgue measure on $V$, such that $\GZL{\lambda}(v)\geq \varepsilon>0$ if $v\in B(0,1)\cap V$ ($\varepsilon$ may depend on $\lambda$). Thus,
\begin{align*}
\int_{B(0,r)} |q(x+z)| G^{\lambda}(dz)\geq \int_{B(0,r)\cap V} |q(x+v)| \GZL{\lambda+\nu^Y(\Rd)}(dv)\geq \varepsilon \int_{B(0,r)\cap V} |q(x+v)|\,dv\,,
\end{align*}
which proves the necessity.
Further, the necessity of the condition proposed to describe $\KK(X)$ follows from Remark~\ref{obrot}, Lemma~\ref{lem:L_unif} and
\begin{align*}
\int_0^t P_u|q|(x)du
\geq \int_0^t \int_{\Rd\cap V} |q(x+v)|\, e^{-u\nu^Y(\Rd)} P_u^Z (dv)du
\geq e^{-t\nu^Y(\Rd)} \int_0^t \int_{\Rd\cap V} |q(x+v)|\, P_u^Z (dv)du.
\end{align*}
For the sufficiency we partially follow the proof of Theorem \ref{thm:0regH0}.
Note that
$
\int_0^t u^n e^{-\lambda u} P_u^Z(dv)\,du\leq t^n \GZL{\lambda}_t(dv)
$
which gives
\begin{align*}
G_t^{\lambda}(dx)\leq \sum_{n=0}^{\infty} \frac{t^n}{n!}\, \GZL{\lambda+\nu^Y(\Rd)}_t * (\nu^Y)^{*n} (dx)\,.
\end{align*}
Thus by Lemma \ref{lem:wlasnosci_Z} and adaptation of \eqref{ineq:0reg} we have with $c_t=(\lambda+\nu^Y(\Rd))t + ||P_t^Z f-f ||_{\infty}$,
\begin{align*}
\int_{\Rd}&|q(x+z)| G_t^{\lambda} (dz) \leq \sum_{n=0}^{\infty}\frac{t^n}{n!} \int_{\Rd} \left(\int_V |q(x+v+w)|\GZL{\lambda+\nu^Y(\Rd)}_t (dv)\right) (\nu^Y)^{*n}(dw) \\
&\leq \left(c_t \left( M_{\GZL{\lambda+\nu^Y(\Rd)}}\right)^2 (\lambda+\nu^Y(\Rd))^{-1} \sup_{x\in\Rd} \int_{B(0,1)\cap V} |q(x+v)|dv \right) \sum_{n=0}^{\infty} \frac{t^n}{n!} \int_{\Rd} (\nu^Y)^{*n}(dw)\,,
\end{align*}
which ends the proof.
\end{proof}

\subsection{Zero-potential kernel}

In the previous sections and subsections we have already used measures
$G_t^{\lambda}$,  $\lambda\geq 0$, $t\in(0,\infty]$.
Below we present additional sufficient assumptions on a L{\'e}vy process $X$ under which $G^0=G^0_{\infty}$ can be used to describe  $\KK(X)$.
The condition we want to analyze now is $q\in \pK^0(X)$ defined by
\begin{align}\label{cond_G_infty^0}
\lim_{r\to 0^+}\left[\sup_{x\in\Rd} \int_{B(0,r)} |q(z+x)|G^0(dz)\right]=0\,.
\end{align}
Since $G^{\lambda}(dz)\leq G^0(dz)$, \eqref{cond_G_infty^0} implies $q\in\pK(X)$  and thus $\pK^0(X)\subseteq \pK(X)\subseteq \KK(X)$ by Proposition \ref{calInKato}.
Our aim is to obtain the equivalence, i.e., the implication from $q\in \KK(X)$ to \eqref{cond_G_infty^0}, and this is the subcase of $\pK(X)=\KK(X)$.
We will assume that $X$ is transient and $\{0\}$ is polar (in Theorem~\ref{prop:G_unb_b} polarity 
follows
implicitly from other assumptions). 
The transience
is necessary, otherwise $G^0(dz)$ is
locally unbounded 
(see \cite[Theorem 35.4]{MR1739520}) and non-zero constant functions do not belong to $\pK^0(X)$,
which shows $\pK^0(X)\subsetneq \KK(X)$.
The polarity of $\{0\}$ assures $\pK(X)=\KK(X)$. Moreover, if $\{0\}$ is not polar, the class $\KK(X)$ is explicitly described by our previous theorems.
Both, transience and polarity of $\{0\}$
are to some extent encoded in the characteristic exponent $\psi$
(see \cite[Remark~37.7]{MR1739520} and Section~\ref{sec:scheme}).
Finally, we note that $q\in\pK^0(X)$ is equivalent to \eqref{C1} and $q\in\KK(X)$ to \eqref{C2}. Thus according to Figure~\ref{fig} and Remark~\ref{H1H2}, we focus on showing \eqref{H3} for $X$.

\begin{remark}\label{rem:point_X_trans}
If $X$ is transient, then
we have
\begin{align}\label{eq:point_X_trans}
\lim_{r\to 0^+} \PP^0(T_{\overline{B}(x,r)}<\infty)=\PP^0(T_{\{x\}}<\infty)\,,\quad x\in\Rd\,.
\end{align}
Such statement is not true in general, but here it follows from
$\PP^0(T_{\overline{B}(x,r)}<\infty)=\PP^0(T_{\overline{B}(x,r)}<\infty, T_{\{x\}}<\infty)+\PP^0(T_{\overline{B}(x,r)}<\infty, T_{\{x\}}=\infty)$, Lemma \ref{lem:point} and $\lim_{t\to\infty} |X_t|=\infty$ $\PP^0$ a.s.
\end{remark}
We say that a measure $G^0(dz)$ tends to zero at infinity if $\lim_{|x|\to \infty}\int_{\Rd} f(z+x) G^0(dz)=0$ for 
all 
$f\in C_c(\Rd)$.
Under certain assumptions on the {\it group} of the L{\'e}vy process \cite[Definition~24.21]{MR1739520} $G^0(dz)$ tends to zero for every transient $X$ if $d\geq 2$. 
The case $d=1$ is more complicated.
See \cite[Exercise~39.14]{MR1739520}
and
Remark~\ref{special_to_zero}.

\begin{lemma}\label{lem:G_to_zero}
Let $X$ be transient. If $G^0(dz)$ tends to zero at infinity then
$$h_3(X)=\sup_{x\neq 0} \PP^{0} (T_{\{x\}}<\infty)\,.$$
\end{lemma}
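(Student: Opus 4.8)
The plan is to follow the proof of Lemma~\ref{lem:ZhH3} verbatim, replacing the damped functional $\EE^0 e^{-\lambda T_{\overline{B}(x,r)}}$ by the hitting probability $f_r(x):=\PP^0(T_{\overline{B}(x,r)}<\infty)$, where $\overline{B}(x,0)=\{x\}$ so that $f_0(x)=\PP^0(T_{\{x\}}<\infty)$. First I would record, exactly as in Proposition~\ref{lem:h3_sub_Levy}, that by translation invariance together with the sandwich $\overline{B}(x,r/2)\subseteq B(x,r)\subseteq \overline{B}(x,r)$ one may rewrite $h_3(X)=\sup_{u>0}\inf_{r>0}\sup_{|x|\geq u} f_r(x)=:a$. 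Since $T_{\overline{B}(x,r)}\leq T_{\{x\}}$ gives $f_r(x)\geq f_0(x)$, the trivial lower bound $a\geq \sup_{x\neq 0} f_0(x)=\sup_{x\neq 0}\PP^0(T_{\{x\}}<\infty)$ drops out precisely as in \eqref{ineq:fr1}; the whole content of the lemma is therefore the reverse inequality.

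For the reverse inequality I would assume $a\in(0,1]$, fix $\varepsilon\in(0,a)$, choose $u>0$ with $\sup_{|x|\geq u}f_r(x)>a-\varepsilon$ for every $r>0$, and extract a sequence $\{x_n\}$ with $|x_n|\geq u$ and $f_{1/n}(x_n)>a-\varepsilon$. Granting (see the next paragraph) that $\{x_n\}$ is bounded, it has a limit point $y\neq 0$, and the monotonicity step of Lemma~\ref{lem:ZhH3} applies: for each $r>0$ some $\overline{B}(x_n,1/n)\subseteq \overline{B}(y,r)$, whence $T_{\overline{B}(y,r)}\leq T_{\overline{B}(x_n,1/n)}$ and $f_r(y)\geq f_{1/n}(x_n)>a-\varepsilon$. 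The \emph{pointwise} convergence $\lim_{r\to 0^+}f_r(y)=f_0(y)$ is exactly the statement of Remark~\ref{rem:point_X_trans} (valid because $X$ is transient), so $\PP^0(T_{\{y\}}<\infty)=f_0(y)\geq a-\varepsilon$, and letting $\varepsilon\to 0$ gives $\sup_{x\neq 0}\PP^0(T_{\{x\}}<\infty)\geq a$, closing the loop.

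The hard part, and the only place the hypothesis enters, is the boundedness of $\{x_n\}$. In Lemma~\ref{lem:ZhH3} this was free from $\EE^0 e^{-\lambda\tau_{m_0}}\to 0$; without exponential damping I would instead prove $\lim_{|x|\to\infty}\PP^0(T_{\overline{B}(x,1)}<\infty)=0$ directly from transience and the decay of $G^0$. The tool is a strong-Markov estimate at $T=T_{\overline{B}(x,1)}$: on $\{T<\infty\}$ one has $X_T\in\overline{B}(x,1)$, and for such $z$ the inclusion $\overline{B}(x-z,2)\supseteq \overline{B}(0,1)$ yields $\EE^{z}\int_0^\infty \ind_{\overline{B}(x,2)}(X_s)\,ds=G^0(\overline{B}(x-z,2))\geq G^0(\overline{B}(0,1))=:c$, with $c\in(0,\infty)$ since $X$ is transient. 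Hence
\begin{align*}
G^0(\overline{B}(x,2))\geq \EE^0\Big(T<\infty;\ \EE^{X_T}\!\int_0^\infty \ind_{\overline{B}(x,2)}(X_s)\,ds\Big)\geq c\,\PP^0\big(T_{\overline{B}(x,1)}<\infty\big).
\end{align*}
Choosing $f\in C_c(\Rd)$ with $\ind_{\overline{B}(0,2)}\leq f$, the assumption that $G^0$ tends to zero at infinity gives $G^0(\overline{B}(x,2))\leq \int f(z-x)\,G^0(dz)\to 0$ as $|x|\to\infty$, so $\PP^0(T_{\overline{B}(x,1)}<\infty)\to 0$. Since $f_r(x)\leq f_1(x)=\PP^0(T_{\overline{B}(x,1)}<\infty)$ for $r\leq 1$, the property $f_{1/n}(x_n)>a-\varepsilon>0$ forces $|x_n|$ to remain bounded, which is exactly the ingredient needed above.
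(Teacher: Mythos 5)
Your proof is correct, and its skeleton is the one the paper itself prescribes: reduce $h_3(X)$ to $\sup_{u>0}\inf_{r>0}\sup_{|x|\geq u}f_r(x)$ by translation invariance and the ball sandwich (the $\lambda=0$ analogue of Proposition~\ref{lem:h3_sub_Levy}), run the argument of Lemma~\ref{lem:ZhH3} with $f_r(x)=\PP^0(T_{\overline{B}(x,r)}<\infty)$, and replace Lemma~\ref{lem:point} by Remark~\ref{rem:point_X_trans} when passing to the limit $f_r(y)\to f_0(y)$. The genuine difference is the one step where the hypothesis on $G^0$ enters, namely the boundedness of $\{x_n\}$. The paper argues by contradiction: if $\{x_n\}$ were unbounded, then $f_2(x)>a-\varepsilon$ for all $x$ with $|x-x_n|<1$, and integrating $f_2$ against a test function $g(x_n-\cdot)$, after writing the hitting probability of $\overline{B}(0,2)$ as the potential of its equilibrium measure $m_{\overline{B}(0,2)}$ (Sato, Theorem 42.8), forces that integral to vanish as $n\to\infty$ because $G^0$ tends to zero at infinity, contradicting the lower bound $(a-\varepsilon)|B(0,1)|$. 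You instead prove the stronger pointwise statement $\lim_{|x|\to\infty}\PP^0(T_{\overline{B}(x,1)}<\infty)=0$ via the strong Markov comparison
\begin{align*}
G^0\big(\overline{B}(x,2)\big)\geq G^0\big(\overline{B}(0,1)\big)\,\PP^0\big(T_{\overline{B}(x,1)}<\infty\big),
\end{align*}
where $G^0(\overline{B}(0,1))\in(0,\infty)$ by right continuity of paths at $0$ and transience, and the left side tends to $0$ by the decay hypothesis. This is more elementary: it avoids the equilibrium-measure machinery entirely, and it isolates a clean quantitative fact (pointwise decay of ball-hitting probabilities at infinity) instead of extracting only a contradiction from an averaged, vague form of the decay of $G^0$. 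The auxiliary facts you use are all sound for a L\'evy process: $X_T\in\overline{B}(x,1)$ on $\{T<\infty\}$ by right continuity, the strong Markov property at the hitting time of a closed ball, and finiteness of $G^0$ on compacts by transience. So both routes consume the hypothesis in equivalent ways, and yours is a legitimate, slightly more self-contained alternative to the paper's argument.
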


\begin{proof}
The statement follows by the same proof as for Proposition  \ref{lem:h3_sub_Levy} but with $\lambda=0$ and a version of Lemma \ref{lem:ZhH3} for $\lambda=0$.
To prove the latter one we also repeat its proof with functions $f_r$ extended to $\lambda=0$, i.e., $f_r(x)=\PP^0(T_{\overline{B}(x,r)}<\infty)$ up to a moment when $a>0$ and a sequence $\{x_n\}$ such that $f_{1/n}(x_n)>a-\varepsilon$ are chosen. The rest of the proof easily applies with \eqref{eq:point_X_trans} in place of Lemma \ref{lem:point} as soon as we can show that $\{x_n\}$ is bounded. To this end assume that the sequence is unbounded. Since $f_r(x)=\PP^{y}(T_{\overline{B}(x+y,r)}<\infty)$, $r>0$, $y\in\Rd$,
for $r\in(0,1]$ and  $|x-x_n|<1$ we have
\begin{align}\label{ineq:contr}
a-\varepsilon <f_r(x_n)=\PP^{-x}(T_{\overline{B}(x_n-x,r)}<\infty)\leq \PP^{-x}(T_{\overline{B}(0,2)}<\infty)=f_{2}(x)\,,
\end{align}
Next, by \cite[Theorem 42.8 and Definition 41.6]{MR1739520} for $g\in C_c(\Rd)$ such that $\ind_{B(0,1)}\leq g$ we get
\begin{align*}
\int_{\Rd} g(x_n-x)f_2(x)\,dx=\int_{\Rd}\left[ \int_{\Rd} g(-v+w+x_n) G(dv)\right]m_{\overline{B}(0,2)}(dw)\quad \xrightarrow {n\to \infty} 0\,,
\end{align*}
since $m_{\overline{B}(0,2)}(dw)$ is finite and supported on $\overline{B}(0,2)$ and $G(dv)$ tends to zero at infinity.
This contradicts \eqref{ineq:contr} and ends the proof.
\end{proof}

\begin{theorem}\label{prop:G_to_zero}
Let $X$ be transient, $\{0\}$ be polar and $G^0(dz)$ tend to zero at infinity. Then $q\in\KK(X)$ if and only if \eqref{cond_G_infty^0} holds, i.e., $\pK^0(X)=\pK(X)=\KK(X)$.
\end{theorem}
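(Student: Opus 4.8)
The plan is to recognize that this is precisely the regime where Zhao's Figure~\ref{fig} machinery applies cleanly, so the whole argument reduces to verifying hypotheses~\eqref{H1}--\eqref{H3} for the \emph{base} process $X$ itself (not the subprocess $X^{\lambda}$), since here we want to describe $\KK(X)$ via the untruncated zero-potential $G^0$. We already noted that $q\in\pK^0(X)$ is exactly condition~\eqref{C1} for $X$ and $q\in\KK(X)$ is exactly condition~\eqref{C2} for $X$. Thus the target equivalence is the equivalence of \eqref{C1} and \eqref{C2} for $X$, and by the diagram this will follow once \eqref{H1}, \eqref{H2} and \eqref{H3} hold for $X$. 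The chain we need from the diagram is: \eqref{C1} always implies \eqref{C2}-type conditions through \eqref{C3} (the ``always'' and \eqref{H2} arrows give \eqref{C1}$\Rightarrow$\eqref{C3}$\Rightarrow$\eqref{C2}), and conversely \eqref{C2}$\Rightarrow$\eqref{C3} under \eqref{H1}, while \eqref{C3}$\Rightarrow$\eqref{C1} under \eqref{H3}. Since one inclusion $\pK^0(X)\subseteq\KK(X)$ is already recorded just before the statement, the real content is the reverse direction $\KK(X)\subseteq\pK^0(X)$.

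First I would dispose of \eqref{H1} and \eqref{H2}. Because $X$ is transient it is certainly non-Poisson (a compound Poisson process with the stated polarity/transience structure is excluded, and in any case we are in the polar-point regime so $X$ is non-Poisson), hence by Remark~\ref{H1H2} both $h_1(X)=0$ and $h_2(X)=0$, giving \eqref{H1} and \eqref{H2}. This is immediate and not the obstacle. Next I would establish \eqref{H3} for $X$, which by definition asks that $h_3(X)<1$. Here is where the hypotheses of the theorem enter decisively: Lemma~\ref{lem:G_to_zero} gives, under transience and the assumption that $G^0(dz)$ tends to zero at infinity, the exact identity
\begin{align*}
h_3(X)=\sup_{x\neq 0}\PP^0(T_{\{x\}}<\infty)\,.
\end{align*}
Now invoke that $\{0\}$ is polar: by definition polarity means $\PP^x(T_{\{0\}}=\infty)=1$ for all $x$, equivalently (by translation invariance, as in Remark~\ref{rem:0_pol_h}) $\PP^0(T_{\{x\}}<\infty)=0$ for every $x$. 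Therefore $h_3(X)=0<1$, so \eqref{H3} holds.

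With \eqref{H1}, \eqref{H2}, \eqref{H3} all verified for $X$, I would then read off the equivalence directly from Figure~\ref{fig}: the ``always'' arrow gives \eqref{C1}$\Rightarrow$\eqref{C3}, the \eqref{H2} arrow gives \eqref{C3}$\Rightarrow$\eqref{C2}, the \eqref{H1} arrow gives \eqref{C2}$\Rightarrow$\eqref{C3}, and the \eqref{H3} arrow gives \eqref{C3}$\Rightarrow$\eqref{C1}, so \eqref{C1}$\Leftrightarrow$\eqref{C2} for $X$. Translating back through the correspondences $\eqref{C1}\leftrightarrow\pK^0(X)$ and $\eqref{C2}\leftrightarrow\KK(X)$ yields $q\in\KK(X)\iff\eqref{cond_G_infty^0}$, and combined with the already-noted inclusions $\pK^0(X)\subseteq\pK(X)\subseteq\KK(X)$ this gives the full chain of equalities $\pK^0(X)=\pK(X)=\KK(X)$. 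I expect the genuine work to be entirely packaged inside Lemma~\ref{lem:G_to_zero}; once that identity for $h_3(X)$ is in hand, the proof is a short citation of the diagram, so the main obstacle (the boundedness of the minimizing sequence $\{x_n\}$, handled via the decay of $G^0$ at infinity) has already been absorbed into that lemma rather than into the present proof.
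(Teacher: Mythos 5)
Your proposal is correct and is essentially the paper's own argument: the paper likewise notes that $q\in\pK^0(X)$ is \eqref{C1} and $q\in\KK(X)$ is \eqref{C2} for the base process $X$, reduces everything to verifying \eqref{H3} for $X$ via Lemma~\ref{lem:G_to_zero} together with polarity of $\{0\}$ (so $h_3(X)=0$), and then reads the equivalence off Figure~\ref{fig} with \eqref{H1}, \eqref{H2} supplied by Remark~\ref{H1H2}. One small caveat: your parenthetical claim that transience by itself forces $X$ to be non-Poisson is false (e.g.\ a compound Poisson process with one-sided jumps is transient), but your fallback justification is the right one and suffices --- polarity of $\{0\}$ excludes compound Poisson processes, since for those $0$ is regular for $\{0\}$.
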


In the next result we improve \cite[Lemma 5]{MR1132313} and we cover some cases when $G^0(dz)$ may not tend to zero at infinity.
\begin{theorem}\label{prop:G_unb_b}
Let $X$ be transient and let $G^0(dz)$ have a density $G^0(z)$ with respect to the Lebesgue measure which is unbounded and bounded on $|z|\geq r$ for every $r>0$. Then $\pK^0(X)=\pK(X)=\KK(X)$.
\end{theorem}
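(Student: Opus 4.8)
The plan is to prove the only missing inclusion $\KK(X)\subseteq\pK^0(X)$, since $\pK^0(X)\subseteq\pK(X)\subseteq\KK(X)$ holds in general (Proposition~\ref{calInKato} together with $G^\lambda(dz)\le G^0(dz)$). As noted before the statement, $q\in\KK(X)$ is exactly \eqref{C2} and $q\in\pK^0(X)$ is exactly \eqref{C1}, both for the process $X$ itself (with $\zeta=\infty$, $\lambda=0$). So by Figure~\ref{fig} it suffices to verify \eqref{H1} and \eqref{H3} for $X$: then \eqref{C2}$\Rightarrow$\eqref{C3}$\Rightarrow$\eqref{C1}. Since $G^0(dz)$ has a density it carries no atom at $0$, so $X$ is non-Poisson and \eqref{H1} holds with $h_1(X)=0$ by Remark~\ref{H1H2}. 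Everything therefore reduces to showing $h_3(X)=0$.

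By translation invariance and the inclusions $\overline{B}(x,r/2)\subseteq B(x,r)\subseteq\overline{B}(x,r)$ (as in the reduction of Proposition~\ref{lem:h3_sub_Levy}), $h_3(X)=\sup_{u>0}\inf_{r>0}\sup_{|x|\ge u}\PP^0(T_{\overline{B}(x,r)}<\infty)$. To control these hitting probabilities I would use the occupation-time inequality obtained from the strong Markov property at $T:=T_{\overline{B}(x,r)}$. Writing $K=\overline{B}(x,r)$ and $u_K(w)=\EE^w\int_0^\infty\ind_K(X_t)\,dt=\int_K G^0(z-w)\,dz$, and using $X_T\in K$ on $\{T<\infty\}$, one gets
\begin{align*}
\int_{\overline{B}(x,r)}G^0(z)\,dz=u_K(0)\ge\PP^0(T<\infty)\,\inf_{w\in K}u_K(w),
\end{align*}
and hence, after the substitution $c=x-w\in\overline{B}(0,r)$,
\begin{align*}
\PP^0\bigl(T_{\overline{B}(x,r)}<\infty\bigr)\le\frac{\int_{\overline{B}(x,r)}G^0(z)\,dz}{\inf_{|c|\le r}\int_{\overline{B}(c,r)}G^0(y)\,dy}\,.
\end{align*}

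For the numerator I would use that $G^0$ is bounded off the origin: if $|x|\ge u$ and $r\le u/2$ then $|z|\ge u/2$ for all $z\in\overline{B}(x,r)$, so the numerator is at most $C_u\,\omega_d\,r^d$, where $C_u=\sup_{|z|\ge u/2}G^0(z)<\infty$ and $\omega_d$ is the volume of the unit ball. For the denominator I would use that $G^0$ blows up at $0$: given $N>0$ choose $\delta>0$ with $G^0>N$ on $\overline{B}(0,\delta)$; then for $r\le\delta$ and every $|c|\le r$ the lens $\overline{B}(c,r)\cap\overline{B}(0,r)$ lies in $\overline{B}(0,\delta)$ and has volume at least $\kappa_d\,r^d$ for a dimensional constant $\kappa_d>0$, so that $\int_{\overline{B}(c,r)}G^0\ge N\kappa_d r^d$. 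Combining the two bounds gives $\PP^0(T_{\overline{B}(x,r)}<\infty)\le C_u\omega_d/(N\kappa_d)$ for all $|x|\ge u$ and $r\le\min(u/2,\delta)$; letting $N\to\infty$ yields $\inf_{r>0}\sup_{|x|\ge u}\PP^0(\cdots)=0$ for each $u>0$, whence $h_3(X)=0$, which is \eqref{H3}. (In particular $\{0\}$ is polar, as promised, since by Remark~\ref{rem:point_X_trans} these hitting probabilities converge to $\PP^0(T_{\{x\}}<\infty)$.)

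The delicate point is the denominator estimate: one needs the singularity of $G^0$ at the origin to be captured \emph{uniformly} by the shifted balls $\overline{B}(c,r)$, $|c|\le r$, at order strictly larger than $r^d$. The clean lens argument above is legitimate once one knows $G^0(z)\to\infty$ as $z\to0$; the real work is to exclude a merely directional ($\limsup$-type) blow-up of $G^0$ that a well-placed shifted ball could avoid, which I expect to require the lower semicontinuity and the excessive-function structure of the potential density $G^0$ rather than the pointwise hypothesis alone. This is precisely the place where rotational symmetry is used in \cite[Lemma~5]{MR1132313} (there radial monotonicity makes the worst center $|c|=r$ explicit), and replacing that symmetry by the bare unboundedness of $G^0$ is the main obstacle. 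Once $h_3(X)=0$ is established, Figure~\ref{fig} gives $\KK(X)\subseteq\pK^0(X)$ and therefore $\pK^0(X)=\pK(X)=\KK(X)$.
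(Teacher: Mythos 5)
Your overall skeleton agrees with the paper's: you reduce to the inclusion $\KK(X)\subseteq\pK^0(X)$, identify $q\in\KK(X)$ with \eqref{C2} and $q\in\pK^0(X)$ with \eqref{C1} for $X$ itself, invoke Figure~\ref{fig} together with \eqref{H1} (valid, since a process whose potential measure has a density cannot be compound Poisson), and so reduce everything to proving $h_3(X)=0$. Your numerator bound, via $\sup_{|z|\geq u/2}G^0(z)<\infty$ for centers $|x|\geq u$ and radii $r\leq u/2$, is also exactly the paper's first ingredient. The problem is the second ingredient. Your occupation-time inequality
\begin{align*}
\PP^0\bigl(T_{\overline{B}(x,r)}<\infty\bigr)\leq \frac{\int_{\overline{B}(x,r)}G^0(z)\,dz}{\inf_{|c|\leq r}\int_{\overline{B}(c,r)}G^0(y)\,dy}
\end{align*}
is correct, but your lower bound for the denominator assumes you can choose $\delta>0$ with $G^0>N$ on all of $\overline{B}(0,\delta)$, i.e., it assumes $\lim_{z\to 0}G^0(z)=\infty$. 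The hypothesis of the theorem gives only unboundedness of $G^0$ near the origin (a $\limsup$-type blow-up), and, as you yourself point out, nothing in your argument excludes a density whose blow-up is confined to a set so thin that $\int_{\overline{B}(c,r)}G^0(y)\,dy=O(r^d)$ for a badly placed center $|c|\leq r$. You flag this as ``the main obstacle'' and leave it open, so the proposal is incomplete precisely at its decisive step; the parenthetical claim that polarity of $\{0\}$ comes out as a byproduct therefore also remains unproved in your route.

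The paper closes this step by a mechanism that requires no quantitative lower bound at all. By \cite[Proposition~42.13 and Definition~42.9]{MR1739520} the hitting probability is represented through the equilibrium (capacitary) measure,
\begin{align*}
\PP^x(T_{B(0,r)}<\infty)=\int_{\overline{B}(0,r)}G^0(y-x)\,m_{B(0,r)}(dy)\leq \Big[\sup_{|y|\geq u/2}G^0(y)\Big]\, C\!\left(B(0,r)\right),
\end{align*}
since the total mass of $m_{B(0,r)}$ is the capacity $C(B(0,r))$. Then $C(B(0,r))\to C(\{0\})$ as $r\to 0^+$ (\cite[Proposition~42.10 and (42.20)]{MR1739520} together with Remark~\ref{rem:point_X_trans}), and $C(\{0\})=0$ because $\{0\}$ is polar, the polarity being itself a consequence of the hypotheses (density unbounded, yet bounded off the origin) via \cite[Theorem~41.15 and~43.3]{MR1739520}. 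In other words, the singularity of $G^0$ at $0$ enters only qualitatively, through polarity and the vanishing of the capacity of $\{0\}$, not through a uniform pointwise blow-up; this is exactly the substitute for the radial monotonicity used in \cite[Lemma~5]{MR1132313} that your lens argument lacks. To salvage your approach you would need to derive the denominator bound from excessivity and capacity theory, which essentially amounts to reproving these results of \cite{MR1739520}.
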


\begin{proof}
We note that
the polarity of $\{0\}$ follows by our assumptions (see \cite[Theorem~41.15 and~43.3]{MR1739520}).
By \cite[Proposition~42.13 and Definition~42.9]{MR1739520} for $r>0$ we have
$$
\PP^x(T_{B(0,r)}<\infty)=\int_{\overline{B}(0,r)}G^0(y-x)\,m_{B(0,r)}(dy),\quad x \in\Rd .
$$
Next, for $u>0$, $|x|\geq u$ and $0<r<u/2$ we obtain,
$$
\PP^x(T_{B(0,r)}<\infty)\leq \Big[ \sup_{|y|\geq u/2 }G^0(y)\Big] C\!  \left(B(0,r)\right)\,,$$
where $\rm C(\cdot)$ stands for capacity.
By \cite[Proposition 42.10 and (42.20)]{MR1739520} and Remark \ref{rem:point_X_trans} we have $\lim_{r\to 0^+} C(B(0,r))=C(\{0\})$ (see also \cite[Proposition 8.4]{MR0346919}). This gives
\begin{align*}
h_3(X)
= \sup_{u>0} \inf_{r>0} \sup_{|x|\geq u} \PP^x ( T_{B(0,r)}<\infty)
&\leq  \sup_{u>0} \Big[ \sup_{|y|\geq u/2 }G(y)\Big] \inf_{0<r<u/2} C\!  \left(B(0,r)\right)\\
&= \sup_{u>0} \Big[ \sup_{|y|\geq u/2 }G(y)\Big] C(\{0\})\,.
\end{align*}
Finally, since $\{0\}$ is polar, by \cite[Theorem 42.19]{MR1739520} we have $C(\{0\})=0$ and so \eqref{H3} holds with $h_3(X)=0$.
\end{proof}

\section{Further discussion and applications}

In this section we give additional results for isotropic unimodal L{\'e}vy processes concerning (the implication) $\pK(X)\subseteq \KK(X)$, we apply general results to a subclass of subordinators and we present examples.

We recall from \cite{BGR1} the definition of weak scaling.
Let $\lt\in [0,\infty)$ and
 $\phi$ be a non-negative non-zero  function on $(0,\infty)$.
We say that
$\phi$ satisfies {the} weak lower scaling condition (at infinity) if there are numbers
$\la\in \R
$
and  $\lC
\in(0,1]$,  such that
\begin{align*}
 \phi(\LL \theta)\ge
\lC\LL^{\,\la} \phi(\theta)\quad \mbox{for}\quad \LL\ge 1, \quad\theta>\lt.
\end{align*}
In short we say that $\phi$ satisfies WLSC($\la, \lt,{\lC}$) and write $\phi\in\WLSC{\la}{ \lt}{\lC}$.
Similarly, we consider
 $\ut\in [0,\infty)$.
The weak upper scaling condition holds if there are numbers $\ua \in \R$
and $\uC{\in [1,\infty)}$ such that
\begin{align*}
\phi(\LL\theta)\le
\uC\LL^{\,\ua} \phi(\theta)\quad \mbox{for}\quad \LL\ge 1, \quad\theta> \ut.
\end{align*}
In short, $\phi\in\WUSC{\ua}{ \ut}{\uC}$.

\subsection{Isotropic unimodal L\'{e}vy processes}

A measure on $\Rd$ is called isotropic
unimodal, in short, unimodal, if it is absolutely continuous on $\Rd\setminus \{0\}$
with a radial non-increasing density (such measures may have an atom at the origin). A L\'evy process $X$ is called (isotropic) unimodal if all of its one-dimensional distributions $P_t(dx)$ are
unimodal.
Unimodal pure-jump
L\'evy processes are characterized in \cite{MR705619}
by isotropic unimodal L\'evy measures
$\nu(dx)=\nu(x)dx=\nu(|x|)dx$.
The distribution of $X_t$ has a radial non-increasing density $p(t,x)$ on $\Rd\setminus\{0\}$, and atom at the origin, with mass $\exp[-t\nu(\Rd)]$
(no atom if $\psi$ is unbounded).

For a continuous non-decreasing function $\phi : [0,\infty) \to [0,\infty)$, {such that $\phi(0)=0$,} we let $\phi(\infty)=\lim_{s\to \infty}\phi(s)$ and
we define
the generalized left inverse $\phi^-: {[0,\infty) \to [0, \infty]}$,
\begin{align*}
\phi^-(u) = \inf \{{s}\geq 0 \colon \phi (s)= u\}
=\inf \{{s}\geq 0 \colon \phi (s)\geq u\},\quad 0\leq u<\infty,
\end{align*}
with the convention that $\inf \emptyset = \infty$. The function is increasing and c\`agl\`ad where finite.
Notice that $\phi(\phi^-(u))=u$ for $u\in [0,\phi(\infty)]$ and $\phi^-(\phi(s))\leq s$ for $s\in [0,\infty)$. Moreover, by the continuity of $\phi$  we have $\phi^-(\phi(s)+\varepsilon)>s$ for $\varepsilon>0$ and $s\in [0,\infty)$.
We also define  $f^*(u)=\sup_{|x|\leq u}|f(x)|$ for $f\colon\Rd\to\R$.

In view of general results for Schr{\"o}dinger perturbations \cite[Theorem 3]{MR3000465} and the so-called 3G type inequalities \cite[(40) and Corollary 11]{MR2457489} it is desirable to have the following results which extend \cite[Theorem 1.28]{MR1772266} and \cite[Proposition 4.3]{MR3200161} (see also \cite[Remark 2]{MR3000465}).

\begin{proposition}\label{cor:Kato1}Let $X$ be unimodal.
For $t_0\in(0,\infty]$, $r>0$ and $0<t< t_0$,
$$\sup_{x\in\Rd}\int^t_0P_u|q|(x)du\leq \left(1+\frac{t}{|B(0,1/2)|r^d G_{t_0}^0(r)}\right)
\left[ \sup_{x\in\Rd}\int_{B(x,r)}|q(z)| G_{t_0}^0(z-x)dz\right]\,,$$
where $G_{t_0}^0(z)=\int_0^{t_0} p(u,z)\,du$, $z\in\Rd$, and $G_{t_0}^0(r)=G_{t_0}^0(x)$, $|x|=r$.
\end{proposition}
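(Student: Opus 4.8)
The plan is to turn the time integral into a potential against a radial kernel and then split it into a near and a far contribution. First I would apply Tonelli's theorem to write
\begin{equation*}
\int_0^t P_u|q|(x)\,du=\int_{\Rd}|q(z)|\,G_t^0(z-x)\,dz,\qquad G_t^0(w)=\int_0^t p(u,w)\,du,
\end{equation*}
and record two structural facts inherited from unimodality: each $p(u,\cdot)$ is radial and non-increasing, hence so is $w\mapsto G_t^0(w)$; and $\int_{\Rd}G_t^0(w)\,dw=\int_0^t\big(\int_{\Rd}p(u,w)\,dw\big)\,du\le t$, so $G_t^0$ is an \emph{integrable} weight of total mass at most $t$. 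I would then split the $z$-integral over $B(x,r)$ and over $\Rd\setminus B(x,r)$, writing $\Lambda=\sup_{x}\int_{B(x,r)}|q(z)|G_{t_0}^0(z-x)\,dz$ for the bracket on the right.

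For the near part, since $t<t_0$ we have $G_t^0(w)\le G_{t_0}^0(w)$ pointwise, so
\begin{equation*}
\int_{B(x,r)}|q(z)|\,G_t^0(z-x)\,dz\le\int_{B(x,r)}|q(z)|\,G_{t_0}^0(z-x)\,dz\le\Lambda ,
\end{equation*}
uniformly in $x$. This already accounts for the summand $1\cdot\Lambda$ in the asserted bound.

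The far part is the crux, and here I would combine two ingredients. The first is a local $L^1$ control of $q$: for every $y$ and every $\zeta\in B(y,r)$ radial monotonicity gives $G_{t_0}^0(\zeta-y)\ge G_{t_0}^0(r)$, whence $\sup_{y}\int_{B(y,r)}|q(\zeta)|\,d\zeta\le\Lambda/G_{t_0}^0(r)$. The second is the integrability $\int_{\Rd}G_t^0\le t$ noted above. The point is that the naive estimate $\int_{\Rd\setminus B(x,r)}|q(z)|G_t^0(z-x)\,dz\le\big(\sup_{|w|\ge r}G_t^0(w)\big)\int_{\Rd\setminus B(x,r)}|q|$ is useless, since the global $L^1$ norm of $q$ may be infinite (already for $q\equiv1$). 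Instead I would use unimodality to \emph{localise}: since $G_t^0\,\ind_{\{|\cdot|\ge r\}}$ is a radial non-increasing kernel, a maximal-function/averaging argument bounds its convolution with $|q|$ by its $L^1$ mass times a local average of $|q|$ at scale $r/2$, yielding a bound of the form
\begin{equation*}
\int_{\Rd\setminus B(x,r)}|q(z)|\,G_t^0(z-x)\,dz\le\frac{\|G_t^0\|_{L^1}}{|B(0,1/2)|\,r^d}\,\sup_{y}\int_{B(y,r)}|q(\zeta)|\,d\zeta\le\frac{t}{|B(0,1/2)|\,r^d\,G_{t_0}^0(r)}\,\Lambda .
\end{equation*}
Adding the near and far estimates and taking $\sup_x$ would give the claim.

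The main obstacle is precisely this far-field estimate: one must convert an a priori non-integrable tail into the local quantity $\Lambda$, and the mechanism that makes this possible is the interplay between the radial monotonicity of the unimodal kernel $G_t^0$ and its finite total mass $\le t$. Pinning down the exact geometric constant $|B(0,1/2)|\,r^d=|B(0,r/2)|$ — rather than a mere dimensional multiple of it — is the delicate bookkeeping step; I would sanity-check the whole inequality on the test function $q\equiv1$, for which both sides are computable and the elementary lower bound $\int_{B(0,r)}G_{t_0}^0\ge|B(0,r/2)|\,G_{t_0}^0(r)$ reproduces exactly the stated factor.
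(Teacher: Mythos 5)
Your proposal is, in substance, the same argument as the paper's: the paper's entire proof consists of citing \cite[Lemma 4.2]{MR3200161} with $k(x)=\int_0^t p(u,x)\,du$ and $K=G^0_{t_0}$, and that lemma \emph{is} your near/far decomposition, with the same constant $|B(0,1/2)|\,r^d K(r)$ in front of the far part and the same use of $k\le K$ for the near part. So you have reconstructed the outsourced lemma rather than invoked it; the route does not differ, only the level of self-containedness.

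There is, however, one repair needed at the crux, which you only sketch. The kernel $G_t^0\,\ind_{\{|\cdot|\ge r\}}$ is \emph{not} radially non-increasing (it vanishes on $B(0,r)$ and jumps up at $|w|=r$), and the genuine maximal-function bound $|q|*G_t^0(x)\le \|G_t^0\|_{L^1}\, M|q|(x)$ does not localize to scale $r$ (the maximal function sees arbitrarily small balls), so neither fact can be quoted as you state it. What does work, and produces exactly your constant, is monotonicity of the \emph{full} kernel combined with the restriction $|v|\ge r$: for such $v$ the ball $B_v=B\bigl(v-\tfrac{r}{2}\tfrac{v}{|v|},\,\tfrac{r}{2}\bigr)$ is contained in $\overline{B}(0,|v|)$, so radial monotonicity gives $G_t^0(v)\le |B(0,r/2)|^{-1}\int_{B_v}G_t^0(w)\,dw$; inserting this and applying Fubini, and noting that $w\in B_v$ together with $|v|\ge r$ forces $|v-w|<r$, yields
\begin{equation*}
\int_{|z-x|\ge r}|q(z)|\,G_t^0(z-x)\,dz\;\le\;\frac{\|G_t^0\|_{L^1}}{|B(0,r/2)|}\,\sup_{y\in\Rd}\int_{B(y,r)}|q(\zeta)|\,d\zeta\,,
\end{equation*}
which is your far-field bound with $|B(0,r/2)|=|B(0,1/2)|r^d$. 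With this substituted for the sketched ``averaging argument,'' your proof is complete and matches the content of the cited lemma. (One caveat you share with the paper: writing $\int_0^t P_u|q|(x)\,du=\int_{\Rd}|q(z)|G_t^0(z-x)\,dz$ silently assumes $P_u(dz)=p(u,z)\,dz$, i.e.\ that the unimodal process has no atom at the origin; the compound Poisson case carries an extra atom term that neither argument controls.)
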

\begin{proof}
We use \cite[Lemma 4.2]{MR3200161} with $k(x)=\int^t_0p (u,x)du$ and $K(x)=G_{t_0}^0(x)$.
\end{proof}

In what follows we assume that $d\geq 3$ and that the L\'{e}vy-Khintchine exponent $\psi$ is unbounded. Then since $X$ is (isotropic) unimodal by \cite[Theorem~37.8]{MR1739520} it is transient and the measure $G^0(dz)$ has a radially non-increasing density $G^0(z)$. This density is unbounded (see \cite[Theorem~43.9 and Theorem~43.3]{MR1739520}). Thus Theorem~\ref{prop:G_unb_b} applies and $\pK^0(X)=\pK(X)=\KK(X)$. Under additional assumptions we investigate this relations. 
\begin{remark}
Below we use the result of \cite[Theorem 3]{G} which says that if $X$ is unimodal and $d\geq 3$ we always have $G^0(x)\leq C /(|x|^d \psi^*(|x|^{-1}))$, $x\in\Rd$, for some $C>0$. If~additionally $\psi\in {\rm WLSC}(\la,\lt,\lC)$, $\la>0$, then $c /(|x|^d \psi^*(|x|^{-1}))\leq G^0(x)$ for $|x|$ small enough and some $c>0$.
\end{remark}

\begin{corollary}\label{cor:uni_upper}
Let $d\geq 3$, $X$ be unimodal
with $\psi\in {\rm WLSC}(\la,\lt,\lC)$, $\la>0$. There exist constants $C=C(d,\la,\lC)$ and $b=(d,\la,\lC)$ such that for any $0<t<1/\psi^*(\lt/b)$ and $q:\Rd\to\R$,
$$\sup_{x\in\Rd}\int^t_0P_u |q|(x)du\leq C
\sup_{x\in\Rd}\int_{B(x,1/(\psi^*)^-(1/t))}|q(z)|G^0(z-x)dz.$$
\end{corollary}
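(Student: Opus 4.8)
The plan is to invoke Proposition~\ref{cor:Kato1} with $t_0=\infty$, so that $G_{t_0}^0=G^0$, and with the specific radius $r=1/(\psi^*)^-(1/t)$. With this choice the right-hand side of Proposition~\ref{cor:Kato1} is literally $\sup_{x\in\Rd}\int_{B(x,r)}|q(z)|\,G^0(z-x)\,dz$, which is exactly the quantity appearing in the statement. Hence the entire task reduces to bounding the prefactor
\[
1+\frac{t}{|B(0,1/2)|\,r^d\,G^0(r)}
\]
by a constant $C=C(d,\la,\lC)$, where $G^0(r)$ denotes $G^0(x)$ for $|x|=r$ (meaningful since $G^0$ is radial).

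First I would check that the constraint $0<t<1/\psi^*(\lt/b)$ forces $r$ to be small. Since $\psi$ is continuous and unbounded with $\psi(0)=0$, its radial maximal function $\psi^*$ is continuous, non-decreasing, vanishes at $0$ and tends to $\infty$; thus $(\psi^*)^-$ is well defined and $\psi^*((\psi^*)^-(u))=u$ for every $u\in[0,\infty)$. Writing $1/t=\psi^*(\lt/b)+\varepsilon$ with $\varepsilon=1/t-\psi^*(\lt/b)>0$ and applying the property $\phi^-(\phi(s)+\varepsilon)>s$ recorded before the statement (with $\phi=\psi^*$, $s=\lt/b$), I obtain $(\psi^*)^-(1/t)>\lt/b$, i.e. $r<b/\lt$. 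The constant $b=b(d,\la,\lC)$ is to be chosen precisely as the threshold in the Remark preceding the Corollary: under $\psi\in\WLSC{\la}{\lt}{\lC}$ with $\la>0$ the lower bound $G^0(x)\ge c/(|x|^d\psi^*(|x|^{-1}))$ holds, with $c=c(d,\la,\lC)>0$, for all $|x|<b/\lt$. When $\lt=0$ this estimate is global, the constraint on $t$ becomes vacuous, and $r$ is unrestricted.

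With $r<b/\lt$ secured, the lower bound applies at $|x|=r$ and, using $r^{-1}=(\psi^*)^-(1/t)$ together with $\psi^*((\psi^*)^-(1/t))=1/t$, gives
\[
r^d\,G^0(r)\ge \frac{c}{\psi^*(r^{-1})}=\frac{c}{\psi^*\big((\psi^*)^-(1/t)\big)}=c\,t.
\]
Therefore the factor $t$ in the numerator of the prefactor is exactly cancelled, and the prefactor is at most $1+1/(|B(0,1/2)|\,c)=:C(d,\la,\lC)$. Feeding this into Proposition~\ref{cor:Kato1} yields the asserted inequality.

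The only genuinely delicate point is the quantitative reading of the phrase ``for $|x|$ small enough'' in the Remark: one must extract from \cite[Theorem~3]{G} both the explicit threshold of the form $b/\lt$ and the dependence $c=c(d,\la,\lC)$ of the lower constant, so that $b$ indeed depends only on $d,\la,\lC$. This is natural, since the scaling-based lower estimate of \cite{G} is effective exactly in the range governed by the WLSC threshold $\lt$. Everything else is bookkeeping with the generalized inverse, and I expect no further obstacle.
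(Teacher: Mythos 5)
Your proposal is correct and follows essentially the same route as the paper's proof: apply Proposition~\ref{cor:Kato1} with $t_0=\infty$ and $r=1/(\psi^*)^-(1/t)$, then use the lower bound $G^0(r)\geq c/(r^d\psi^*(r^{-1}))$ from \cite[Theorem 3]{G} together with $\psi^*(r^{-1})=1/t$ to get $r^d G^0(r)\geq ct$ and absorb the prefactor into $C$. In fact you supply more detail than the paper does, in particular the verification via $\phi^-(\phi(s)+\varepsilon)>s$ that $t<1/\psi^*(\lt/b)$ forces $r<b/\lt$, which the paper states without proof.
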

\begin{proof}
We let $t_0=\infty$ in Proposition \ref{cor:Kato1}.
For $0<t<\infty$ we take $r= 1/(\psi^*)^-(1/t)>0$. Since $\psi^*(r^{-1})=1/t$ by \cite[Theorem 3]{G}
$
r^d G^0(r)\geq c /\psi^*(r^{-1})= c t
$
if $1/(\psi^*)^-(1/t)\leq b/\lt$ for some constant $c>0$.
The last holds if $t<1/\psi^*(\lt/b)$.
\end{proof}

\begin{lemma}\label{lem:uni_low}
Let
$d\geq 3$, $X$ be unimodal and $\psi\in {\rm WLSC}(\la,\theta,\lC)\cap {\rm WUSC} (\ua,\theta,\uC)$, $\la,\ua\in(0,2)$. Then there exist constants $c=c(d,\la,\ua,\lC,\uC)$ and $a=(d,\la,\ua,\lC,\uC)$ such that for any $0<t<1/\psi^*(\theta/a)$ and $q:\Rd\to\R$,
$$\sup_{x\in\Rd}\int^t_0 P_u|q|(x)du\geq c
\sup_{x\in\Rd}\int_{B(x,1/(\psi^*)^-(1/t))}|q(z)|G^0(z-x)dz.$$
\end{lemma}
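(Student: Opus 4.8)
The plan is to reverse the reduction used for Corollary~\ref{cor:uni_upper}: instead of bounding $G_t^0$ from above by $G^0$ on a ball, I will bound it from below. Writing $r=r_t:=1/(\psi^*)^-(1/t)$ and using
\begin{align*}
\int_0^t P_u|q|(x)\,du=\int_{\Rd}|q(z)|\,G_t^0(z-x)\,dz\geq \int_{B(x,r)}|q(z)|\,G_t^0(z-x)\,dz\,,
\end{align*}
the whole statement reduces to the pointwise comparison
\begin{align}\label{plan:key}
G_t^0(w)\geq c\,G^0(w)\,,\qquad |w|\leq r\,,
\end{align}
with $c>0$ depending only on $d,\la,\ua,\lC,\uC$. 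Once \eqref{plan:key} is established one multiplies by $|q(x+w)|$, integrates over $B(0,r)$ and takes $\sup_x$ to finish.

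The geometric input is the meaning of $r$. By continuity of $\psi^*$ one has $\psi^*(1/r)=1/t$, and since $\psi^*$ is non-decreasing, $|w|\leq r$ forces $\psi^*(1/|w|)\geq \psi^*(1/r)=1/t$, i.e. the natural time scale $t_*:=1/\psi^*(1/|w|)$ satisfies $t_*\leq t$. Consequently $G_t^0(w)=\int_0^t p(u,w)\,du\geq \int_0^{t_*}p(u,w)\,du$, so it suffices to show that this short-time part already carries the full order of $G^0(w)$.

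For that I would invoke the sharp two-sided heat-kernel bounds for unimodal L\'evy processes under global weak scaling (cf. \cite{BGR1} and \cite{G}): in the off-diagonal regime $u\leq t_*$ one has $p(u,w)\geq c\,u\,\psi^*(1/|w|)/|w|^d$, valid precisely when $1/|w|$ and $1/u$ exceed the scaling threshold $\theta$, which is guaranteed by the restriction $0<t<1/\psi^*(\theta/a)$ (this fixes the constant $a$). Integrating,
\begin{align*}
\int_0^{t_*}p(u,w)\,du\geq \frac{c\,\psi^*(1/|w|)}{|w|^d}\,\frac{t_*^2}{2}=\frac{c}{2\,|w|^d\,\psi^*(1/|w|)}\,.
\end{align*}
On the other hand, the upper bound $G^0(w)\leq C/(|w|^d\,\psi^*(1/|w|))$ holds for every unimodal $X$ with $d\geq 3$ by \cite[Theorem~3]{G} (the ``always'' part of the Remark, so that the lower bound for $G^0$ is not even needed). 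Combining the last two displays yields \eqref{plan:key}, and hence the lemma.

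The main obstacle is \eqref{plan:key}, and within it the lower heat-kernel estimate: one must make sure the off-diagonal lower bound $p(u,w)\gtrsim u\,\psi^*(1/|w|)/|w|^d$ is available on the whole range $0<u\leq t_*$, $|w|\leq r$, with constants depending only on the scaling data. This is exactly where both ${\rm WLSC}$ and ${\rm WUSC}$ with $\la,\ua\in(0,2)$ enter, WUSC being responsible for converting the scaling of $\psi$ into a matching lower estimate for the jump intensity that drives the short-time density. The remaining delicate point is purely the bookkeeping between $\psi$, $\psi^*$ and the generalized inverse $(\psi^*)^-$, together with the calibration of $a$ so that $|w|\leq r$ and $u\leq t_*$ keep $1/|w|$ and $1/u$ above the threshold $\theta$ throughout.
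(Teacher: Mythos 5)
Your proposal is correct and follows essentially the same route as the paper: both reduce the lemma to the pointwise bound $G_t^0(w)\geq c\,G^0(w)$ on $|w|\leq 1/(\psi^*)^-(1/t)$, obtain it by integrating the off-diagonal lower heat-kernel estimate $p(u,w)\gtrsim u\,\psi^*(1/|w|)/|w|^d$ of \cite{BGR1} over the short time interval $[0,1/\psi^*(1/|w|)]\subseteq[0,t]$, and then compare with the upper bound $G^0(w)\leq C/(|w|^d\psi^*(1/|w|))$ of \cite[Theorem~3]{G} (indeed only the upper bound is needed, exactly as you note). The threshold bookkeeping you flag as the delicate point is handled in the paper precisely as you sketch: $t<1/\psi^*(\theta/a)$ gives $|w|<a/\theta$ and $u\,\psi^*(\theta/a)<1$ on the relevant range, which is what \cite[Theorem~21 and Lemma~17]{BGR1} (with $r_0=a$) require.
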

\begin{proof}
Let $x\in\Rd$ be such that $|x|<1/(\psi^*)^-(1/t)$, which gives $1/\psi^*(|x|^{-1})\leq t$. Further, since $t<1/\psi^*(\theta/a)$ implies $1/(\psi^*)^-(1/t)<a/\theta$ we get $|x|<a/\theta$ and also $u\,\psi^*(\theta/a)<1$ if $u< 1/\psi^*(|x|^{-1})$. Then \cite[Theorem 21 and Lemma 17]{BGR1} ($r_0=a$) yield
\begin{align*}
\int_0^t p(u,x)du
\geq \int_0^{1/\psi^*(|x|^{-1})} p(u,x)du
\geq c^* \int_0^{1/\psi^*(|x|^{-1})} \frac{u \psi^*(|x|^{-1})}{|x|^d}du
= \frac{c^*}{2|x|^d \psi^*(|x|^{-1})}\,.
\end{align*}
Finally, we apply  \cite[Theorem 3]{G} to obtain
$$
\int_0^t p(u,x)du\geq c \,G^0(x)\,,\qquad {\rm for}\quad |x|<1/(\psi^*)^-(1/t)\,.
$$
\end{proof}

\subsection{Subordinators}\label{sec:sub}

Let $X$ be a subordinator (without killing) with the Laplace exponent $\phi$.
Then $\phi$ is a Bernstein function (in short BF) with zero killing term. Two important subclasses of BF are special Bernstein functions (SBF) and complete Bernstein functions (CBF). We refer the reader to \cite{MR2978140} for definitions and an overview.
Since the cases when $\phi$ is bounded (equivalently $X$ is a compound Poisson process) or when $X$ has a non-zero drift $\gamma_0$, are completely described by Theorem \ref{calInKato} and Theorem \ref{bouVar1}, we assume that
\begin{itemize}
\item[(S1)] $\phi$ is unbounded ($X$ is non-Poisson)
 and $\gamma_0=0$.
\end{itemize}
Note that for $d=1$ if a L{\'e}vy process is non-Poisson and $A=0$, $\gamma_0=0$, $\int_{\R}(|x|\wedge 1) \nu(dx)<\infty$,
then we are in the case (A) of Section~\ref{sec:scheme} (see Remark~\ref{opisH0}).
Thus by 
Theorem~\ref{prop:KatoGeneral}
the following is true for subordinators.
\begin{remark}\label{sub}
If $X$ satisfies (S1), then 
$\{0\}$ is polar and $\pK(X)=\KK(X)$.
\end{remark}
\noindent
We impose further assumptions on the exponent $\phi$ to study $G^{\lambda}(dz)$, $\lambda\geq 0$, and describe its behaviour near the origin:
\begin{itemize}
\item[(S2)] $a+\phi\in {\rm SBF}$ for some $a\geq 0$ (see \cite[Remark 11.21]{MR2978140}),
\item[(S3)] $\dfrac{\phi'}{\phi^2} \in {\rm WUSC}(-\beta,\ut,\uC)$, $\beta>0$.
\end{itemize}
We shall mention that (S2) is always satisfied if $\phi\in$ CBF. Indeed, 
if $\phi\in {\rm CBF}$, then $a+\phi\in {\rm CBF}$, $a\geq0$, and ${\rm CBF}\subset {\rm SBF}$.

\begin{remark}\label{special_to_zero}
Recall that $X$ is a subordinator without killing, i.e., $\phi\in {\rm BF}$ with zero killing term.
Note that $U(dz)=G^a(dz)$ is a potential kernel of (possibly killed) subordinator $S=X^a$, see \cite[(5.2)]{MR2978140}.
The Laplace exponent of $S$ equals $a+\phi$,
thus by \cite[Theorem 11.3, formulas (11.9) and Corollary 11.8]{MR2978140} we have
\begin{enumerate}
\item[]
\begin{enumerate}
\item under {\rm (S2)}, the measure $G^a(dz)$ is absolutely continuous with respect to the Lebesgue measure if and only if $\nu(0,\infty)=\infty$ ($X$ is non-Poisson) or $\gamma_0>0$,
\item under {\rm (S1)} and {\rm (S2)}, the density $G^a(z)$ of $G^a(dz)$ satisfies: $G^a(z)=0$ on $(-\infty,0]$, $G^a(z)$ is finite, positive and non-increasing on $(0,\infty)$, and $\lim_{z\to 0^+} G^a(z)=\infty$,
\item under {\rm (S2)} with $a=0$, $G^0(dz)$ tends to zero if and only if $\int_1^{\infty} x \nu(dx)=\infty$.
\end{enumerate}
\end{enumerate}
\end{remark}

We already know by Remark~\ref{sub} that $G^a$, $a>0$, describes $\KK(X)$ by \eqref{def:pK}. We extend this observation to $a=0$.
\begin{proposition}\label{prop:sub_G0}
Assume {\rm(S1)} and {\rm(S2)} with $a=0$. Then $\pK^0(X)=\pK(X)=\KK(X)$, that is $q\in\KK(X)$ if and only if
\begin{align*}
\lim_{r\to 0^+}\left[ \sup_{x\in\R}\int_0^{r} |q(z+x)|G^0(z)dz\right]=0\,.
\end{align*}
\end{proposition}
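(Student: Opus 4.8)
The plan is to invoke Theorem~\ref{prop:G_unb_b}, whose hypotheses are exactly tailored to the present situation where $G^0(dz)$ need not tend to zero at infinity. First I would record that the inclusions $\pK^0(X)\subseteq\pK(X)\subseteq\KK(X)$ hold for free: the first because $G^\lambda(dz)\le G^0(dz)$, and the equality $\pK(X)=\KK(X)$ is already granted by Remark~\ref{sub} under (S1). Thus it only remains to establish the reverse inclusion $\KK(X)\subseteq\pK^0(X)$, and this is precisely the conclusion of Theorem~\ref{prop:G_unb_b} once its two hypotheses are checked.

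Next I would verify those two hypotheses. For transience, note that under (S1) the process $X$ is a non-trivial (infinite-activity) subordinator, hence $X_t\to\infty$ as $t\to\infty$ and $X$ is transient, so the potential measure $G^0(dz)$ is locally finite. For the structure of $G^0$, I would quote Remark~\ref{special_to_zero}: under (S1) and (S2) with $a=0$ the measure $G^0(dz)$ is absolutely continuous with a density $G^0(z)$ that vanishes on $(-\infty,0]$, is finite, positive and non-increasing on $(0,\infty)$, and satisfies $\lim_{z\to0^+}G^0(z)=\infty$. Consequently $G^0(z)$ is unbounded (because of the blow-up at the origin) yet bounded on $\{|z|\ge r\}$ for every $r>0$, being $0$ on $(-\infty,-r]$ and at most $G^0(r)<\infty$ on $[r,\infty)$ by monotonicity. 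Both hypotheses of Theorem~\ref{prop:G_unb_b} are therefore met, and we conclude $\pK^0(X)=\pK(X)=\KK(X)$.

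Finally I would translate the membership $q\in\pK^0(X)$, that is \eqref{cond_G_infty^0}, into the asserted explicit form. Since $d=1$ we have $B(0,r)=(-r,r)$, and since $G^0(z)=0$ for $z\le0$ the integral collapses to
$$\int_{B(0,r)}|q(z+x)|\,G^0(dz)=\int_0^r|q(z+x)|\,G^0(z)\,dz,$$
which yields the stated criterion. I do not expect a genuine obstacle: the argument is an application of Theorem~\ref{prop:G_unb_b}, and the only points requiring care are the elementary verification of transience and the monotonicity bound for $G^0$ away from the origin, both supplied by (S1), (S2) and Remark~\ref{special_to_zero}. One could instead attempt Theorem~\ref{prop:G_to_zero}, but that route demands that $G^0(dz)$ tend to zero at infinity, which by Remark~\ref{special_to_zero} fails precisely when $\int_1^{\infty}x\,\nu(dx)<\infty$; this explains why Theorem~\ref{prop:G_unb_b} is the correct tool here.
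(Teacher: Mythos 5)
Your proposal is correct and follows essentially the same route as the paper, whose entire proof reads: $X$ is obviously transient and, by Remark~\ref{special_to_zero}, Theorem~\ref{prop:G_unb_b} applies. You merely spell out the details the paper leaves implicit (the verification that $G^0(z)$ is unbounded at the origin yet bounded on $\{|z|\ge r\}$, and the reduction of \eqref{cond_G_infty^0} to the one-sided integral $\int_0^r$), all of which are accurate.
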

\begin{proof}
Obviously $X$ is transient and by Remark \ref{special_to_zero} the result of Theorem \ref{prop:G_unb_b} applies.
\end{proof}

\begin{lemma}\label{lem:osz_Ga}
Assume {\rm(S1)}, {\rm(S2)} and {\rm(S3)} and let $a\geq 0$ be chosen according to {\rm(S2)}. Then the density $G^a(z)$ of $G^a(dz)$  satisfies
$$
G^a(z)\approx \frac{\phi'(z^{-1})}{z^2\phi^2(z^{-1})},\quad 0<z\leq 1.
$$
\end{lemma}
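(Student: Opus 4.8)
The plan is to transfer the problem to the Laplace transform of $G^a$ and to run a Tauberian-type argument in which monotonicity of $G^a$ yields the upper estimate directly, while the scaling hypothesis (S3) is exactly what is needed for the matching lower estimate. Throughout write $\Psi(s)=\phi'(s)/\phi(s)^2$, so that the claimed bound reads $G^a(z)\approx z^{-2}\Psi(z^{-1})$ for $0<z\le 1$. Since $X$ is a subordinator with Laplace exponent $\phi$ one has $\int_{[0,\infty)}e^{-sz}P_t(dz)=e^{-t\phi(s)}$, whence
\[
\int_{[0,\infty)}e^{-sz}\,G^a(dz)=\int_0^\infty e^{-at}e^{-t\phi(s)}\,dt=\frac{1}{a+\phi(s)}\,,\qquad s>0\,.
\]
By Remark~\ref{special_to_zero}, under (S1) and (S2) the measure $G^a(dz)=G^a(z)\,dz$ has a positive, finite, non-increasing density on $(0,\infty)$ with $G^a(0^+)=\infty$; since $\phi\in C^\infty(0,\infty)$ with $\phi'>0$, differentiating the identity above (legitimate for $s>0$) gives
\[
\int_0^\infty z\,e^{-sz}G^a(z)\,dz=-\frac{d}{ds}\frac{1}{a+\phi(s)}=\frac{\phi'(s)}{(a+\phi(s))^2}\,.
\]
As $z\,e^{-sz}$ concentrates near $z\approx 1/s$, this heuristically equals $G^a(1/s)/s^2$, which is the assertion with $z=1/s$ (using $a+\phi\approx\phi$ for large $s$). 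It therefore suffices to prove the comparison for $z$ near $0$: on a fixed $[w_0,1]$ both sides are continuous and bounded between positive constants, hence comparable there.

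For the upper bound, monotonicity of $G^a$ gives, for every $s>0$,
\[
\frac{\phi'(s)}{(a+\phi(s))^2}\ge G^a(1/s)\int_0^{1/s}z\,e^{-sz}\,dz=c_1\,\frac{G^a(1/s)}{s^2}\,,\qquad c_1=\int_0^1 u e^{-u}\,du>0\,,
\]
and since $a+\phi(s)\ge\phi(s)$ we conclude $G^a(1/s)\le c_1^{-1}s^2\Psi(s)$, that is $G^a(z)\le c_1^{-1}z^{-2}\Psi(z^{-1})$ for all $z>0$. No scaling is used here.

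The lower bound is the crux. I would split the last display at $z=\varepsilon/s$, with $\varepsilon\in(0,1)$ to be fixed. The tail is controlled by monotonicity,
\[
\int_{\varepsilon/s}^{\infty}z\,e^{-sz}G^a(z)\,dz\le G^a(\varepsilon/s)\int_0^\infty z e^{-sz}\,dz=\frac{G^a(\varepsilon/s)}{s^2}\,,
\]
while the singular part is estimated by the upper bound just proved, followed by the change of variables $r=1/z$ and $e^{-s/r}\le 1$,
\[
\int_0^{\varepsilon/s}z\,e^{-sz}G^a(z)\,dz\le c_1^{-1}\int_{s/\varepsilon}^{\infty}\frac{e^{-s/r}}{r}\Psi(r)\,dr\le c_1^{-1}\int_{s/\varepsilon}^{\infty}\frac{\Psi(r)}{r}\,dr\,.
\]
Here (S3) enters twice: for $\theta>\ut$ the weak upper scaling of $\Psi$ gives $\int_{\theta}^{\infty}\Psi(r)r^{-1}\,dr\le(\uC/\beta)\,\Psi(\theta)$ and $\Psi(s/\varepsilon)\le\uC\varepsilon^{\beta}\Psi(s)$, so the singular part is at most $(c_1^{-1}\uC^2/\beta)\,\varepsilon^{\beta}\Psi(s)$. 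On the other hand $\phi'(s)/(a+\phi(s))^2\ge\tfrac14\Psi(s)$ once $\phi(s)\ge a$. Choosing $\varepsilon$ (depending only on $c_1,\uC,\beta$) so small that $(c_1^{-1}\uC^2/\beta)\varepsilon^{\beta}\le\tfrac18$ forces the tail to exceed $\tfrac18\Psi(s)$, whence $G^a(\varepsilon/s)\ge\tfrac18 s^2\Psi(s)$. Writing $w=\varepsilon/s$ and applying the weak upper scaling once more, now in the form $\Psi(1/w)\le\uC\varepsilon^{\beta}\Psi(\varepsilon/w)$, I obtain $G^a(w)\ge c\,w^{-2}\Psi(w^{-1})$ for all sufficiently small $w$, with $c=\varepsilon^{\,2-\beta}/(8\uC)>0$.

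Combining the two bounds with the compactness remark of the first paragraph yields $G^a(z)\approx z^{-2}\Psi(z^{-1})$ on $(0,1]$. The main obstacle is precisely the lower bound: a split at $1/s$ is useless because the contribution of the singularity of $G^a$ near the origin is then comparable to the main term, so one is forced to cut at $\varepsilon/s$ with $\varepsilon$ small in order to demote the singular part below the principal term, and this is what necessitates the second use of (S3) to transport the resulting estimate from scale $\varepsilon/s$ back to scale $1/s$.
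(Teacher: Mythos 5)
Your proof is correct, and it is self-contained where the paper's is not. Both arguments start from the same identity --- the Laplace transform of $G^a(dz)$ is $1/(a+\phi)$ --- and both finish the same way, extending a small-$z$ comparison to all of $(0,1]$ by comparing the two sides on a fixed compact interval. The difference is the middle step: the paper never proves the Tauberian comparison itself. It observes that $\phi'/(a+\phi)^2\approx \phi'/\phi^2$ on $[1,\infty)$, transfers the scaling hypothesis (S3) to this derivative via \cite[Remark 3]{BGR1}, and then invokes \cite[Lemma 5]{BGR1} together with a version of \cite[Lemma 13]{BGR1} to conclude $G^a(z)\approx z^{-2}\phi'(z^{-1})/\phi^2(z^{-1})$ as $z\to 0^+$. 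You instead prove that comparison directly: monotonicity of $G^a$ alone yields the upper bound (valid for every $z>0$ with no scaling at all --- a point the citation-based route obscures), and the lower bound comes from splitting the transform of $zG^a(z)$ at $\varepsilon/s$, suppressing the singular part by the already-proved upper bound plus two applications of ${\rm WUSC}(-\beta,\ut,\uC)$, and then transporting the resulting estimate from scale $\varepsilon/s$ back to $1/s$ with a third application. In effect you reprove the needed case of the Tauberian lemma the paper cites; what this buys is independence from the machinery of \cite{BGR1} and an explicit accounting of exactly where (S3) enters, at the cost of length. One small repair: by Remark~\ref{special_to_zero} the density $G^a$ is non-increasing, positive and finite, but not necessarily continuous, so on $[w_0,1]$ you should justify comparability via $0<G^a(1)\leq G^a(z)\leq G^a(w_0)<\infty$ rather than continuity; the conclusion is unchanged, since the other side $z^{-2}\phi'(z^{-1})/\phi^2(z^{-1})$ is indeed continuous and positive there.
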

\begin{proof}
The Laplace transform of $G^a(z)$ is given by $\Phi=1/[a+\phi]$.
Note that
$$
\Phi ' = \frac{\phi '}{\phi^2} \left[\frac{\phi}{a +\phi} \right]^2 \approx \frac{\phi '}{\phi^2} \qquad \mbox{on}\qquad [1,\infty).
$$
Thus by \cite[Remark 3]{BGR1} $\Phi ' \in {\rm WUSC}(-\beta,\ut\vee 1,\uC/c)$, $c=[\phi(1)/[a+\phi(1)]]^2$.
Next, \cite[Lemma 5]{BGR1} and a version of Lemma 13 from  \cite{BGR1} imply $G^a(z)\approx z^{-2}\Phi'(z^{-1})\approx z^{-2} \phi'(z^{-1})/\phi^2(z^{-1})$ as $z\to 0^+$ (see also \cite[Proposition 3.4]{MR2928720}). The result extends to $z\in(0,1]$ by the  regularity of both sides of the estimate.
\end{proof}

Lemma~\ref{lem:osz_Ga},
Remark~\ref{sub} and Proposition~\ref{prop:sub_G0} imply  the following result.
\begin{proposition}
Let $X$ be a subordinator satisfying {\rm (S1)}, {\rm (S2)} and {\rm (S3)}. Then $q\in\KK(X)$ if and only if
\eqref{eq:sub_Kato} holds.
\end{proposition}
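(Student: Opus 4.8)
The plan is to combine the three cited ingredients and reduce everything to the two-sided estimate of Lemma~\ref{lem:osz_Ga}. First I would record that, since $X$ is a subordinator, for every $\lambda\geq 0$ the potential measure $G^{\lambda}(dz)$ is concentrated on $[0,\infty)$; hence for $x\in\R$ and $r>0$ the defining integral in \eqref{def:pK} (and likewise in \eqref{cond_G_infty^0}) collapses by translation invariance to $\int_0^r |q(x+z)|\,G^{\lambda}(dz)$, because $B(x,r)=(x-r,x+r)$ meets the support $[x,\infty)$ of $G^{\lambda}(x,\cdot)$ only in $[x,x+r)$. As we let $r\to 0^+$ we may assume $r\leq 1$ throughout.

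Next I would reduce $\KK(X)$ to a statement about $G^a$, where $a\geq 0$ is the number supplied by (S2). By Remark~\ref{sub}, hypothesis (S1) yields $\pK(X)=\KK(X)$, so $q\in\KK(X)$ if and only if the limit in \eqref{def:pK} holds for some (equivalently every) $\lambda>0$. If the choice $a>0$ is admissible in (S2), I would simply take $\lambda=a$; then, since $G^a(dz)=G^a(z)\,dz$ is absolutely continuous by Lemma~\ref{lem:osz_Ga}, $q\in\KK(X)$ is equivalent to
$$
\lim_{r\to 0^+}\sup_{x\in\R}\int_0^r |q(x+z)|\,G^a(z)\,dz=0.
$$
If instead $a=0$ is the value furnished by (S2), the $0$-potential lies outside the range $\lambda>0$ of Definition~\ref{def:Kato}, so here I would invoke Proposition~\ref{prop:sub_G0}: under (S1) and (S2) with $a=0$ one has $\pK^0(X)=\pK(X)=\KK(X)$, which gives the same displayed characterization with $G^0$ in place of $G^a$. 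In either case $q\in\KK(X)$ is equivalent to the vanishing of $\lim_{r\to0^+}\sup_{x\in\R}\int_0^r|q(x+z)|G^a(z)\,dz$.

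Finally I would substitute the two-sided comparison of Lemma~\ref{lem:osz_Ga}, namely $G^a(z)\approx \phi'(z^{-1})/[z^2\phi^2(z^{-1})]$ for $0<z\leq 1$. Since for $r\leq 1$ the integration variable runs through $(0,r)\subseteq(0,1]$, the two integrands are comparable with multiplicative constants independent of $x$ and $r$; hence the suprema over $x$ are comparable and the two limits as $r\to0^+$ vanish simultaneously. This identifies $q\in\KK(X)$ with \eqref{eq:sub_Kato}.

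The computations are routine; the one point that needs care is the case split $a>0$ versus $a=0$, since only the latter requires Proposition~\ref{prop:sub_G0} (the zero-potential is not covered by the $\lambda>0$ definition of $\pK(X)$). A secondary technical check is that both the reduction of $B(x,r)$ to $[0,r)$ and the comparison in Lemma~\ref{lem:osz_Ga} are uniform in $x$, which is exactly what lets the supremum over $x$ be interchanged with the comparison constants.
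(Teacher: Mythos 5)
Your proposal is correct and matches the paper's own (one-line) proof, which cites exactly your three ingredients: Remark~\ref{sub} together with the $\lambda$-consistency of $\pK(X)$ (Lemma~\ref{lem:consistency}) covering the case $a>0$, Proposition~\ref{prop:sub_G0} covering the case $a=0$, and the two-sided estimate of Lemma~\ref{lem:osz_Ga} on $(0,1]$. You merely make explicit the reduction of $B(x,r)$ to $[x,x+r)$ via the support of the subordinator's potential measure and the case split on $a$, both of which the paper leaves implicit.
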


\subsection{Examples}

We refer the reader to \cite{MR644024}, \cite{MR1054115}, \cite{MR1132313} and \cite{MR2345907} for basic examples of the Brownian motion, the relativistic  process,  symmetric $\alpha$-stable processes and relativistic $\alpha$-stable processes.
We proceed towards our examples.

\begin{example}\label{example:1}
Denote
$A_1=\{ 2^n\colon  n\in \ZZ \}$
and
$$
f(s)=\ind_{(0,1]}(s)\, s ^{-\alpha} + e^m \ind_{(1,\infty)}(s)\, e^{-m s^{\beta}} s^{-\delta}\,,\quad s>0\,,
$$
where $m>0$, $\beta\in(0,1]$, $\delta>0$ and $\alpha \in (0,2)$.
Define a L{\'e}vy measure in $\R$ as
\begin{align}\label{nu_disc}
\nu(dz)=\sum_{y\in A_1} f(|y|)\, \big( \delta_{y}(dz)+\delta_{-y}(dz)\big)\,.
\end{align}
Let $X$ be a L{\'e}vy process with $A=0$, $\gamma=0$ and (an infinite  symmetric) $\nu$ given by \eqref{nu_disc}.
Then $X$ is a recurrent process, $\psi(z)$ is a real valued function comparable with $|z|^2 \land |z|^{\alpha}$ (see \cite[Example 4]{2014arXiv1403.0912K} and \cite[Corollary 37.6]{MR1739520}). Further, if  $\alpha\in(1,2)$ Theorem \ref{thm:0regH0} applies and describes both $\pK(X)$ and $\KK(X)$. If now $\alpha\in (0,1]$ by Theorem \ref{prop:KatoGeneral} we obtain $\pK(X)=\KK(X)$.
By \cite[Theorem 2.5]{MR3139314} there are constants $c_1, c_2\in(0,1)$ such that $p(t,x)\geq c_1 t^{-1/\alpha}$ on $|x|\leq c_2\, t^{1/\alpha}$, $t\in (0,1]$. Then for some $c>0$
$$
\int_0^1 p(u,x)\,du\geq c\, H(|x|)\,,  \qquad |x|\leq  c_2/2\,.
$$
where
$$
H(r)=
\begin{cases}
 r^{\alpha-1}, & 0<\alpha<1, \\
 \ln(r^{-1}), & \alpha=1.
\end{cases}
$$
Moreover, by \cite[Example 4]{2014arXiv1403.0912K} there is $c_3>0$ so that $p(t,x)\leq c_3\, t^{-1/\alpha} (1\land t\,|x|^{-\alpha})$ on $|x|\leq 1$, $t\in(0,1]$.
Thus, if $\alpha\in (1/2,1]$, there exists a constant $c>0$ such that
$$
\int_0^1 p(u,x)\,du\leq c\,H(|x|)\,, \quad\qquad |x|\leq 1/2 \,.
$$
Finally, by Proposition \ref{pK_alternat} for $\alpha\in (1/2,1]$ we have $q\in\pK(X)=\KK(X)$ if and only if
$$
\lim_{r\to 0^+} \int_{B(x,r)} |q(z)| H(|z-x|)dz=0\,.
$$

We note that this considerations superficially resemble the results of \cite{MR2345907} (see especially \cite[Definition~3.2]{MR2345907}). We explain why \cite{MR2345907} cannot be applied in this example if $\alpha\leq 1$.
Let $f(t,x)$ be a function that is non-increasing on $x\in (0,1]$ for every fixed $t\in (0,1]$. If $p(t,x)\leq f(t,x)$ by the lower bound for $p$ and monotonicity of $f$ we have $f(t,x)\geq c_4 \, t^{-1/\alpha}(1 \land t\,2^{\alpha k})$, $x\in(2^{-k-1},2^{-k}]$. Then for $n(t)=(1/\alpha)\log_2(1/t)$ we obtain
$$
\int_0^1 f(t,x)dx \geq c_4\, t^{1-1/\alpha}\sum_{k=0}^{n(t)} 2^{(\alpha-1)k-1} \quad \xrightarrow {t\to 0^+} \infty\,, \quad {\rm if}\quad  \alpha \in(0,1].
$$
Finally, if the upper bound assumption \cite[(A2.3)]{MR2345907} holds, i.e., $p(t,x)\leq t^{-1/\beta} \varPhi_2(t^{-1/\beta}|x|)=f(t,x)$ for some $\beta>0$, we have
$$
\int_0^{t^{-1/\beta}} \varPhi_2(z)dz =\int_0^{1} f(t,x)dx \quad \xrightarrow {t\to 0^+} \infty\,, \quad {\rm if}\quad  \alpha \in(0,1]\,,
$$
which contradicts with the integrability assumption in \cite[(A2.3)]{MR2345907}.

In fact, we have $p(s,x)\leq c_3 \,t^{-1/\alpha} \varPhi_2 (t^{-1/\alpha}|x|)$ for $|x|\leq 1$, $t\in (0,1]$ with $\varPhi_2(r)=1\land r^{-\alpha}$, which is a precise estimate for $x\in A_1$ and $|x|\leq 1$, and the integrability condition for $\varPhi_2$ holds only if $\alpha\in (1,2)$
\end{example}

\begin{example}
Let $\psi(x,y)=|x|^2+iy$ that is $X_t=(B_t,t)$, where $B_t$ is the standard Brownian motion in $\Rd$ (see \cite[10.4 and Example 13.30]{MR0481057}).  We note that in this case the transition kernel is not absolutely continuous  but the potential kernel is. Then $q\in\KK(X)$
reads as
$$
\lim_{t\to 0^+}\sup_{x\in\Rd,\,y\in\R}\int_0^t\int_{\Rd} |q(z+x,u+y)|\, u^{-d/2}e^{-|z|^2/(4u)}dzdu=0\,,
$$
and by Corollary~\ref{cor:time} holds if and only if
$$
\lim_{r\to 0^+}\sup_{x\in\Rd,\,y\in\R}\int_0^r\int_{B(0,r)} |q(z+x,u+y)|\, u^{-d/2}e^{-|z|^2/(4u)}dzdu=0\,.
$$
\end{example}

Now we discuss in detail subordinators. Since functions $\phi$ presented below are unbounded CBF with zero drift term, see \cite[Chapter 16: No 2 and 59, Proposition 7.1]{MR2978140}, they satisfy (S1) and (S2). The assumption (S3) can be easily checked.
The first example covers the case of $\alpha$-stable subordinator, $\alpha\in (0,1)$, and the inverse Gaussian subordinator.
\begin{example}
Let $\phi(u)=\delta[(u+m)^{\alpha}-m^{\alpha}]$, $\delta>0$, $m\geq 0$, $\alpha\in (0,1)$.
Then $q\in\KK(X)$ if and only if
$$
\lim_{r\to 0^+}\sup_{x\in\R}\int_x^{x+r} |q(z)| (z-x)^{\alpha-1}\, dz=0\,.
$$
\end{example}
\begin{example}
Let $\phi(u)=\ln(1+u^{\alpha})$, where  $\alpha\in (0,1]$.  Then $q\in\KK(X)$ if and only if
$$
\lim_{r\to 0^+}\sup_{x\in\R}\int_x^{x+r} |q(z)| \frac{dz}{(z-x)\ln^2(z-x)}=0\,.
$$
\end{example}
\begin{example}
Let $\phi(u)=\dfrac{u}{\ln(1+u^{\alpha})}$, where  $\alpha\in (0,1)$.  Then $q\in\KK(X)$ if and only if
$$
\lim_{r\to 0^+}\sup_{x\in\R}\int_x^{x+r} |q(z)| |\ln (z-x)|dz=0\,.
$$
\end{example}

\section*{Acknowledgement}
We thank Krzysztof Bogdan, Kamil Kaleta, Mateusz Kwa{\'s}nicki, Moritz Kassmann and Micha{\l} Ryznar for discussions, many helpful comments and references.

\bibliographystyle{abbrv}

\end{document}